\begin{document}
\newcommand {\emptycomment}[1]{} 

\newcommand{\mysymbol}{pp-post-Lie\xspace}
\newcommand{\Mysymbol}{Pp-post-Lie\xspace}
\newcommand{\mysymbolnew}{invariant\xspace}
\newcommand{\mysymbolcph}{generalized Hessian\xspace}
\newcommand{\mysymbolcphp}{generalized pseudo-Hessian\xspace}
\newcommand{\mysymbolcphh}{ Hessian\xspace}
\newcommand{\mysymbolnewi}{invariant bilinear form\xspace}

\newcommand{\nc}{\newcommand}
\newcommand{\delete}[1]{}
\nc{\mfootnote}[1]{\footnote{#1}} 
\nc{\todo}[1]{\tred{To do:} #1}

\nc{\mlabel}[1]{\label{#1}}  
\nc{\mcite}[1]{\cite{#1}}  
\nc{\mref}[1]{\ref{#1}}  
\nc{\meqref}[1]{\eqref{#1}} 
\nc{\mbibitem}[1]{\bibitem{#1}} 

\delete{
\nc{\mlabel}[1]{\label{#1}  
{\hfill \hspace{1cm}{\bf{{\ }\hfill(#1)}}}}
\nc{\mcite}[1]{\cite{#1}{{\bf{{\ }(#1)}}}}  
\nc{\mref}[1]{\ref{#1}{{\bf{{\ }(#1)}}}}  
\nc{\meqref}[1]{\eqref{#1}{{\bf{{\ }(#1)}}}} 
\nc{\mbibitem}[1]{\bibitem[\bf #1]{#1}} 
}

\newtheorem{thm}{Theorem}[section]
\newtheorem{lem}[thm]{Lemma}
\newtheorem{cor}[thm]{Corollary}
\newtheorem{pro}[thm]{Proposition}
\newtheorem{conj}[thm]{Conjecture}
\theoremstyle{definition}
\newtheorem{defi}[thm]{Definition}
\newtheorem{ex}[thm]{Example}
\newtheorem{rmk}[thm]{Remark}
\newtheorem{pdef}[thm]{Proposition-Definition}
\newtheorem{condition}[thm]{Condition}

\renewcommand{\labelenumi}{{\rm(\alph{enumi})}}
\renewcommand{\theenumi}{\alph{enumi}}

\nc{\tred}[1]{\textcolor{red}{#1}}
\nc{\tblue}[1]{\textcolor{blue}{#1}}
\nc{\tgreen}[1]{\textcolor{green}{#1}}
\nc{\tpurple}[1]{\textcolor{purple}{#1}}
\nc{\btred}[1]{\textcolor{red}{\bf #1}}
\nc{\btblue}[1]{\textcolor{blue}{\bf #1}}
\nc{\btgreen}[1]{\textcolor{green}{\bf #1}}
\nc{\btpurple}[1]{\textcolor{purple}{\bf #1}}

\nc{\cm}[1]{\textcolor{red}{Chengming:#1}}
\nc{\li}[1]{\textcolor{blue}{#1}}
\nc{\lir}[1]{\textcolor{blue}{Li:#1}}
\nc{\dl}[1]{\textcolor{green}{Dilei:#1}}
\nc{\ldl}[1]{\textcolor{violet}{#1}}


\nc{\twovec}[2]{\left(\begin{array}{c} #1 \\ #2\end{array} \right )}
\nc{\threevec}[3]{\left(\begin{array}{c} #1 \\ #2 \\ #3 \end{array}\right )}
\nc{\twomatrix}[4]{\left(\begin{array}{cc} #1 & #2\\ #3 & #4 \end{array} \right)}
\nc{\threematrix}[9]{{\left(\begin{matrix} #1 & #2 & #3\\ #4 & #5 & #6 \\ #7 & #8 & #9 \end{matrix} \right)}}
\nc{\twodet}[4]{\left|\begin{array}{cc} #1 & #2\\ #3 & #4 \end{array} \right|}

\nc{\rk}{\mathrm{r}}
\newcommand{\g}{\mathfrak g}
\newcommand{\h}{\mathfrak h}
\newcommand{\pf}{\noindent{$Proof$.}\ }
\newcommand{\frkg}{\mathfrak g}
\newcommand{\frkh}{\mathfrak h}
\newcommand{\Id}{\rm{Id}}
\newcommand{\gl}{\mathfrak {gl}}
\newcommand{\ad}{\mathrm{ad}}
\newcommand{\add}{\frka\frkd}
\newcommand{\frka}{\mathfrak a}
\newcommand{\frkb}{\mathfrak b}
\newcommand{\frkc}{\mathfrak c}
\newcommand{\frkd}{\mathfrak d}
\newcommand {\comment}[1]{{\marginpar{*}\scriptsize\textbf{Comments:} #1}}

\nc{\tforall}{\text{ for all }}

\nc{\svec}[2]{{\tiny\left(\begin{matrix}#1\\
#2\end{matrix}\right)\,}}  
\nc{\ssvec}[2]{{\tiny\left(\begin{matrix}#1\\
#2\end{matrix}\right)\,}} 

\nc{\typeI}{local cocycle $3$-Lie bialgebra\xspace}
\nc{\typeIs}{local cocycle $3$-Lie bialgebras\xspace}
\nc{\typeII}{double construction $3$-Lie bialgebra\xspace}
\nc{\typeIIs}{double construction $3$-Lie bialgebras\xspace}

\nc{\bia}{{$\mathcal{P}$-bimodule ${\bf k}$-algebra}\xspace}
\nc{\bias}{{$\mathcal{P}$-bimodule ${\bf k}$-algebras}\xspace}

\nc{\rmi}{{\mathrm{I}}}
\nc{\rmii}{{\mathrm{II}}}
\nc{\rmiii}{{\mathrm{III}}}
\nc{\pr}{{\mathrm{pr}}}
\newcommand{\huaA}{\mathcal{A}}

\nc{\mcdot}{{}}

\nc{\OT}{constant $\theta$-}
\nc{\T}{$\theta$-}
\nc{\IT}{inverse $\theta$-}


\nc{\asi}{ASI\xspace}
\nc{\dualp}{transposed Poisson\xspace}
\nc{\Dualp}{Transposed Poisson\xspace}
\nc{\dualpop}{{\bf TPois}\xspace}
\nc{\ldualp}{derivation-transposed Poisson\xspace}

\nc{\spdualp}{sp-dual Poisson \xspace} \nc{\aybe}{AYBE\xspace}

\nc{\admset}{\{\pm x\}\cup K^{\times} x^{-1}}

\nc{\dualrep}{gives a dual representation\xspace}
\nc{\admt}{admissible to\xspace}

\nc{\ciri}{\circ_{\rm I}}
\nc{\cirii}{\circ_{\rm II}}
\nc{\ciriii}{\circ_{\rm III}}

\nc{\opa}{\cdot_A}
\nc{\opb}{\cdot_B}

\nc{\post}{positive type\xspace}
\nc{\negt}{negative type\xspace}
\nc{\invt}{inverse type\xspace}

\nc{\pll}{\beta}
\nc{\plc}{\epsilon}

\nc{\ass}{{\mathit{Ass}}}
\nc{\comm}{{\mathit{Comm}}}
\nc{\dend}{{\mathit{Dend}}}
\nc{\zinb}{{\mathit{Zinb}}}
\nc{\tdend}{{\mathit{TDend}}}
\nc{\prelie}{{\mathit{preLie}}}
\nc{\postlie}{{\mathit{PostLie}}}
\nc{\quado}{{\mathit{Quad}}}
\nc{\octo}{{\mathit{Octo}}}
\nc{\ldend}{{\mathit{ldend}}}
\nc{\lquad}{{\mathit{LQuad}}}

 \nc{\adec}{\check{;}} \nc{\aop}{\alpha}
\nc{\dftimes}{\widetilde{\otimes}} \nc{\dfl}{\succ} \nc{\dfr}{\prec}
\nc{\dfc}{\circ} \nc{\dfb}{\bullet} \nc{\dft}{\star}
\nc{\dfcf}{{\mathbf k}} \nc{\apr}{\ast} \nc{\spr}{\cdot}
\nc{\twopr}{\circ} \nc{\tspr}{\star} \nc{\sempr}{\ast}
\nc{\disp}[1]{\displaystyle{#1}}
\nc{\bin}[2]{ (_{\stackrel{\scs{#1}}{\scs{#2}}})}  
\nc{\binc}[2]{ \left (\!\! \begin{array}{c} \scs{#1}\\
    \scs{#2} \end{array}\!\! \right )}  
\nc{\bincc}[2]{  \left ( {\scs{#1} \atop
    \vspace{-.5cm}\scs{#2}} \right )}  
\nc{\sarray}[2]{\begin{array}{c}#1 \vspace{.1cm}\\ \hline
    \vspace{-.35cm} \\ #2 \end{array}}
\nc{\bs}{\bar{S}} \nc{\dcup}{\stackrel{\bullet}{\cup}}
\nc{\dbigcup}{\stackrel{\bullet}{\bigcup}} \nc{\etree}{\big |}
\nc{\la}{\longrightarrow} \nc{\fe}{\'{e}} \nc{\rar}{\rightarrow}
\nc{\dar}{\downarrow} \nc{\dap}[1]{\downarrow
\rlap{$\scriptstyle{#1}$}} \nc{\uap}[1]{\uparrow
\rlap{$\scriptstyle{#1}$}} \nc{\defeq}{\stackrel{\rm def}{=}}
\nc{\dis}[1]{\displaystyle{#1}} \nc{\dotcup}{\,
\displaystyle{\bigcup^\bullet}\ } \nc{\sdotcup}{\tiny{
\displaystyle{\bigcup^\bullet}\ }} \nc{\hcm}{\ \hat{,}\ }
\nc{\hcirc}{\hat{\circ}} \nc{\hts}{\hat{\shpr}}
\nc{\lts}{\stackrel{\leftarrow}{\shpr}}
\nc{\rts}{\stackrel{\rightarrow}{\shpr}} \nc{\lleft}{[}
\nc{\lright}{]} \nc{\uni}[1]{\tilde{#1}} \nc{\wor}[1]{\check{#1}}
\nc{\free}[1]{\bar{#1}} \nc{\den}[1]{\check{#1}} \nc{\lrpa}{\wr}
\nc{\curlyl}{\left \{ \begin{array}{c} {} \\ {} \end{array}
    \right .  \!\!\!\!\!\!\!}
\nc{\curlyr}{ \!\!\!\!\!\!\!
    \left . \begin{array}{c} {} \\ {} \end{array}
    \right \} }
\nc{\leaf}{\ell}       
\nc{\longmid}{\left | \begin{array}{c} {} \\ {} \end{array}
    \right . \!\!\!\!\!\!\!}
\nc{\ot}{\otimes} \nc{\sot}{{\scriptstyle{\ot}}}
\nc{\otm}{\overline{\ot}}
\nc{\ora}[1]{\stackrel{#1}{\rar}}
\nc{\ola}[1]{\stackrel{#1}{\la}}
\nc{\pltree}{\calt^\pl}
\nc{\epltree}{\calt^{\pl,\NC}}
\nc{\rbpltree}{\calt^r}
\nc{\scs}[1]{\scriptstyle{#1}} \nc{\mrm}[1]{{\rm #1}}
\nc{\dirlim}{\displaystyle{\lim_{\longrightarrow}}\,}
\nc{\invlim}{\displaystyle{\lim_{\longleftarrow}}\,}
\nc{\mvp}{\vspace{0.5cm}} \nc{\svp}{\vspace{2cm}}
\nc{\vp}{\vspace{8cm}} \nc{\proofbegin}{\noindent{\bf Proof: }}
\nc{\proofend}{$\blacksquare$ \vspace{0.5cm}}
\nc{\freerbpl}{{F^{\mathrm RBPL}}}
\nc{\sha}{{\mbox{\cyr X}}}  
\nc{\ncsha}{{\mbox{\cyr X}^{\mathrm NC}}} \nc{\ncshao}{{\mbox{\cyr
X}^{\mathrm NC,\,0}}}
\nc{\shpr}{\diamond}    
\nc{\shprm}{\overline{\diamond}}    
\nc{\shpro}{\diamond^0}    
\nc{\shprr}{\diamond^r}     
\nc{\shpra}{\overline{\diamond}^r}
\nc{\shpru}{\check{\diamond}} \nc{\catpr}{\diamond_l}
\nc{\rcatpr}{\diamond_r} \nc{\lapr}{\diamond_a}
\nc{\sqcupm}{\ot}
\nc{\lepr}{\diamond_e} \nc{\vep}{\varepsilon} \nc{\labs}{\mid\!}
\nc{\rabs}{\!\mid} \nc{\hsha}{\widehat{\sha}}
\nc{\lsha}{\stackrel{\leftarrow}{\sha}}
\nc{\rsha}{\stackrel{\rightarrow}{\sha}} \nc{\lc}{\lfloor}
\nc{\rc}{\rfloor}
\nc{\tpr}{\sqcup}
\nc{\nctpr}{\vee}
\nc{\plpr}{\star}
\nc{\rbplpr}{\bar{\plpr}}
\nc{\sqmon}[1]{\langle #1\rangle}
\nc{\forest}{\calf}
\nc{\altx}{\Lambda_X} \nc{\vecT}{\vec{T}} \nc{\onetree}{\bullet}
\nc{\Ao}{\check{A}}
\nc{\seta}{\underline{\Ao}}
\nc{\deltaa}{\overline{\delta}}
\nc{\trho}{\tilde{\rho}}

\nc{\rpr}{\circ}
\nc{\dpr}{{\tiny\diamond}}
\nc{\rprpm}{{\rpr}}

\nc{\mmbox}[1]{\mbox{\ #1\ }} \nc{\ann}{\mrm{ann}}
\nc{\Aut}{\mrm{Aut}} \nc{\can}{\mrm{can}}
\nc{\twoalg}{{two-sided algebra}\xspace}
\nc{\colim}{\mrm{colim}}
\nc{\Cont}{\mrm{Cont}} \nc{\rchar}{\mrm{char}}
\nc{\cok}{\mrm{coker}} \nc{\dtf}{{R-{\rm tf}}} \nc{\dtor}{{R-{\rm
tor}}}
\renewcommand{\det}{\mrm{det}}
\nc{\depth}{{\mrm d}}
\nc{\Div}{{\mrm Div}} \nc{\End}{\mrm{End}} \nc{\Ext}{\mrm{Ext}}
\nc{\Fil}{\mrm{Fil}} \nc{\Frob}{\mrm{Frob}} \nc{\Gal}{\mrm{Gal}}
\nc{\GL}{\mrm{GL}} \nc{\Hom}{\mrm{Hom}} \nc{\hsr}{\mrm{H}}
\nc{\hpol}{\mrm{HP}} \nc{\id}{\mrm{id}} \nc{\im}{\mrm{im}}
\nc{\incl}{\mrm{incl}} \nc{\length}{\mrm{length}}
\nc{\LR}{\mrm{LR}} \nc{\mchar}{\rm char} \nc{\NC}{\mrm{NC}}
\nc{\mpart}{\mrm{part}} \nc{\pl}{\mrm{PL}}
\nc{\ql}{{\QQ_\ell}} \nc{\qp}{{\QQ_p}}
\nc{\rank}{\mrm{rank}} \nc{\rba}{\rm{RBA }} \nc{\rbas}{\rm{RBAs }}
\nc{\rbpl}{\mrm{RBPL}}
\nc{\rbw}{\rm{RBW }} \nc{\rbws}{\rm{RBWs }} \nc{\rcot}{\mrm{cot}}
\nc{\rest}{\rm{controlled}\xspace}
\nc{\rdef}{\mrm{def}} \nc{\rdiv}{{\rm div}} \nc{\rtf}{{\rm tf}}
\nc{\rtor}{{\rm tor}} \nc{\res}{\mrm{res}} \nc{\SL}{\mrm{SL}}
\nc{\Spec}{\mrm{Spec}} \nc{\tor}{\mrm{tor}} \nc{\Tr}{\mrm{Tr}}
\nc{\mtr}{\mrm{sk}}

\nc{\ab}{\mathbf{Ab}} \nc{\Alg}{\mathbf{Alg}}
\nc{\Algo}{\mathbf{Alg}^0} \nc{\Bax}{\mathbf{Bax}}
\nc{\Baxo}{\mathbf{Bax}^0} \nc{\RB}{\mathbf{RB}}
\nc{\RBo}{\mathbf{RB}^0} \nc{\BRB}{\mathbf{RB}}
\nc{\Dend}{\mathbf{DD}} \nc{\bfk}{{K}} \nc{\bfone}{{\bf 1}}
\nc{\base}[1]{{a_{#1}}} \nc{\detail}{\marginpar{\bf More detail}
    \noindent{\bf Need more detail!}
    \svp}
\nc{\Diff}{\mathbf{Diff}} \nc{\gap}{\marginpar{\bf
Incomplete}\noindent{\bf Incomplete!!}
    \svp}
\nc{\FMod}{\mathbf{FMod}} \nc{\mset}{\mathbf{MSet}}
\nc{\rb}{\mathrm{RB}} \nc{\Int}{\mathbf{Int}}
\nc{\Mon}{\mathbf{Mon}}
\nc{\remarks}{\noindent{\bf Remarks: }}
\nc{\OS}{\mathbf{OS}} 
\nc{\Rep}{\mathbf{Rep}}
\nc{\Rings}{\mathbf{Rings}} \nc{\Sets}{\mathbf{Sets}}
\nc{\DT}{\mathbf{DT}}

\nc{\BA}{{\mathbb A}} \nc{\CC}{{\mathbb C}} \nc{\DD}{{\mathbb D}}
\nc{\EE}{{\mathbb E}} \nc{\FF}{{\mathbb F}} \nc{\GG}{{\mathbb G}}
\nc{\HH}{{\mathbb H}} \nc{\LL}{{\mathbb L}} \nc{\NN}{{\mathbb N}}
\nc{\QQ}{{\mathbb Q}} \nc{\RR}{{\mathbb R}} \nc{\BS}{{\mathbb{S}}} \nc{\TT}{{\mathbb T}}
\nc{\VV}{{\mathbb V}} \nc{\ZZ}{{\mathbb Z}}


\nc{\calao}{{\mathcal A}} \nc{\cala}{{\mathcal A}}
\nc{\calc}{{\mathcal C}} \nc{\cald}{{\mathcal D}}
\nc{\cale}{{\mathcal E}} \nc{\calf}{{\mathcal F}}
\nc{\calfr}{{{\mathcal F}^{\,r}}} \nc{\calfo}{{\mathcal F}^0}
\nc{\calfro}{{\mathcal F}^{\,r,0}} \nc{\oF}{\overline{F}}
\nc{\calg}{{\mathcal G}} \nc{\calh}{{\mathcal H}}
\nc{\cali}{{\mathcal I}} \nc{\calj}{{\mathcal J}}
\nc{\call}{{\mathcal L}} \nc{\calm}{{\mathcal M}}
\nc{\caln}{{\mathcal N}} \nc{\calo}{{\mathcal O}}
\nc{\calp}{{\mathcal P}} \nc{\calq}{{\mathcal Q}} \nc{\calr}{{\mathcal R}}
\nc{\calt}{{\mathcal T}} \nc{\caltr}{{\mathcal T}^{\,r}}
\nc{\calu}{{\mathcal U}} \nc{\calv}{{\mathcal V}}
\nc{\calw}{{\mathcal W}} \nc{\calx}{{\mathcal X}}
\nc{\CA}{\mathcal{A}}

\nc{\vsa}{\vspace{-.1cm}}
\nc{\vsb}{\vspace{-.2cm}}
\nc{\vsc}{\vspace{-.3cm}}
\nc{\vsd}{\vspace{-.4cm}}
\nc{\vse}{\vspace{-.5cm}}

\nc{\fraka}{{\mathfrak a}} \nc{\frakB}{{\mathfrak B}}
\nc{\frakb}{{\mathfrak b}} \nc{\frakd}{{\mathfrak d}}
\nc{\oD}{\overline{D}}
\nc{\frakF}{{\mathfrak F}} \nc{\frakg}{{\mathfrak g}}
\nc{\frakm}{{\mathfrak m}} \nc{\frakM}{{\mathfrak M}}
\nc{\frakMo}{{\mathfrak M}^0} \nc{\frakp}{{\mathfrak p}}
\nc{\frakS}{{\mathfrak S}} \nc{\frakSo}{{\mathfrak S}^0}
\nc{\fraks}{{\mathfrak s}} \nc{\os}{\overline{\fraks}}
\nc{\frakT}{{\mathfrak T}}
\nc{\oT}{\overline{T}}
\nc{\frakX}{{\mathfrak X}} \nc{\frakXo}{{\mathfrak X}^0}
\nc{\frakx}{{\mathbf x}}
\nc{\frakTx}{\frakT}      
\nc{\frakTa}{\frakT^a}        
\nc{\frakTxo}{\frakTx^0}   
\nc{\caltao}{\calt^{a,0}}   
\nc{\ox}{\overline{\frakx}} \nc{\fraky}{{\mathfrak y}}
\nc{\frakz}{{\mathfrak z}} \nc{\oX}{\overline{X}}

\font\cyr=wncyr10

\nc{\al}{\alpha}
\nc{\lam}{\lambda}
\nc{\lr}{\longrightarrow}

\nc{\dpdop}{dual p-$\mathcal{O}$-operator\xspace}
\nc{\dpdops}{dual p-$\mathcal{O}$-operators\xspace}
\nc{\pdsCYBE}{pp-post-classical Yang-Baxter equation\xspace}
\nc{\PCYBE}{PPP-CYBE\xspace}

\nc{\gphpl}{\mysymbolcphp post-Lie\xspace}


\title[Bialgebra of post-Lie algebras via Manin triples and generalized Hessian Lie groups]{
A bialgebra theory of post-Lie algebras via Manin triples and
generalized Hessian Lie groups}

\author{Dilei Lu}
\address{Chern Institute of Mathematics \& LPMC, Nankai University, Tianjin 300071, China}
\email{ludyray@126.com}

\author{Chengming Bai}
\address{Chern Institute of Mathematics \& LPMC, Nankai University, Tianjin 300071, China}
\email{baicm@nankai.edu.cn}

\author{Li Guo}
\address{Department of Mathematics and Computer Science, Rutgers University, Newark, NJ 07102, USA}
\email{liguo@rutgers.edu}


\begin{abstract}
We develop a bialgebra theory of post-Lie algebras that can be
characterized by Manin triples of post-Lie algebras
associated to a bilinear form satisfying certain invariant
conditions. In the absence of dual representations for adjoint representations of post-Lie algebras, we utilize the geometric interpretation of post-Lie algebras to find the desired invariant condition, by generalizing pseudo-Hessian Lie groups to allow constant torsion for the flat connection.
The resulting notion is a generalized pseudo-Hessian post-Lie algebra, which is a post-Lie algebra equipped with a nondegenerate symmetric invariant bilinear form.
Moreover, generalized pseudo-Hessian post-Lie
algebras are also naturally obtained from quadratic Rota-Baxter
Lie algebras of weight one.
On the other hand, the notion of partial-pre-post-Lie algebra (pp-post-Lie algebras) is introduced as the algebraic structure
underlying generalized pseudo-Hessian post-Lie algebras, by splitting one of the two binary operations of post-Lie algebras. The notion of pp-post-Lie bialgebras is introduced as the equivalent structure of
Manin triples of post-Lie algebras associated to a nondegenerate symmetric invariant bilinear form, thereby establishing a bialgebra theory for post-Lie algebras via the Manin triple approach. We also study the related analogs of the classical Yang-Baxter equation, $\mathcal O$-operators and successors for pp-post-Lie algebras.
In particular, there is a construction of pp-post-Lie bialgebras from
the successors of pp-post-Lie algebras.
\end{abstract}

\subjclass[2020]{\!\!\!\!
17D25,  
17B62,  
22E60,  
17B38,  
58D17,	
53C05,  
16T10,	
17A30,  
17A36 
}

\keywords{Post-Lie algebra, Hessian Lie group, Manin triple, bialgebra, classical Yang-Baxter equation, $\mathcal{O}$-operator, pre-Lie algebra}

\maketitle

\vspace{-1.3cm}

\tableofcontents

\allowdisplaybreaks

\section{Introduction}
This paper develops a bialgebra theory from Manin triples of
post-Lie algebras, accomplished by an invariant bilinear form from
generalized pseudo-Hessian Lie groups and \mysymbol algebras
from partial splitting of operations of post-Lie algebras.

\subsection{Bialgebra theories}
A bialgebra structure consists of an algebra structure and a
coalgebra structure coupled by certain compatibility conditions.
In the case of associative algebras, one of the common
compatibility conditions is that the comultiplication is an
algebra homomorphism, giving the usual notion of an
(associative) bialgebra and a Hopf algebra~\mcite{Sw}; another one
is that the comultiplication behaves like a derivation, leading to
the infinitesimal bialgebra of Joni and Rota~\mcite{JR}. In the
case of Lie algebras, the latter compatibility gives the Lie
bialgebra of Drinfeld~\mcite{CP,Dr}.
In addition to its
broad applications~\mcite{CP,Dr,Dr1}, Lie bialgebra has shown its
significance by the close connections with the important notions
of Manin triples of Lie algebras, the classical Yang-Baxter
equation (CYBE), also called the classical $r$-matrices,
$\calo$-operators (that is, relative Rota-Baxter operators) on Lie algebras, and pre-Lie algebras~\mcite{B3,CP,Ku1,STS}.
These connections are depicted in the diagram
\begin{equation}
        \xymatrix{
            \text{\small pre-Lie}\atop\text{\small  algebras} \ar[r]     & \mathcal{O}\text{\small -operators on}\atop\text{\small Lie algebras}\ar[r] &
            \text{\small solutions of}\atop \text{\small CYBE} \ar[r]  & \text{\small Lie}\atop \text{\small  bialgebras}  \ar@{<->}[r] &
            \text{\small Manin triples of} \atop \text{\small Lie algebras} }
    \mlabel{eq:bigliediag}
\end{equation}
Especially, the Lie bialgebra is characterized by a Manin triple
of Lie algebras associated to a bilinear form satisfying an
invariant condition.

After the establishment of a theory for the infinitesimal bialgebra along the line of \meqref{eq:bigliediag}~\mcite{A1,A2,B0}, a similar approach has been taken to establish the bialgebra theories for quite a few other algebraic structures over the past few years, such as pre-Lie algebras \mcite{B1}, dendriform algebras \mcite{B0}, Leibniz
algebras \mcite{TS}, L-dendriform algebras~\mcite{BHC}, perm algebras \mcite{BYZ}, 3-Lie algebras \mcite{BGS} and Rota-Baxter associative and Lie algebras~\mcite{BGLM,BGM}.

Such a bialgebra theory, once established, is not only rich in
theory with related structures similar to the diagram in
\eqref{eq:bigliediag}. It often plays important roles in many
areas and have connections with other structures arising from
mathematics and physics. For example, Lie bialgebras are the infinitesimal versions of Poisson-Lie groups, and
are crucial in the study of quantum groups
\mcite{CP,Dr}. Pre-Lie bialgebras correspond
to the underlying algebraic structure of para-K\"ahler Lie groups
\mcite{B1}. Novikov bialgebras are used
to construct infinite-dimensional Lie bialgebras \mcite{HBG}.

\subsection{The Manin triple approach}
Other than the associative bialgebra, the rest of the above bialgebras are obtained by the so-called Manin triple approach.
Roughly speaking, for a given algebraic structure, to establish its bialgebra theory with this approach, one constructs triples $(A,A^*,A\oplus A^*)$ with each component equipped with the given algebraic structure and with a bilinear form $\mathcal{B}$ defined on $A\oplus A^*$ satisfying certain ``invariant" conditions. Thus, 
\begin{enumerate}
    \item the algebraic structure on $A$ gives the algebraic component of the desired bialgebra theory,
    \item the algebraic structure on $A^*$, by duality, gives the coalgebra component of the desired bialgebra theory, and most essentially
    \item the interaction of $A$ and $A^*$ in $A\oplus A^*$ through the bilinear form $\mathcal{B}$ gives the compatibility conditions of the desired bialgebra theory.
\end{enumerate}

However, despite its general name, there is no uniform procedure to follow in the Manin triple approach. The critical issue is finding the invariant condition for a bilinear form $\mathcal{B}$ on $A\oplus A^*$ and this has to be done on a case-by-case basis.
The following remarks illustrate the challenges that can be encountered in this approach.

\begin{enumerate}
\item Usually, if there is a
suitable notion of dual representations for a given algebraic
structure, then it can be used to obtain an invariant bilinear
form. This important property of an algebraic structure is
extracted as the {\it properness} in the general discussion
in~\mcite{Ku}. In the absence of the properness property, finding a bilinear form with a suitable invariant
condition becomes difficult. This is the case for transpose Poisson
algebras~\mcite{LB}.
\item Even with a suitable invariant condition of the bilinear
form in place for the Manin triples of a given algebraic
structure, the resulting bialgebra might not be realizable within
the original algebraic structure. For example, a Manin triple of
Lie algebras associated to a (skew-symmetric) symplectic form
gives rise to a pre-Lie
bialgebra~\mcite{B1}, and a Manin triple of pre-Lie algebras
associated to a nondegenerate symmetric 2-cocycle
defines the bialgebra theory of the
L-dendriform algebra~\mcite{NB}.
We note for later relevance that the L-dendriform algebra is obtained from splitting the operation of the pre-Lie algebra.
In the context of the
general procedure of splitting an operad~\mcite{BBGN}, the L-dendriform algebra
is the successor of the pre-Lie algebra, which in turn is the
successor of the Lie algebra.
\end{enumerate}
\vsc

\subsection{The Manin triple approach to a bialgebra theory for post-Lie algebras}

The notion of post-Lie algebras was introduced  by Vallette in
connection with homology of partition posets and the Koszul duality of operads \mcite{Va}. They attract more attentions
in the study of numerical integration on Lie groups and manifolds
\mcite{MS,MW}. In a differential geometric context, a post-Lie
algebra  appears naturally as the algebraic structure of the flat
connection with constant torsion  on a Lie group \mcite{ML}.
Moreover, post-Lie algebras also showed up in numerous other topics in pure and applied mathematics such as
modified classical Yang-Baxter equation \mcite{BGN},
Hopf algebras and factorization theorems  \mcite{EMM}, Rota-Baxter
operators \mcite{GLBJ}, Poincar\'e-Birkhoff-Witt theorems
\mcite{Do} and  regularity structures in stochastic
partial differential equations \mcite{BK}. 

The goal of the present paper is to develop a bialgebra theory for post-Lie algebras via the Main triple approach.
Here we will need to resolve both of the challenges mentioned above.

\subsubsection{Finding the invariant bilinear form from generalized pseudo-Hessian Lie groups}
First we need to find a suitable invariant bilinear form in order to define Manin triples of post-Lie algebras.
Unfortunately, no desired dual representations have been found for the
adjoint representation of a post-Lie algebra.

Instead we obtained the needed invariant bilinear form from the
(pseudo-)metrics in Riemannian
geometry, by generalizing pseudo-Hessian Lie groups to allow the
torsion of the connection to be constant instead of zero,
motivated by the aforementioned Riemannian geometric
interpretation of post-Lie algebras and certain geometric
structures. Explicitly, on the one hand, a post-Lie algebra
corresponds to a Lie group with a left-invariant connection which
is flat and has constant torsion. On the other hand, a
pseudo-Hessian Lie group is a Lie group with a flat and
torsion-free left-invariant connection such that it is compatible
with a left-invariant pseudo-Riemannian metric in the sense that
the  Codazzi equation is satisfied. Combining the two
structures in a natural way leads us to the notion of a
generalized pseudo-Hessian Lie group.
As the algebraic structure corresponding to a generalized
pseudo-Hessian Lie group, the notion of a \mysymbolcphp post-Lie
algebra is introduced, which is a post-Lie algebra equipped with a
nondegenerate symmetric bilinear form satisfying an invariant
condition.
Quite
remarkably, this invariant condition is exactly what we need,
corresponding to the Codazzi equation and the invariant condition
associated to the Lie bracket decided by the constant torsion. It
is also exactly the condition satisfied by the flat left-invariant
connection and the left-invariant pseudo-Riemannian metric on the
generalized pseudo-Hessian Lie group.

There is another significance of \mysymbolcphp post-Lie
algebras. It is known that a Rota-Baxter Lie algebra of weight one
induces a post-Lie algebra \mcite{BGN}. If the Rota-Baxter Lie algebra is quadratic in the sense that the Lie algebra has a nondegenerate symmetric
invariant bilinear form, then the induced post-Lie algebra is also \mysymbolcphp in the above sense.
\vsb

\subsubsection{Finding the underlying algebraic structure for the bialgebra by partially splitting}
The second challenge is also present in our development of a
bialgebra theory of post-Lie algebras. More precisely, the
bialgebra structure arising from Manin triples of post-Lie algebras
associated to the above-found invariant bilinear form cannot be realized in terms of post-Lie algebras. To find the right structure, we draw inspiration from the fact that the Manin triple of pre-Lie algebras associated
to a nondegenerate symmetric 2-cocycle gives rise to the bialgebra theory of L-dendriform algebras~\mcite{NB}, which is the splitting structure (successor) of pre-Lie algebras~\mcite{BBGN}.

Thus we speculated that Manin triples of post-Lie algebras should give the bialgebra theory of a splitting structure (successor) of post-Lie algebras, arriving at the notion of partial-pre-post-Lie algebras (\mysymbol algebras). An interesting twist in this search is that \mysymbol algebras present a new kind of
splitting of operations: rather than splitting both operations of the post-Lie algebra $(A, \circ,[-,-])$ as is usually the case~\mcite{BBGN}, the \mysymbol algebra is obtained from only splitting the operation $\circ$ into the
sum of two operations $\rhd$ and $\lhd$, without changing the Lie bracket $[-,-]$. Furthermore, the
characterization of \mysymbol algebras is given in terms of
post-Lie algebras with the specific representations on the dual
spaces instead of the representations on the underlying vector
spaces of the post-Lie algebras themselves.

\vsb

\subsubsection{\Mysymbol bialgebras, their CYBE and pre-structures}

With both of the challenges under control, we are able to define the notion of \mysymbol bialgebras that can be equivalently characterized by Manin triples of post-Lie
algebras associated to  the invariant bilinear forms.
We also give an analog of the theory for
classical Yang-Baxter equation (CYBE) in Lie algebras to construct
\mysymbol bialgebras. Explicitly, the study of a special class of
\mysymbol bialgebras leads to the introduction of the \pdsCYBE
(\PCYBE) in a  \mysymbol algebra whose antisymmetric solutions
give \mysymbol bialgebras. The \PCYBE is interpreted in terms of
operator forms and hence the notions of $\mathcal{O}$-operators on
\mysymbol algebras and pre-\mysymbol algebras are introduced to
provide antisymmetric solutions of the \PCYBE in certain bigger
\mysymbol algebras. These connections can be illustrated by the following
diagram in analog to \eqref{eq:bigliediag}.
\vsc
\begin{small}
    \begin{equation*}
\begin{gathered}
    \xymatrix@C=1.1cm{
  \txt{pre-\mysymbol\\  algebras}\ar@{->}[r]  & \txt{$\mathcal{O}$-operators on\\ \mysymbol \\ algebras}\ar@{->}[r]  &  \txt{antisymmetric \\ solutions of  \\ the  \PCYBE }   \ar[r]     & \txt{\mysymbol\\ bialgebras} \
  \ar@{<->}[r] & \txt{Manin triples \\of  post-Lie\\ algebras}
}
\end{gathered}
\end{equation*}
 \end{small}
In particular, there is a construction of \mysymbol bialgebras
from pre-\mysymbol algebras. Also note that the operad of
pre-\mysymbol algebras is the successor of the operad of \mysymbol
algebras.

\vsb
\subsection{Outline of the paper}
The paper is organized as follows.

In Section \mref{s:quadpl}, after recalling the geometric
interpretation of post-Lie algebras, we introduce the notion of a
generalized (pseudo)-Hessian Lie group, by generalizing the
torsion-free condition for the flat connection for a
(pseudo)-Hessian Lie group to the constant torsion condition
(Definition~\mref{d:ghlg}). As its corresponding algebraic
structure, the notion of a \mysymbolcphp post-Lie algebra is
introduced (Definition~\mref{d:gphpl}). Finally, refining the
construction of post-Lie algebras from Rota-Baxter Lie algebras of
weight one, there is a natural construction of \mysymbolcphp
post-Lie algebras  from Rota-Baxter Lie algebras of weight one
with a nondegenerate symmetric invariant bilinear form (Proposition~\mref{iblf}).

In Section \mref{s:ldla},  we first introduce the notion of partial-pre-post-Lie algebras (\mysymbol algebras) (Definition~\mref{ldldef}) and then give some
basic properties. They are characterized in terms of
representations of post-Lie algebras, illustrating a ``partial"
splitting of operations of post-Lie algebras
(Proposition~\ref{dpsplrep}). Such a
characterization is further interpreted in terms of an analog of
$\mathcal O$-operators, namely, \dpdops (Theorem~\mref{thm:dual
2}). As a consequence,
 \mysymbol algebras  are obtained from   \mysymbolcphp post-Lie algebras (Proposition~\mref{compaDPSPL1}).

In Section \mref{s:ldlb},
we introduce the notions of a Manin triple of post-Lie algebras
associated to the \mysymbolnew bilinear form
(Definition~\mref{d:mtpl}) and a \mysymbol bialgebra
(Definition~\mref{d:ppplbialg}). The equivalence between these two
notions is interpreted in terms of certain matched pairs of
post-Lie algebras. The study of a special class of \mysymbol
bialgebras leads to the introduction of the \PCYBE in a  \mysymbol
algebra whose antisymmetric solutions give \mysymbol bialgebras
(Corollary~\mref{iartoldlbia}). The \PCYBE is given an operator
form in terms of $\mathcal{O}$-operators on \mysymbol algebras
(Corollary \mref{LDCYBEO}), and the construction of skew-symmetric
solutions of \PCYBE is given in terms of $\mathcal{O}$-operators
on \mysymbol algebras (Theorem~\mref{oggs}). Finally,
pre-\mysymbol algebras are introduced to provide antisymmetric
solutions of the \PCYBE in a bigger \mysymbol algebra
(Corollary~\mref{pretoaldlcYBE}).

\smallskip
\noindent
{\bf Notations}. Unless otherwise specified, all the vector spaces
and algebras are finite-dimensional over a field  $\mathbb{F}$  of
characteristic $0$, although many results and notions remain
valid in the infinite-dimensional case. For a vector space $V$,
let
\vsa
$$\tau:V\otimes V\rightarrow V\otimes V,\quad u\otimes v\mapsto v\otimes u,\;\;\;u,v\in V,
\vsb
$$
be the flip operator. Let $A$ be a vector space with a binary
operation $\circ$. Define linear maps $L_{\circ},
R_{\circ}:A\rightarrow {\rm End}_{\mathbb F}(A)$ respectively by
\vsa
\begin{eqnarray*}
L_{\circ}(a)b:=a\circ b,\;\; R_{\circ}(a)b:=b\circ a, \;\;\;a,
b\in A.
\vsb
\end{eqnarray*}
In particular, when $(A,\circ:=[-,-])$ is a Lie algebra, we use the
usual notion of the adjoint operator $\ad(a)(b):=[a,b]$ for all
$a,b\in A$.
\vsc
\section{Generalized pseudo-Hessian  post-Lie algebras} \mlabel{s:quadpl}

We begin by recalling the notion of post-Lie algebras and their geometric interpretation. We then introduce the notion of a generalized (pseudo-)Hessian Lie group
as a generalization of a (pseudo-)Hessian Lie group to give the \mysymbolcphp post-Lie algebra as the corresponding algebraic structure. Finally, the construction of post-Lie algebras from Rota-Baxter Lie algebras of weight one is enriched to a natural construction of
\mysymbolcphp post-Lie algebras  from Rota-Baxter Lie algebras of
weight one with a nondegenerate symmetric invariant bilinear form.
\vsb

\subsection{Post-Lie algebras and their geometric interpretation}\

We first recall the notion of post-Lie algebras and some basic
facts.
\vsa
\begin{defi}\mcite{Va}\mlabel{def1}
A \textbf{post-Lie algebra} is a triple $(A,\circ,[-,-])$ where $(A,[-,-])$ is a Lie algebra  and  $\circ: A \otimes A
\rightarrow A$ is a binary operation satisfying the following
equalities:
\vsb
\begin{align}
x \circ[y, z] &= [x \circ y, z]+[y, x \circ z],\mlabel{pl1}\\
 (x \circ y- y \circ x +[x, y]) \circ z &= x \circ(y \circ z)-y \circ(x \circ z), \quad \forall x, y,z \in A.\mlabel{pl2}
\end{align}
\end{defi}
\vsd
\begin{rmk}\begin{enumerate}
\item Recall \mcite{Bu} that a {\bf pre-Lie algebra} is a pair
$(A,\circ)$ in which $A$ is a vector space and $\circ: A \otimes A
\rightarrow A$ is a binary operation satisfying the following
equation:
\vsb
\begin{align}
(x \circ y) \circ z -x \circ (y \circ z) = (y \circ x) \circ z - y
\circ (x \circ z),\;\;\forall x,y,z\in A.
\end{align}
Obviously,  $(A,\circ)$ is a pre-Lie algebra if and only if
$(A,\circ,[-,-]=0)$ is a post-Lie algebra.
 \item   Let $(\mathfrak{g}, [-,-])$ be a
Lie algebra. Then both $(\mathfrak{g},\circ=0,[-,-])$ and
$(\mathfrak{g}, [-,-]^{\mathrm{op}}, [-,-])$ are post-Lie
algebras, where $[x,y]^{\mathrm{op}} = [y,x]$ for all $x,y \in
\mathfrak{g}$.
\end{enumerate}
\end{rmk}

\begin{pro}\mlabel{newLie} \mcite{Va}
Let  $(A, \circ,[-,-])$  be a post-Lie algebra. The binary
operation
\vsa
\begin{equation}
\{x, y\}:=x \circ y-y \circ x+[x, y], \quad \forall x, y \in A,
\mlabel{eq5}
\vsb
\end{equation}
defines a Lie algebra $(\mathfrak{g}(A),\{-,-\})$, called
the \textbf{sub-adjacent Lie algebra} of $(A, \circ,[-,-])$, and
$(A, \circ,[-,-])$ is also called a  \textbf{compatible post-Lie
algebra} structure on the Lie algebra $(\mathfrak{g}(A),\{-,-\})$.
\end{pro}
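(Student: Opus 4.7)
The plan is to verify the two defining axioms of a Lie algebra for $\{-,-\}$: antisymmetry and the Jacobi identity. Antisymmetry is immediate: since $[-,-]$ is antisymmetric, we compute $\{y,x\} = y\circ x - x\circ y + [y,x] = -(x\circ y - y\circ x) - [x,y] = -\{x,y\}$, so nothing beyond the antisymmetry of $[-,-]$ is needed.

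The real content is the Jacobi identity, which I would prove by direct expansion of the cyclic sum $\{\{x,y\},z\} + \{\{y,z\},x\} + \{\{z,x\},y\}$. First I would expand $\{\{x,y\},z\} = \{x,y\}\circ z - z\circ\{x,y\} + [\{x,y\},z]$ and then simplify each piece using the two axioms of a post-Lie algebra. Axiom \eqref{pl2} collapses the first piece directly into $\{x,y\}\circ z = x\circ(y\circ z) - y\circ(x\circ z)$, which is the key simplification that eliminates all the quadratic $\circ$-terms involving brackets in the summand $\{x,y\}\circ z$. For $z \circ \{x,y\}$ one distributes $\circ$ over the sum and then applies axiom \eqref{pl1} to the term $z \circ [x,y]$, producing $[z\circ x, y] + [x, z\circ y]$. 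The third piece $[\{x,y\},z]$ is expanded by linearity of $[-,-]$ and produces the term $[[x,y],z]$ together with mixed terms $[x\circ y, z]$ and $[y\circ x, z]$.

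After assembling $\{\{x,y\},z\}$ in this normal form, I would show that the cyclic sum splits into three naturally-grouped pieces that each vanish independently:
\begin{enumerate}
\item[(i)] the purely $\circ$-iterated terms $x\circ(y\circ z) - y\circ(x\circ z) - z\circ(x\circ y) + z\circ(y\circ x)$ and their cyclic analogs, which cancel by a direct bookkeeping check (each of the six monomials $u\circ(v\circ w)$ appears with coefficients that sum to $0$);
\item[(ii)] the mixed terms $\pm[u\circ v, w]$ and $\pm[u, v\circ w]$, where, after using antisymmetry of $[-,-]$ to rewrite $[u, v\circ w] = -[v\circ w, u]$, each monomial appears with one $+$ and one $-$ sign across the three cyclic summands;
\item[(iii)] the purely iterated bracket terms $[[x,y],z]+[[y,z],x]+[[z,x],y]$, which vanish by the Jacobi identity of the original Lie bracket $[-,-]$.
\end{enumerate}

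There is no serious obstacle here; the verification is entirely mechanical once the two axioms are applied in the right places, and no further hypothesis is needed. The only bookkeeping point worth singling out is in piece (ii), where one must track both the sign change from antisymmetry of $[-,-]$ and the cyclic permutation to see the pairwise cancellations; everything else is a pure tabulation of coefficients.
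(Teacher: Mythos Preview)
Your verification is correct and complete: antisymmetry is immediate, and your three-part decomposition of the cyclic sum (pure $\circ$-terms cancelling by bookkeeping, mixed $[-,-]$-$\circ$ terms cancelling in pairs via antisymmetry of the bracket, and iterated brackets vanishing by Jacobi for $[-,-]$) is exactly the standard argument, with axioms \eqref{pl1} and \eqref{pl2} applied at the right places.

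Note, however, that the paper does not supply its own proof of this proposition: it is simply quoted from Vallette \cite{Va} and stated without argument. So there is nothing in the paper to compare your approach against; your write-up is the direct computational proof that the cited reference would contain.
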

\vsc
\begin{pro}\mlabel{pro:new}
\mcite{ML} Let  $(A, \circ,[-,-])$  be a post-Lie algebra. Then the
two binary operations
\vsa
\begin{equation}
x * y =x \circ y +[x, y],\quad [x,y]^{\mathrm{op}} = [y,x], \quad
\forall x, y \in A, \mlabel{oppostLie}
\vsb
\end{equation}
define a  post-Lie algebra $(A, * , [-,-]^{\mathrm{op}})$.
Moreover both  $(A, \circ,[-,-])$ and  $(A, * ,
[-,-]^{\mathrm{op}})$ have the same sub-adjacent Lie algebra
$(\mathfrak{g}(A),\{-,-\})$, where $\{-,-\}$ is defined by
Eq.~\eqref{eq5}.
\vsb
\end{pro}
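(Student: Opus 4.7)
The statement decomposes into three tasks: (i) verify that $(A,[-,-]^{\mathrm{op}})$ is a Lie algebra, (ii) verify the two post-Lie axioms \eqref{pl1}, \eqref{pl2} for the pair $(*,[-,-]^{\mathrm{op}})$, and (iii) check that the induced sub-adjacent Lie brackets coincide. Task (i) is automatic: the opposite bracket $[x,y]^{\mathrm{op}}=[y,x]=-[x,y]$ inherits antisymmetry and the Jacobi identity from $[-,-]$. Task (iii) is a one-line computation:
\begin{align*}
x*y-y*x+[x,y]^{\mathrm{op}}
&=(x\circ y+[x,y])-(y\circ x+[y,x])+[y,x]\\
&=x\circ y-y\circ x+[x,y]=\{x,y\},
\end{align*}
so the sub-adjacent bracket of $(A,*,[-,-]^{\mathrm{op}})$ agrees with $\{-,-\}$.

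The bulk of the work is Task (ii). My approach is a direct verification: expand each occurrence of $*$ as $a\circ b+[a,b]$, then use \eqref{pl1} or \eqref{pl2} of the original post-Lie structure to rewrite the $\circ$-terms, and finally apply the Jacobi identity of $[-,-]$ to dispose of the remaining triple-bracket terms. For \eqref{pl1} with $*$ and $[-,-]^{\mathrm{op}}$, after expanding both sides and applying the original \eqref{pl1} once to $x\circ[z,y]$, the difference of the two sides reduces to an expression of the form $[x,[z,y]]-[z,[x,y]]-[[x,z],y]$, which vanishes by Jacobi. For \eqref{pl2}, the identity from Task (iii) allows us to replace $x*y-y*x+[x,y]^{\mathrm{op}}$ by $\{x,y\}$, so the left-hand side becomes $\{x,y\}*z=\{x,y\}\circ z+[\{x,y\},z]$; expanding the right-hand side by \eqref{pl1} (applied to $x\circ[y,z]$ and $y\circ[x,z]$) and then collecting terms with the help of the original \eqref{pl2}, the discrepancy reduces to $[[x,y],z]-[x,[y,z]]+[y,[x,z]]$, which is again killed by Jacobi.

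The main obstacle, such as it is, is purely bookkeeping: keeping the $\circ$-terms and $[-,-]$-terms separate during expansion, and correctly tracking the two sign flips coming from $[-,-]^{\mathrm{op}}$. No new identity beyond \eqref{pl1}, \eqref{pl2}, and Jacobi is needed; in particular, pre-Lie type identities play no role. Once the dust settles, the fact that the Jacobi identity accounts for all residual terms is the conceptual reason the construction works, and it simultaneously explains why the sub-adjacent Lie algebra is preserved in Task (iii).
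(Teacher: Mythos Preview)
Your proof is correct: the expansions and cancellations you describe all go through, with the Jacobi identity absorbing the residual bracket terms exactly as you claim. The paper itself does not supply a proof of this proposition; it is quoted from~\mcite{ML}, so there is no in-paper argument to compare against. Your direct verification is the standard one and would serve perfectly well as a self-contained proof here.
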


Next we recall the geometric description of post-Lie algebras
\mcite{ML}.   Let $G$ be a   Lie group equipped with a
left-invariant affine connection $\nabla$.  Let $(\mathfrak{g},[-,-])$  be the associated Lie algebra of $G$, consisting of all left-invariant vector fields on $G$. Since  $\nabla$
  is left-invariant, for all $X, Y \in \mathfrak{g}$,  the covariant derivative   $\nabla_{X}Y$ of  $Y $
  in the direction of  $X$ is also left-invariant and thus defines an  $\mathbb{R}$-linear, non-associative binary operation  $\circ$  on  $\mathfrak{g}$
  by
\vsb
\begin{align}\mlabel{eq:pro}
  X \circ Y := \nabla_{X} Y.
\vsb
  \end{align}
The torsion  $\mathrm{T}$  is given by
\vsc
\begin{align}
\mathrm{T}(X, Y) :=X \circ  Y-Y \circ X-[ X, Y ], \;\; \forall
X,Y\in \mathfrak{g}, \mlabel{torsion}
\vsb
\end{align}
and it admits a covariant differential  $\nabla \mathrm{T}$.
Furthermore, the curvature  $\mathrm{R}$ is given by \vsb
\begin{align}
\mathrm{R}(X, Y) Z:  = X \circ(Y \circ Z)-Y \circ(X \circ Z)-[X,
Y] \circ Z,\;\; \forall X,Y,Z\in \mathfrak{g}. \mlabel{curvature}
\end{align}

When the connection $\nabla$ is flat and has constant torsion,
i.e.,  $\mathrm{R}=   0=\nabla\mathrm{T}$, then
$(\mathfrak{g}, \circ,-\mathrm{T}(-, -))$ is a  compatible
post-Lie algebra structure on $(\mathfrak{g},[-,-])$. Here $-\mathrm{T}(-, -)$ is a Lie bracket on $\mathfrak{g}$ because of the  antisymmetry of $\mathrm{T}$ and
the first Bianchi identity:
\vsb
\begin{align}
\mathfrak{S}\Big( \mathrm{T}(\mathrm{T}(X, Y), Z)+\left(\nabla_{X} {\rm
T}\right)(Y, Z)-\mathrm{R}(X, Y) Z \Big)=0, \;\;\forall X,Y,Z\in \mathfrak{g},
\end{align}
where   $\mathfrak{S}$  denotes the sum over the three cyclic permutations of  $(X, Y, Z)$.
Moreover, the flatness of $\nabla$ is equivalent to Eq.~(\mref{pl2}) as can be seen by inserting
Eq.~(\mref{torsion}) into the equation  $\mathrm{R}=0$:
\vsb
\begin{align*}
 (X \circ Y- Y \circ  X - \mathrm{T}(X, Y))\circ Z &=   [ X, Y ]\circ Z= X \circ (Y \circ Z)-Y \circ (X \circ
 Z), \;\;\forall X,Y,Z\in \mathfrak{g},
 \vse
\end{align*}
whereas  Eq.~(\mref{pl1}) follows from the definition of the covariant differential of  $\mathrm{T}$:
\vsb
\begin{align*}
0=(\nabla_X\mathrm{T})(Y, Z)=X \circ \mathrm{T}(Y,
Z)-\mathrm{T}(X \circ Y, Z)-\mathrm{T}(Y, X
\circ Z),  \;\;\forall X,Y,Z\in \mathfrak{g}.
\vse
\end{align*}

Moreover, by the bijection between left-invariant affine connections on a Lie group  and  binary operations on its Lie algebra \cite[page 70]{MMP},  there is a bijection  between  flat left-invariant affine connections  with constant torsion on a Lie group and   compatible post-Lie algebra structures on its Lie algebra.
In particular, $(\mathfrak{g}, \circ)$ is a pre-Lie
algebra when the  left-invariant affine connection
$\nabla$  is flat and  torsion-free, i.e.,  $\mathrm{R}=  0 =
\mathrm{T}$. Recall that giving an (left-invariant) {\bf  affine structure}  on a Lie group $G$
means equipping $G$ with a flat and torsion-free (left-invariant)
affine connection. Thus  there is a natural bijection between left-invariant affine  structures on a Lie
group  and  compatible pre-Lie algebra structures on its Lie algebra
\mcite{Bu}.

\vsc

\subsection{Generalized pseudo-Hessian Lie groups and \mysymbolcphp post-Lie algebras}\

Recall that a {\bf Hessian  (resp. pseudo-Hessian) Lie group } \mcite{CM,Sh1,Sh2} is a triple $(G,\nabla$, $\langle -,- \rangle)$ where $G$ is a connected Lie group, $\nabla$ is a flat and torsion-free left-invariant affine connection and $\langle-,-\rangle $ is a left-invariant  Riemannian (resp. pseudo-Riemannian) metric  on $G$, satisfying the Codazzi equation:
\vsc
\begin{align}
\langle \nabla_X Y, Z \rangle -  \langle  X, \nabla_Y Z \rangle = \langle \nabla_Y X, Z \rangle -  \langle  Y, \nabla_X  Z \rangle, \quad \forall X,Y,Z \in \mathfrak{g}. \mlabel{Ce}
\vsb
\end{align}

\newpage

We introduce the notion of a  \mysymbolcph Lie group whose
connection has constant torsion instead of the torsion-free
condition, extending the Hessian structures on Lie groups.
\begin{defi} \mlabel{d:ghlg}
A {\bf \mysymbolcph (resp. pseudo-Hessian)  Lie group } is a triple $(G,\nabla, \langle - , - \rangle)$ where $G$  is a connected Lie group,  $\nabla$ is a flat left-invariant affine  connection  with
constant torsion $\mathrm{T}$ and $\langle-,-\rangle $ is a left-invariant Riemannian (resp. pseudo-Riemannian)   metric
satisfying the  Codazzi equation \eqref{Ce} and
\vsb
     \begin{align}
 \langle \mathrm{T}(X,Y), Z \rangle =  \langle X, \mathrm{T}(Y,Z) \rangle, \quad \forall X,Y,Z \in \mathfrak{g}. \mlabel{cmcond}
 \end{align}
\end{defi}
\vsd

\begin{rmk}
When the connection is torsion-free, a generalized (pseudo-)Hessian  Lie group  reduces  to a (pseudo-)Hessian Lie group.
\end{rmk}
\vsb
The following conclusion is obvious.
\vsb
\begin{pro}
Let $(G,\nabla, \langle - , - \rangle)$ be a  \mysymbolcph Lie
group. Let $(\frak g, [-,-])$ be the Lie algebra of $G$. Define
$\circ$ and $T$ by Eqs.~{\rm (\mref{eq:pro})} and {\rm
(\mref{torsion})} respectively. Then
$(\mathfrak{g},\circ,-\mathrm{T}(-, -))$ is a post-Lie algebra and
$\langle - , - \rangle $ is an inner product on $\mathfrak{g}$
satisfying Eqs.~\eqref{Ce} and~\eqref{cmcond}.
\end{pro}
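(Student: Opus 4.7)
The plan is to combine, essentially verbatim, the geometric discussion of post-Lie algebras given in Section~\ref{s:quadpl} with the defining conditions of a generalized pseudo-Hessian Lie group. Since the conclusion is stated as obvious, the proof proposal amounts to checking that each piece of the conclusion follows from what has already been set up.

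First, I would verify the post-Lie algebra part. By Definition~\ref{d:ghlg}, the connection $\nabla$ is flat and has constant torsion, i.e., $\mathrm{R} = 0$ and $\nabla \mathrm{T} = 0$. This is precisely the hypothesis under which the paragraph before the proposition establishes that $(\mathfrak{g}, \circ, -\mathrm{T}(-,-))$ is a compatible post-Lie algebra structure on $(\mathfrak{g},[-,-])$: the operation $\circ$ defined by Eq.~\eqref{eq:pro} is left-invariant since $\nabla$ is, $-\mathrm{T}(-,-)$ is a Lie bracket by the antisymmetry of $\mathrm{T}$ combined with the first Bianchi identity (which here reduces to Jacobi because $\mathrm{R}=\nabla\mathrm{T}=0$), Eq.~\eqref{pl2} is equivalent to $\mathrm{R}=0$, and Eq.~\eqref{pl1} is equivalent to $\nabla\mathrm{T}=0$. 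Thus axioms~\eqref{pl1}--\eqref{pl2} of Definition~\ref{def1} are automatically satisfied.

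Second, I would argue that the bilinear form descends correctly. A left-invariant pseudo-Riemannian metric $\langle -,- \rangle$ on $G$ restricts at the identity $e \in G$ to a nondegenerate symmetric bilinear form on $T_e G = \mathfrak{g}$, which is by definition an inner product. Moreover, for any $X,Y,Z \in \mathfrak{g}$ viewed as left-invariant vector fields, each of the functions $\langle \nabla_X Y, Z\rangle$, $\langle X, \nabla_Y Z\rangle$, and $\langle \mathrm{T}(X,Y),Z\rangle$ is constant on $G$ by left-invariance, so the Codazzi equation \eqref{Ce} and the torsion-invariance condition \eqref{cmcond} on $G$ specialize, by evaluation at $e$, to identities among elements of $\mathfrak{g}$. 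Using $X\circ Y = \nabla_X Y$, these are exactly Eqs.~\eqref{Ce} and \eqref{cmcond} at the algebraic level.

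Since no genuine obstacle arises, the main care needed is bookkeeping: confirming that the left-invariance of $\nabla$, $\langle-,-\rangle$, and $\mathrm{T}$ indeed ensures each global equation restricts to the Lie algebra, and that the same symbol $\circ$ (resp.\ $\mathrm{T}$, $\langle-,-\rangle$) is used in two compatible senses via \eqref{eq:pro} and \eqref{torsion}. After this routine check, both conclusions follow directly.
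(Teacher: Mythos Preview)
Your proposal is correct and matches the paper's approach: the paper simply declares the conclusion ``obvious'' and gives no proof, and your write-up is exactly the unpacking of that obviousness, drawing on the preceding geometric discussion (flat connection with constant torsion $\Rightarrow$ post-Lie algebra) together with the defining conditions in Definition~\ref{d:ghlg}. One small point: the statement concerns a generalized \emph{Hessian} (not pseudo-Hessian) Lie group, so $\langle-,-\rangle$ is Riemannian and its restriction to $\mathfrak{g}$ is a genuine (positive-definite) inner product; your phrasing ``pseudo-Riemannian metric \dots which is by definition an inner product'' should be adjusted accordingly, but this does not affect the argument.
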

\vsb
This result motivates us to introduce the following notion.

\begin{defi}
A  bilinear form $\mathcal B$ on a post-Lie algebra $(A,\circ,
[-,-])$ is called {\bf invariant} if
\vsb
\begin{align}
    \mathcal{B}([x,y], z)    &= \mathcal{B}(x,[y,z]),  \mlabel{lieinv}\\
\mathcal{B}(x \circ y, z)  - \mathcal{B}( x, y \circ z) &=
\mathcal{B}(y \circ x, z)  - \mathcal{B}( y, x \circ z),\quad
\forall x,y,z \in A. \mlabel{2-cocycle}
\end{align}
A {\bf \mysymbolcphp post-Lie algebra} is a quadruple $(A,\circ,
[-,-],\mathcal{B})$ where $(A$, $\circ$, $[-,-])$  is a
post-Lie algebra and $\mathcal{B}$ is a nondegenerate symmetric
invariant bilinear form on $(A,\circ, [-,-])$.
\mlabel{d:gphpl}
\end{defi}
\vsd
\begin{rmk}
Recall the following two notions.

\begin{enumerate}
\item A bilinear form  $\mathcal{B}$  on a Lie algebra
$(\mathfrak{g}, [-,-])$  is called \textbf{invariant} if
$\mathcal{B}$   satisfies Eq.~\eqref{lieinv}.   A  Lie algebra
$(\mathfrak{g},[-,-])$ equipped with a  nondegenerate symmetric
invariant bilinear form $\mathcal{B}$ is called a {\bf quadratic
Lie algebra} and denoted by $(\mathfrak{g},[-,-],\mathcal{B})$.
\item A {\bf pseudo-Hessian pre-Lie algebra} \mcite{NB} is a
triple $(A,\circ,\mathcal{B})$  where $(A,\circ)$ is a pre-Lie
algebra and $\mathcal B$ is a   nondegenerate symmetric bilinear
form satisfying Eq.~(\mref{2-cocycle}). In fact, a bilinear form
$\mathcal B$ on a pre-Lie algebra $(A,\circ)$ satisfying
Eq.~(\mref{2-cocycle}) is called a {\bf 2-cocycle} on $(A,\circ)$ in the
sense of the cohomology theory of pre-Lie algebras \mcite{B1}.
\end{enumerate}
 Thus a \gphpl algebra $(A,\circ,
[-,-])$  reduces to a pseudo-Hessian pre-Lie algebra if $[-,-] =
0$, whereas it reduces to  a quadratic Lie algebra if $\circ =
0$.
\end{rmk}

It is well known that a connected Lie group admits a bi-invariant  Riemannian metric if and only if  its Lie algebra admits an  invariant inner product \mcite{Kn}.

\vsb
\begin{ex}\mlabel{ECpostLie}
Let $G$ be an $n$-dimensional connected Lie group $G$ and
$(\mathfrak{g},[-,-])$ be its Lie algebra. Let
$\{X_1,\cdots,X_n\}$ be a basis of $(\mathfrak{g},[-,-])$. A
left-invariant affine connection $\nabla$ on $G$ is therefore
uniquely determined by the $n^{3}$ Christoffel coefficients $\Gamma_{i
j}^{k}$  defined by
\vsc
\begin{align*}
\nabla_{X_{i}} X_{j}=\sum_{k=1}^{n} \Gamma_{i j}^{k} X_{k}, \quad
\forall X_{i} \in \mathfrak{g},\quad  1 \le i \le n.
\end{align*}
\vsb
This identifies $\nabla$ with a linear map
$\Lambda: \mathfrak{g} \rightarrow \mathfrak{g l}(\mathfrak{g})$,
under which the torsion and curvature become
$$
\mathrm{T}(X, Y)  =\Lambda_{X} Y-\Lambda_{Y} X-[X, Y], \quad
\mathrm{R}(X, Y) Z  =\left[\Lambda_{X}, \Lambda_{Y}\right]
Z-\Lambda_{[X, Y]} Z, \quad \forall  X,Y,Z \in \mathfrak{g}.
$$
\begin{enumerate}
\item  The case when $\Lambda=0$ corresponds to a flat
left-invariant affine connection $\nabla^-$ with constant torsion given by
$\mathrm{T}(X, Y)= -[X, Y]$  such that   $\nabla^-_XY=0$ for all
$X,Y \in \mathfrak{g}$, which is called the {\bf ($-$)-connection}
of $G$. It defines the trivial post-Lie algebra $(\mathfrak{g},
\circ = 0,[-,-])$.

\item  The case when $\Lambda = \ad$ gives
\begin{align*}
\mathrm{T}(X, Y) & =
[X, Y] - [Y, X]-[X, Y]= [X, Y], \\
\mathrm{R}(X, Y) Z 
&=  [X,[[Y,Z]] - [Y,[X,Z]] - [[X, Y],Z] = 0, \; \forall  X,Y,Z \in
\mathfrak{g}.
\end{align*}
The corresponding connection
$\nabla^+$, called the {\bf (+)-connection} of $G$, again has constant torsion and $\nabla^+_XY=[X, Y]$ for all $X,Y \in \mathfrak{g}$. It defines the
post-Lie algebra $(\mathfrak{g}, \circ =
[-,-],[-,-]^{\mathrm{op}})$.
\end{enumerate}

If there is a bi-invariant Riemannian metric $\langle-,-\rangle $
on $G$, then $\langle-,-\rangle$ is an invariant inner product on
$(\mathfrak{g},[-,-])$.  Thus $(G,\nabla^-, \langle - , -
\rangle)$ and $(G,\nabla^+, \langle - , - \rangle)$ are
\mysymbolcph Lie groups and $\langle-,-\rangle$ is an invariant
inner product on the post-Lie algebra $(\mathfrak{g}, 0,[-,-])$
and $(\mathfrak{g}, [-,-],[-,-]^{\mathrm{op}})$ respectively. Both
the ($-$) and ($+$)-connections  on Lie groups were first
introduced by Cartan and Schouten  \mcite{ECJA} and are  called
{\bf Cartan-Schouten connections}.  See \mcite{Ca,No,MMP} for
more details.
\end{ex}

\begin{pro}
Let $(A,\circ,[-,-])$ be a post-Lie algebra and $\mathcal{B}$ be
an \mysymbolnewi on $(A,\circ,[-,-])$. Define a bilinear form
$\omega$ by
\begin{align*}
\omega(x,y): = \mathcal{B}(x,y) - \mathcal{B}(y,x), \quad \forall x,y \in A.
\end{align*}
Then $\omega$ is a 2-cocycle of the sub-adjacent Lie algebra $(\mathfrak{g}(A),\{-,-\})$, that is, $\omega$ satisfies
\begin{align*}
 \omega(\{x,y\},z)+ \omega(\{y,z\},x)+\omega(\{z,x\},y) = 0, \quad \forall x,y,z \in A.
\end{align*}
\end{pro}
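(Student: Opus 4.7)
The plan is a direct expansion followed by a careful bookkeeping that splits the cyclic sum into a "Lie bracket part'' and a "$\circ$ part,'' and shows each part vanishes separately using the two invariance conditions \eqref{lieinv} and \eqref{2-cocycle}. Note that $\mathcal{B}$ is \emph{not} assumed to be symmetric here, so $\omega$ is genuinely the antisymmetrization of $\mathcal{B}$, and the main point is that antisymmetrizing kills contributions that are already cyclically symmetric in $\mathcal{B}$.

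First, I would substitute $\{x,y\} = x\circ y - y\circ x + [x,y]$ into $\omega(\{x,y\},z) = \mathcal{B}(\{x,y\},z) - \mathcal{B}(z,\{x,y\})$ and sum cyclically over $(x,y,z)$. This produces two groups of terms: those involving $[-,-]$ and those involving $\circ$. For the Lie bracket part, applying \eqref{lieinv} gives
\[
\mathcal{B}([x,y],z)+\mathcal{B}([y,z],x)+\mathcal{B}([z,x],y) = \mathcal{B}(x,[y,z])+\mathcal{B}(y,[z,x])+\mathcal{B}(z,[x,y]),
\]
i.e.\ the cyclic sum of $\mathcal{B}([\cdot,\cdot],\cdot)$ equals the cyclic sum of $\mathcal{B}(\cdot,[\cdot,\cdot])$. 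Since the $[-,-]$ contribution to $\sum_{\mathrm{cyc}}\omega(\{x,y\},z)$ is precisely the difference of these two cyclic sums, it vanishes.

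For the $\circ$ part, the contribution to the cyclic sum is
\[
\sum_{\mathrm{cyc}}\bigl[\mathcal{B}(x\circ y,z)-\mathcal{B}(y\circ x,z)\bigr] \;-\; \sum_{\mathrm{cyc}}\bigl[\mathcal{B}(z,x\circ y)-\mathcal{B}(z,y\circ x)\bigr].
\]
Rewriting \eqref{2-cocycle} in the form
\[
\mathcal{B}(x\circ y,z)-\mathcal{B}(y\circ x,z) = \mathcal{B}(x, y\circ z) - \mathcal{B}(y, x\circ z),
\]
I would apply this identity termwise to the first cyclic sum. The main computational step is to reindex the resulting expression cyclically and compare it with the second cyclic sum: every term of the form $\mathcal{B}(u, v\circ w) - \mathcal{B}(u, w\circ v)$ that appears in the first sum after substitution matches exactly a term in the second sum, and the two sums cancel.

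The only place where I expect any real care to be needed is in the reindexing of the cyclic sum after applying \eqref{2-cocycle}, since the roles of ``left argument'' and ``right argument'' of $\mathcal{B}$ get permuted and it is easy to drop a sign; a clean way to handle this is to collect the six resulting terms of the form $\pm\mathcal{B}(u, v\circ w)$ and observe that for each ordered pair $(u,\{v,w\})$ the two signs cancel. No additional axiom of the post-Lie algebra (beyond the invariance of $\mathcal{B}$) is required, which matches the fact that $\{-,-\}$ being a Lie bracket was already established in Proposition~\ref{newLie}.
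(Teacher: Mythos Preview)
Your proposal is correct and follows exactly the approach the paper indicates: the paper's proof reads in full ``The $2$-cocycle condition is verified directly applying \eqref{eq5},'' and your splitting into the $[-,-]$ part (handled by \eqref{lieinv}) and the $\circ$ part (handled by \eqref{2-cocycle} plus cyclic reindexing) is precisely this direct verification written out. Your observation that only the invariance conditions on $\mathcal{B}$ are used, not the post-Lie axioms themselves, is also accurate.
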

\begin{proof}
The $2$-cycle condition is verified directly applying
\meqref{eq5}.
\end{proof}

Moreover, from a \gphpl algebra, we  naturally get
another one.
\begin{pro}
Let $(A,\circ,[-,-],\mathcal{B})$ be a \gphpl algebra. Then $\mathcal{B}$ is   an \mysymbolnewi on the post-Lie algebra
 $(A,*,[-,-]^{\mathrm{op}}) $ given in Proposition \mref{pro:new}, such that $(A,*,[-,-]^{\mathrm{op}}, \mathcal{B}) $ is a  \gphpl algebra.
\end{pro}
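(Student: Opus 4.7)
The plan is to verify directly, from the definitions, that $\mathcal{B}$ satisfies the two invariance conditions of Definition~\mref{d:gphpl} with respect to the post-Lie algebra structure $(A,*,[-,-]^{\mathrm{op}})$. Since $\mathcal{B}$ is already symmetric and nondegenerate, once these two identities are established, $(A,*,[-,-]^{\mathrm{op}},\mathcal{B})$ is a \gphpl algebra.

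First I would verify the Lie invariance condition \meqref{lieinv} for $[-,-]^{\mathrm{op}}$. Using that $\mathcal{B}$ is symmetric and that $[-,-]$ is antisymmetric, one computes
\begin{equation*}
\mathcal{B}([x,y]^{\mathrm{op}},z) = \mathcal{B}([y,x],z) = -\mathcal{B}([x,y],z) = -\mathcal{B}(x,[y,z]) = \mathcal{B}(x,[z,y]) = \mathcal{B}(x,[y,z]^{\mathrm{op}}),
\end{equation*}
where the third equality uses the original Lie invariance of $\mathcal{B}$ on $(A,[-,-])$.

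Next I would verify the 2-cocycle-type condition \meqref{2-cocycle} for the operation $*$. Expanding $x*y = x\circ y + [x,y]$ and $y*z = y\circ z + [y,z]$ (and similarly for the other two terms), the difference $\mathcal{B}(x*y,z) - \mathcal{B}(x,y*z)$ splits into a $\circ$-part
$\mathcal{B}(x\circ y,z) - \mathcal{B}(x,y\circ z)$
and a bracket part $\mathcal{B}([x,y],z) - \mathcal{B}(x,[y,z])$, the latter of which vanishes by \meqref{lieinv}. The analogous computation applies to $\mathcal{B}(y*x,z) - \mathcal{B}(y,x*z)$, its bracket part being $\mathcal{B}([y,x],z) - \mathcal{B}(y,[x,z])$, which again vanishes by \meqref{lieinv}. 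What remains is precisely
\begin{equation*}
\mathcal{B}(x\circ y,z) - \mathcal{B}(x,y\circ z) = \mathcal{B}(y\circ x,z) - \mathcal{B}(y,x\circ z),
\end{equation*}
which is exactly \meqref{2-cocycle} for the original \gphpl algebra $(A,\circ,[-,-],\mathcal{B})$.

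Since no real obstacle arises — the verification is a pure bookkeeping exercise in symmetry and antisymmetry, together with the two given invariance conditions — the only care needed is to track signs correctly when replacing $[-,-]$ by its opposite. Thus both invariance conditions hold for $(A,*,[-,-]^{\mathrm{op}})$, and the nondegeneracy and symmetry of $\mathcal{B}$ are inherited from the hypothesis, completing the proof.
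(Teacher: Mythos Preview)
Your proof is correct and follows essentially the same approach as the paper: both expand $*$ via \meqref{oppostLie} and observe that the bracket contributions cancel by \meqref{lieinv} while the $\circ$-contributions reduce to the original 2-cocycle condition \meqref{2-cocycle}. You additionally spell out the Lie-invariance of $\mathcal{B}$ with respect to $[-,-]^{\mathrm{op}}$, which the paper leaves implicit.
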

\begin{proof}
For all $x,y,z \in A$, we have
\begin{eqnarray*}
&&\mathcal{B}(x * y, z)  - \mathcal{B}( x, y * z) - \mathcal{B}(y * x, z)  + \mathcal{B}( y, x * z)\\
&&\overset{\eqref{oppostLie}}{=}\mathcal{B}(x \circ y + [x,y], z)
- \mathcal{B}( x, y \circ z + [y,z]) - \mathcal{B}(y \circ x +
[y,x], z)  + \mathcal{B}( y, x \circ z + [x,z])= 0.
\end{eqnarray*}
This completes the proof.
\end{proof}

\subsection{Generalized pseudo-Hessian post-Lie algebras  from quadratic  Rota-Baxter   Lie  algebras of weight one}\

\begin{defi} Let $(\mathfrak{g},[-,-])$  be a Lie algebra. A {\bf Rota-Baxter operator of weight  $\lambda \in \mathbb{F}$}  on the  Lie algebra  $(\mathfrak{g},[-,-])$  is a linear map  $P: \mathfrak{g} \rightarrow \mathfrak{g}$  satisfying
\begin{align}
[P(x), P(y)]=P([P(x), y]+[x, P(y)]+\lambda[x, y]), \quad \forall x, y \in \mathfrak{g}. \mlabel{LieRB}
\end{align}
A Lie algebra  $(\mathfrak{g},[-,-])$ with a Rota-Baxter operator  $P$  of weight  $\lambda $ is called a \textbf{Rota-Baxter Lie algebra of weight  $\lambda$}  and denoted by  $(\mathfrak{g},[-,-], P)$.
\end{defi}

\begin{pro}\mcite{BGN}\mlabel{ipl1} Let  $(\mathfrak{g},[-,-], P)$  be a Rota-Baxter Lie algebra of weight one. Define
\begin{align}
x \circ y :=  [P(x), y], \quad \forall x, y \in \mathfrak{g}.\mlabel{ipl}
\end{align}
Then  $(\mathfrak{g}, \circ,[-,-])$  is a post-Lie algebra, called the \textbf{induced post-Lie algebra} from the Rota-Baxter Lie algebra  $(\mathfrak{g},[-,-], P)$.
\end{pro}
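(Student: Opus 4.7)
The plan is to verify the two defining axioms \eqref{pl1} and \eqref{pl2} of a post-Lie algebra directly, using only the Jacobi identity of $(\mathfrak{g}, [-,-])$ together with the Rota-Baxter relation \eqref{LieRB} specialized to $\lambda = 1$. Throughout, I will freely rewrite $x \circ y = [P(x), y]$ in terms of the Lie bracket.

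For \eqref{pl1}, I expand $x \circ [y, z] = [P(x), [y, z]]$ and apply the Jacobi identity in $\mathfrak{g}$ to rewrite this as $[[P(x), y], z] + [y, [P(x), z]]$, which is exactly $[x \circ y, z] + [y, x \circ z]$. No use of the Rota-Baxter identity is needed for this step.

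For \eqref{pl2}, I first compute
\begin{align*}
x \circ (y \circ z) - y \circ (x \circ z) = [P(x), [P(y), z]] - [P(y), [P(x), z]],
\end{align*}
which by the Jacobi identity collapses to $[[P(x), P(y)], z]$. Applying \eqref{LieRB} with $\lambda = 1$, I replace $[P(x), P(y)]$ by $P([P(x), y] + [x, P(y)] + [x, y])$. Using antisymmetry of the Lie bracket, the combination $[P(x), y] + [x, P(y)]$ equals $x \circ y - y \circ x$, so that
\begin{align*}
[[P(x), P(y)], z] = [P(x \circ y - y \circ x + [x, y]), z] = (x \circ y - y \circ x + [x, y]) \circ z,
\end{align*}
which is exactly the left-hand side of \eqref{pl2}.

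The computation is completely straightforward; the only substantive observation is that the symmetric combination $[P(x), y] + [x, P(y)]$ appearing in the Rota-Baxter axiom coincides with $x \circ y - y \circ x$ after using antisymmetry, which is precisely why weight one (rather than any other weight) is the value compatible with the post-Lie structure: the weight $\lambda [x,y]$ term is what supplies the residual $[x,y]$ summand inside the outer $P$, matching the Lie bracket appearing in \eqref{pl2}. There is no serious obstacle.
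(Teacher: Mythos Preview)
Your proof is correct. The paper does not supply its own proof of this proposition; it is quoted from \cite{BGN}, and your direct verification of the axioms \eqref{pl1} and \eqref{pl2} via the Jacobi identity and the weight-one Rota-Baxter relation is the standard argument.
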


\begin{pro}\mlabel{iblf} Let  $(\mathfrak{g},[-,-], P)$ be a Rota-Baxter Lie algebra of weight one and  $(\mathfrak{g},\circ$, $[-,-])$  be the induced post-Lie algebra. If there is an invariant
bilinear form  $\mathcal{B}$  on the Lie algebra
$(\mathfrak{g},[-,-])$, then  $\mathcal{B}$  is \mysymbolnew  on
the post-Lie algebra  $(\mathfrak{g},\circ,[-,-])$. In particular,
if  $(\mathfrak{g},[-,-], P,\mathcal B)$ is a {\bf quadratic
Rota-Baxter Lie algebra of weight one} in the sense that
$(\mathfrak{g},[-,-], P)$ is Rota-Baxter Lie algebra of weight one
and $(\mathfrak g, [-,-],\mathcal B)$ is a quadratic Lie algebra,
then $(\mathfrak{g},\circ,[-,-],\mathcal B)$ is a \mysymbolcphp
post-Lie algebra.
\end{pro}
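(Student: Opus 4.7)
The plan is to verify the two invariance conditions \meqref{lieinv} and \meqref{2-cocycle} of Definition~\mref{d:gphpl} for the bilinear form $\mathcal{B}$ on the induced post-Lie algebra $(\mathfrak{g},\circ,[-,-])$. Condition \meqref{lieinv} on the Lie bracket is literally the hypothesis that $\mathcal{B}$ is invariant on the Lie algebra $(\mathfrak{g},[-,-])$, so there is nothing to prove for that one.

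For the 2-cocycle condition \meqref{2-cocycle}, the first step is to substitute the definition $x\circ y=[P(x),y]$ from \meqref{ipl} into all four terms, obtaining
\begin{equation*}
\mathcal{B}([P(x),y],z)-\mathcal{B}(x,[P(y),z])-\mathcal{B}([P(y),x],z)+\mathcal{B}(y,[P(x),z]).
\end{equation*}
The second step is to apply Lie-invariance \meqref{lieinv} once to the two terms in which the Lie bracket sits in the second slot of $\mathcal{B}$ (the second and fourth terms), rewriting them in the form $\mathcal{B}([\cdot,\cdot],z)$; this reduces the whole expression to $\mathcal{B}\bigl([P(x),y]-[x,P(y)]-[P(y),x]+[y,P(x)],\,z\bigr)$. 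The third step is to observe that the bracketed combination vanishes by antisymmetry of the Lie bracket, since $[P(x),y]+[y,P(x)]=0$ and $-[x,P(y)]-[P(y),x]=0$.

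The second assertion then follows immediately: given a quadratic Rota-Baxter Lie algebra of weight one $(\mathfrak{g},[-,-],P,\mathcal{B})$, Proposition~\mref{ipl1} provides the post-Lie structure on $\mathfrak{g}$, the first part of the proposition provides the invariance of $\mathcal{B}$ on that structure, and symmetry together with nondegeneracy of $\mathcal{B}$ are already assumed; these are precisely the ingredients required by Definition~\mref{d:gphpl} of a \gphpl algebra.

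There is no substantial obstacle in this argument. Notably, the Rota-Baxter identity \meqref{LieRB} is not used at all in verifying \meqref{2-cocycle}; the conclusion follows from Lie-invariance of $\mathcal{B}$ alone together with the particular ad-type form of $\circ$. The only point requiring minor care is being consistent about the direction in which \meqref{lieinv} is applied (left-to-right versus right-to-left) when rewriting each of the four terms so that they all appear in the common form $\mathcal{B}([\cdot,\cdot],z)$ before the antisymmetry cancellation is invoked.
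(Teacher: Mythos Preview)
Your proof is correct and follows essentially the same idea as the paper: both rely solely on the Lie-invariance of $\mathcal{B}$ and the ad-type form $x\circ y=[P(x),y]$, with the Rota-Baxter identity playing no role. The only minor difference is that the paper first isolates the stronger intermediate identity $\mathcal{B}(x\circ y,z)=-\mathcal{B}(y,x\circ z)$ (left-invariance, referenced in the subsequent remark) and observes that \meqref{2-cocycle} follows from it, whereas you verify \meqref{2-cocycle} directly by the four-term cancellation.
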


\begin{proof} Let  $x, y, z \in \mathfrak{g}$. Then we have
\begin{align}
\mathcal{B}(x \circ y, z) &= \mathcal{B}([P(x), y], z) = -\mathcal{B}(y,[P(x), z])=-\mathcal{B}(y, x \circ z).
\mlabel{left-invariant}
\end{align}
Then Eq.~(\mref{2-cocycle}) follows, giving the conclusion.
\end{proof}

\begin{rmk}
A bilinear form  $\mathcal{B}$  on a post-Lie algebra $(A,\!\circ,
[-,-])$  is called \textbf{left-invariant} if $\mathcal{B}$
satisfies Eqs.~\eqref{lieinv} and~\eqref{left-invariant}. In the
case of pre-Lie algebras, a left-invariant bilinear form is automatically a
2-cocycle. Moreover, there is a natural bijection
between the set of pre-Lie algebras with a nondegenerate symmetric
left-invariant bilinear form and the set of connected and simply
connected Lie groups with a left-invariant flat pseudo-metric
\mcite{BGLM,BHC,Mil}.
\end{rmk}

\begin{ex}\mlabel{splex}
Let  $\mathfrak{s l}(2, \mathbb{C})$ be the 3-dimensional complex simple Lie algebra   with a basis:
{\small
\begin{align*}
\left\{e_{1}=\frac{1}{2}\left(\begin{array}{cc}
0 & 1 \\
-1 & 0
\end{array}\right), \ e_{2}=\frac{1}{2 \mathrm{i}}\left(\begin{array}{ll}
0 & 1 \\
1 & 0
\end{array}\right), \ e_{3}=\frac{1}{2 \mathrm{i}}\left(\begin{array}{cc}
1 & 0 \\
0 & -1
\end{array}\right)\right\}, \quad \mathrm{i} = \sqrt{-1},
\end{align*}
}
whose products are given by
$[e_1, e_2]=e_3, [e_2, e_3]=e_1, [e_3, e_1]=e_2.
$
The Killing form $\kappa$ on  $\mathfrak{s l}(2, \mathbb{C})$ defined by
\begin{align*}
\kappa(e_i,e_j): = \mathrm{Tr}(\mathrm{ad}(e_i)\mathrm{ad}(e_j))
=-2\delta_{ij}, \quad   i,j = 1,2,3,
\end{align*}
is a  nondegenerate symmetric invariant bilinear form. The
linear operator $P$ on $\mathfrak{sl}(2,\mathbb{C})$ given by
\begin{align*}
P(e_1)=e_1,\quad P(e_2)=-\frac{1}{2} e_2 + \frac{\mathrm{i}}{2}  e_3,\quad  P(e_3)=-\frac{\mathrm{i}}{2} e_2 - \frac{1}{2}  e_3,
\end{align*}
is a Rota-Baxter operator of weight one~\mcite{PLBG}. By
Proposition \mref{iblf}, $\kappa$ is also invariant on the induced
post-Lie algebra  $(\mathfrak{s l}(2, \mathbb{C}), \circ,[-,-])$ with $\circ$ explicitly given by
{\small
\begin{eqnarray*} e_{1} \circ e_{1} &=& 0, \quad
e_{2} \circ e_{1}= \frac{\mathrm{i}}{2} e_{2}+\frac{1}{2} e_{3},
\quad e_{3} \circ e_{1}=-\frac{1}{2} e_{2}+\frac{\mathrm{i}}{2}
e_{3},\quad
e_{1} \circ e_{2} = e_{3},\\
 e_{2} \circ e_{2}&=&-\frac{\mathrm{i}}{2} e_{1}, \quad e_{3} \circ e_{2}=\frac{1}{2}
 e_{1},\quad
e_{1} \circ e_{3} = -e_{2}, \quad e_{2} \circ e_{3}=-\frac{1}{2}
e_{1}, \quad e_{3} \circ e_{3}=-\frac{\mathrm{i}}{2} e_{1}.
\end{eqnarray*}
}
Then  $(\mathfrak{s l}(2, \mathbb{C}), \circ,[-,-],\kappa)$ is a
\gphpl algebra.
\end{ex}

\begin{ex}Recall \mcite{GLS} that a Rota-Baxter operator $P: G \to G$ on a Lie group $G$  is a smooth map  satisfying
 \begin{align*}
 P(g_1)P(g_2) = P\Big(g_1 P(g_1) g_2 \big(P(g_1)\big)^{-1}\Big),\quad \forall g_1,g_2 \in G.
 \end{align*}
Then the map $P_{*e}$, which is the tangent  of $P$ at the
identity $e$, is a Rota-Baxter operator of weight one on the Lie
algebra $(\mathfrak{g},[-,-])$. Thus there is an induced
post-Lie algebra $(\mathfrak{g},\circ,[-,-])$ where the
operation $\circ$ is defined by Eq.~\eqref{ipl}.     Moreover, if
$G$ is a connected  Lie group equipped with a Rota-Baxter operator
$P$ and a bi-invariant pseudo-Riemannian metric
$\langle-,-\rangle$, then
$(\mathfrak{g},\circ,[-,-],\langle-,-\rangle)$ is a
\gphpl algebra  by Proposition \mref{iblf}.
 \end{ex}

\section{Partial-pre-post-Lie  algebras}
\mlabel{s:ldla}
We introduce the notion of partial-pre-post-Lie  algebras (\mysymbol algebras) and then study some
basic properties. They are characterized in terms of
representations of post-Lie algebras, illustrating a ``partial" splitting of operations of post-Lie algebras. Such a
characterization is further interpreted in terms of an analog of
$\mathcal O$-operators, namely, \dpdops. As a consequence,
 \mysymbol algebras  are obtained from   \gphpl algebras.

\vsb
\subsection{Partial-pre-post-Lie  algebras and post-Lie algebras}
\vsa
\begin{defi} \mlabel{ldldef}
A {\bf partial-pre-post-Lie  algebra (\mysymbol algebra)} is a
quadruple $(A$,  $\rhd$, $\lhd$, $[-,-])$, where  $(A,[-,-])$
is a Lie algebra  and $\rhd,\lhd:   A \otimes A \rightarrow  A$
are binary operations  satisfying
\begin{align}
 x \lhd  [y,z]  &=  [x,y]\lhd z +  [z,x] \lhd y, \mlabel{DPSPL1}\\
  [x,y \lhd  z + z \lhd  y]&=   [x,z] \lhd  y + y \lhd [x,z] = 0,\mlabel{DPSPL2}\\
 x \rhd [y,z]  - [y,z]  \lhd x &=  [x \rhd y + x \lhd y,z]+[y, x \rhd z - z \lhd x  ],\mlabel{DPSPL3}\\
x \rhd( y \lhd z) &= (x \rhd y - y \lhd x)  \lhd z+ y \lhd (x \rhd z + x \lhd z )  - [x,y \lhd z], \mlabel{DPSPL4}\\
\{x,y\}\rhd z &=    x \rhd ( y \rhd z  ) \!-\!y \rhd (x \rhd z )  +  [y,x \lhd z] -  [x, y \lhd z]-  [x,y]  \lhd z, \mlabel{DPSPL5}
\end{align}
where \vsb
\begin{equation}
\{x,y\}: = x \rhd y + x \lhd y  - y \rhd x - y \lhd x + [x,y], \quad x, y, z \in A.
\mlabel{eq:curbra}
\end{equation}
\end{defi}
\vsd

\begin{rmk} Let $(A,  \rhd,\lhd,[-,-])$ be a  \mysymbol algebra.
\begin{enumerate}
\item  If the Lie bracket $[-,-]$ is abelian,
then $(A, \rhd,\lhd)$ is an L-dendriform algebra. Here recall \mcite{BLN} that an {\bf L-dendriform algebra} is a
triple $(A, \rhd,\lhd)$, where $A$ is a vector space and
$\rhd,\lhd:   A \otimes A \rightarrow  A$ are binary operations
satisfying the following equations.
\begin{align}
  ( x \rhd y - y \lhd x) \lhd z  &= x \rhd (y \lhd z) - y \lhd (x \rhd z + x \lhd z),  \\
 (x \rhd y + x \lhd y - y \rhd x - y \lhd x) \rhd z &=    x \rhd ( y \rhd z  ) -y \rhd (x \rhd z ), \; \forall x, y, z \in A.
\end{align}

\item  If the operation $\lhd $ is trivial, then  $(A,
\rhd,[-,-])$ is a post-Lie algebra.
\end{enumerate}
\end{rmk}
\vsd

\begin{lem}\label{lem:equ}
Let  $(A,[-,-])$  be a Lie algebra and $\rhd,\lhd$ be binary
operations on  $A$. Define a binary operation $\{-,-\}$ by
Eq.~\eqref{eq:curbra} and a binary operation $\circ$ by
\begin{equation}
x \circ  y :=x \rhd y + x \lhd y, \quad \forall x,y\in A.
\mlabel{hpla}
\end{equation}
Then the following conclusions hold.
\begin{enumerate}
\item \label{it:t1} If Eq.~\eqref{DPSPL3} holds, then
Eq.~\eqref{DPSPL2} holds if and only if Eq.~\eqref{pl1} holds.

\item \label{it:t2} If Eq.~\eqref{DPSPL4} holds, then
Eq.~\eqref{DPSPL5} holds if and only if Eq.~\eqref{pl2} holds.
\end{enumerate}
\end{lem}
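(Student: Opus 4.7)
The strategy for both parts is the same: substitute the decomposition $\circ = \rhd + \lhd$ into the post-Lie axiom and compare it term-by-term with the corresponding \mysymbol axiom. All the ``excess'' terms generated by the splitting then collect into a single clean relation, which turns out to be (a relabeling of) the remaining \mysymbol axiom.

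For part (a), I would first expand pl1 using $x \circ y = x \rhd y + x \lhd y$:
\begin{align*}
x\rhd[y,z] + x\lhd[y,z] = [x\rhd y,z] + [x\lhd y,z] + [y,x\rhd z] + [y,x\lhd z],
\end{align*}
and note that DPSPL3 can be rewritten as
\begin{align*}
x\rhd[y,z] - [y,z]\lhd x = [x\rhd y,z] + [x\lhd y,z] + [y,x\rhd z] - [y,z\lhd x].
\end{align*}
Subtracting DPSPL3 from the expanded pl1, every $\rhd$-term and every common $[-,-]$-term cancels, leaving
\begin{align*}
x\lhd[y,z] + [y,z]\lhd x = [y,\,x\lhd z + z\lhd x],
\end{align*}
which, after the relabeling $x\leftrightarrow y$, is exactly DPSPL2. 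Hence, given DPSPL3, pl1 holds iff DPSPL2 holds.

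For part (b), I would again expand both sides of pl2 using $\circ = \rhd + \lhd$, producing the eight nested terms $\alpha\,\square\,(\beta\,\square'\,z)$ with $\square,\square'\in\{\rhd,\lhd\}$ on the right-hand side. The difference of the left-hand sides of pl2 and DPSPL5 is simply $\{x,y\}\lhd z$. For the right-hand sides, the mixed terms $x\rhd(y\lhd z)$ and $y\rhd(x\lhd z)$ that appear in pl2 but not in DPSPL5 should be rewritten using DPSPL4 together with its $(x\leftrightarrow y)$-swap. After substitution, the cross-terms $y\lhd(x\rhd z)$, $y\lhd(x\lhd z)$, $x\lhd(y\rhd z)$, $x\lhd(y\lhd z)$, $[x,y\lhd z]$, and $[y,x\lhd z]$ all cancel against their counterparts already present, leaving precisely
\begin{align*}
(x\rhd y + x\lhd y - y\rhd x - y\lhd x + [x,y])\lhd z = \{x,y\}\lhd z,
\end{align*}
which matches the left-hand side difference. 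Therefore, given DPSPL4, pl2 is equivalent to DPSPL5.

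The main obstacle is purely the combinatorial bookkeeping in part (b): DPSPL4 and its swap together introduce twelve new terms that must be tracked to cancel in exact pairs against the six mixed terms in pl2 and the three correction terms $-[x,y]\lhd z$, $[y,x\lhd z]$, $-[x,y\lhd z]$ in DPSPL5. Once one commits to the simultaneous use of DPSPL4 and its swap, the cancellations are forced. The conceptual point underlying both parts is that the partial splitting $\circ = \rhd + \lhd$ is calibrated so that the ``$\lhd$-excess'' of each post-Lie axiom under this splitting is captured exactly by the remaining \mysymbol axiom.
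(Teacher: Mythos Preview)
Your argument for part (b) is correct and matches the paper's approach: both use DPSPL4 together with its $(x\leftrightarrow y)$-swap to show that (pl2 LHS $-$ pl2 RHS) $=$ (DPSPL5 LHS $-$ DPSPL5 RHS).

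Part (a), however, has a genuine gap. Write $S(a,b):=a\lhd b+b\lhd a$. Your subtraction correctly yields (after relabeling)
\[
S(y,[x,z])=[x,\,S(y,z)].
\]
But this is only the \emph{equality of the two expressions} appearing in DPSPL2; it is not DPSPL2, which asserts that \emph{both} expressions vanish:
\[
[x,\,S(y,z)]=S(y,[x,z])=0.
\]
So your argument proves DPSPL2 $\Rightarrow$ pl1 (given DPSPL3) cleanly, but the converse direction pl1 $\Rightarrow$ DPSPL2 is not established: from the identity $S(x,[y,z])=[y,S(x,z)]$ holding for all $x,y,z$, you still have to deduce that each side is zero. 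The paper closes this gap with a cyclic-sum trick: using the symmetry of $S$ and antisymmetry of $[-,-]$, one writes three instances $S(x,[y,z])=S(z,[y,x])$, $S(x,[z,y])=S(y,[z,x])$, $S(y,[x,z])=S(z,[x,y])$ and adds them to obtain $2\,S(y,[x,z])=0$, hence $S(y,[x,z])=0$ in characteristic $\neq 2$; the other vanishing then follows from the original identity. Without this extra step your biconditional in (a) is only half proved.
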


\begin{proof}
(\ref{it:t1}). Let $x,y,z \in A$. Suppose that Eq.~\eqref{DPSPL2}
holds. Then we have
\begin{align*}
 x \circ [y, z]  &\hspace{0.1cm}=   x \rhd [y,z]  + x \lhd [y,z]  \overset{\eqref{DPSPL2}}{=}   x \rhd [y,z]  - [y,z]  \lhd x   \\
 &\overset{\eqref{DPSPL3}}{=}   [x \rhd y + x \lhd y,z]+[y, x \rhd z - z \lhd x  ] \overset{\eqref{DPSPL2}}{=}   [x \circ y, z] +   [y, x \circ  z].
\end{align*}
Thus  Eq.~(\mref{pl1}) holds. Conversely, suppose that
Eq.~\eqref{pl1} holds. Then we have
\begin{eqnarray*}
 x \rhd [y,z]  +  x \lhd [y,z]  &\overset{\eqref{pl1}}{=}&  [x \rhd y + x \lhd y,z]+[y, x \rhd z +  x \lhd  z ]\\
 &\overset{\eqref{DPSPL3}}{=}& x \rhd [y,z]  - [y,z]  \lhd x -[y, x \rhd z - z \lhd x  ]+[y, x \rhd z +  x \lhd  z ]\\
  &=& x \rhd [y,z]  - [y,z]  \lhd x +[y,  z \lhd x +  x \lhd  z  ].
\end{eqnarray*}
Thus we obtain
\begin{align*}
x \lhd [y,z] +  [y,z]\lhd x =  [y,    x \lhd  z  + z \lhd  x ].
\end{align*}
Therefore we have \vsc
\begin{eqnarray*}
&&x \lhd [y,z] +  [y,z]\lhd x=z \lhd [y,x] + [y,x]\lhd z,\\
&&x \lhd [z,y] +  [z,y]\lhd x=y \lhd [z,x] + [z,x]\lhd y,\\
&& y \lhd [x,z] + [x,z]\lhd y=z \lhd [x,y] + [x,y]\lhd z.
\end{eqnarray*}
Adding the above equations, we have
 $2(y \lhd [x,z] +  [x,z]\lhd y) = 0$ and hence
Eq.~(\mref{DPSPL2}) holds.

(\ref{it:t2}). Let $x,y,z \in A$. By Eq.~\eqref{DPSPL4}, we have
\vsb
\begin{eqnarray*}
x \rhd( y \lhd z)- y \rhd( x \lhd z)&=& (x \rhd y - y \lhd x-y\rhd
x+x\lhd y)\lhd z+y\lhd (x\circ z)-x\lhd (y\circ z)\\
&\mbox{}& - [x,y \lhd z] +[y,x\lhd z]\\
&=&(\{x,y\}-[x,y])\lhd z+y\lhd (x\circ z)-x\lhd (y\circ z) - [x,y
\lhd z] +[y,x\lhd z].
\vsd
\end{eqnarray*}
\vsb
Hence we have 
{\small\begin{eqnarray*} &&\{x,y\} \rhd  z - x \rhd
( y \rhd z  ) +y \rhd (x \rhd z )  -
[y,x \lhd z] +  [x, y \lhd z] + [x,y]\lhd z\\
&&=\{x,y\} \rhd  z - x \rhd ( y \rhd z  ) +y \rhd (x \rhd z )-(x
\rhd( y \lhd z)- y \rhd( x \lhd z)-\{x,y\}\lhd z-y\lhd (x\circ
z)+x\lhd (y\circ z))\\
&&=\{x,y\}\circ z-x\rhd (y\circ z)+y\rhd (x\circ z)+y\lhd (x\circ
z)-x\lhd (y\circ z)\\
&&=\{x,y\}\circ z -   x \circ ( y \circ z) +y \circ (x \circ  z).
\end{eqnarray*}}
Thus Eq.~(\mref{DPSPL5}) holds if and only if Eq.~(\mref{pl2})
holds.
\end{proof}

Therefore we have the following consequence.
\vsb
\begin{cor}\label{co:aa} Let  $(A, \rhd,\lhd,[-,-])$ be a   \mysymbol algebra.
Define a binary operation $\circ$ by Eq.~\eqref{hpla}. Then
$(A,\circ,[-,-])$ is a  post-Lie algebra. It is   called the
associated  \textbf{horizontal post-Lie algebra} of  $(A,
\rhd,\lhd,[-,-])$ and  $(A, \rhd,\lhd,[-,-])$ is called a
\textbf{compatible \mysymbol algebra} structure on the post-Lie
algebra $(A,\circ,[-,-])$.
\end{cor}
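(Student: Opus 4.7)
The plan is to derive this corollary as an immediate consequence of Lemma \ref{lem:equ}, since the key technical work is already done there. Given a \mysymbol algebra $(A,\rhd,\lhd,[-,-])$, all five defining identities \eqref{DPSPL1}--\eqref{DPSPL5} hold by hypothesis, and $(A,[-,-])$ is already a Lie algebra. To conclude that $(A,\circ,[-,-])$ with $\circ$ defined by \eqref{hpla} is a post-Lie algebra, I only need to verify the two post-Lie axioms \eqref{pl1} and \eqref{pl2}.

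First I would apply Lemma \ref{lem:equ}(\ref{it:t1}): since \eqref{DPSPL3} holds, the equivalence stated there shows that \eqref{DPSPL2} (which is part of the hypothesis) is equivalent to \eqref{pl1}. Hence \eqref{pl1} holds. Next I would invoke Lemma \ref{lem:equ}(\ref{it:t2}): since \eqref{DPSPL4} holds, the analogous equivalence tells us that \eqref{DPSPL5} is equivalent to \eqref{pl2}, so \eqref{pl2} holds. Together with the Lie algebra structure on $[-,-]$, this gives both axioms of Definition \ref{def1}, establishing that $(A,\circ,[-,-])$ is indeed a post-Lie algebra.

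The naming conventions (\emph{horizontal post-Lie algebra} and \emph{compatible \mysymbol algebra} structure) are then simply definitions attached to this construction, requiring no verification. Since the content of the corollary is essentially a repackaging of Lemma \ref{lem:equ}, there is no real obstacle; the only thing to note is that the sub-adjacent Lie algebra $\{-,-\}$ from \eqref{eq:curbra} matches the expected sub-adjacent Lie algebra of the horizontal post-Lie algebra via \eqref{eq5}, which follows directly from expanding $x\circ y - y\circ x + [x,y]$ using \eqref{hpla}. Thus the proof reduces to a one-line citation of Lemma \ref{lem:equ}.
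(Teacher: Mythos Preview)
Your proposal is correct and matches the paper's approach exactly: the paper presents this corollary with no separate proof, merely writing ``Therefore we have the following consequence'' immediately after Lemma~\ref{lem:equ}, and your argument spells out precisely the intended deduction from parts (\ref{it:t1}) and (\ref{it:t2}) of that lemma.
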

\vse

\begin{pro}\mlabel{pspleuqvi} Let  $(A, \rhd,\lhd,[-,-])$ be a   \mysymbol algebra.
\begin{enumerate}
 \item\mlabel{it:b2} Define a binary
operation $\bullet$ by
\vsb
\begin{equation}
x \bullet   y  :=x \rhd y - y \lhd x, \quad \forall x,y\in A. \mlabel{vpla}
\end{equation}
Then  $(A,\bullet,[-,-])$ is a  post-Lie algebra. It is   called
the associated  \textbf{vertical post-Lie algebra} of  $(A,
\rhd,\lhd,[-,-])$ and  $(A, \rhd,\lhd,[-,-])$ is called a
\textbf{compatible \mysymbol algebra} structure on the post-Lie
algebra $(A,\bullet,[-,-])$.
\item \mlabel{it:b3} The associated horizontal post-Lie algebra  $(A, \circ,[-,-])$ and vertical post-Lie algebra   $(A, \bullet,[-,-])$ have the same sub-adjacent Lie algebra $(\mathfrak{g}(A),\{-,-\})$ with
    $\{-,-\}$ defined by Eq.~\eqref{eq:curbra}.
It is called the {\bf sub-adjacent Lie algebra} of $(A, \rhd,\lhd,[-,-])$.
\end{enumerate}
\end{pro}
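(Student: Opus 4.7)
The plan is to verify in part (a) that $(A, \bullet, [-,-])$ satisfies the two post-Lie axioms \meqref{pl1} and \meqref{pl2}, and then to deduce part (b) by direct comparison of sub-adjacent brackets. A useful preliminary observation is that
\[
x \bullet y - x \circ y = (x \rhd y - y \lhd x) - (x \rhd y + x \lhd y) = -(x \lhd y + y \lhd x),
\]
which is symmetric in $x$ and $y$. Consequently $x \bullet y - y \bullet x = x \circ y - y \circ x$, so once (a) is established, Proposition~\mref{newLie} and \meqref{eq:curbra} give $x \bullet y - y \bullet x + [x,y] = x \circ y - y \circ x + [x,y] = \{x,y\}$, which combined with Corollary~\mref{co:aa} proves (b).

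To verify \meqref{pl1} for $\bullet$, I would compare the desired identity $x \bullet [y, z] = [x \bullet y, z] + [y, x \bullet z]$, once expanded via the definition of $\bullet$, with \meqref{DPSPL3}. The discrepancy between the two reduces to $-[x \lhd y + y \lhd x, z]$, and this vanishes because $x \lhd y + y \lhd x$ is central in $(A, [-,-])$ by the first equation of \meqref{DPSPL2}.

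The main obstacle is \meqref{pl2} for $\bullet$. The plan is to expand the left-hand side as $\{x, y\} \bullet z = \{x, y\} \rhd z - z \lhd \{x, y\}$ and apply \meqref{DPSPL5} to the first summand. For the right-hand side $x \bullet (y \bullet z) - y \bullet (x \bullet z)$, I would apply \meqref{DPSPL4} with two substitutions, namely swapping the second and third arguments to rewrite $x \rhd (z \lhd y)$, and cyclically shifting to rewrite $y \rhd (z \lhd x)$. After this expansion, all quadratic terms of the form $(y \rhd z) \lhd x$, $(x \rhd z) \lhd y$, $(z \lhd x) \lhd y$, and $(z \lhd y) \lhd x$ cancel in pairs. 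Collecting the remaining $z \lhd(-)$ terms via \meqref{eq:curbra}, the required identity reduces to
\[
[x, z \lhd y + y \lhd z] - [y, z \lhd x + x \lhd z] + \bigl(z \lhd [x, y] + [x, y] \lhd z\bigr) = 0.
\]
Both commutator brackets vanish by the first equation of \meqref{DPSPL2}, and the parenthesized sum vanishes by the second equation of \meqref{DPSPL2} applied with a relabeling that yields $[x, y] \lhd z = - z \lhd [x, y]$. The only real difficulty is the bookkeeping required to organize the many terms produced by the two applications of \meqref{DPSPL4}; the algebraic content itself is a clean consequence of \meqref{DPSPL2} and \meqref{DPSPL5}.
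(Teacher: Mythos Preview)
Your proposal is correct and follows essentially the same approach as the paper: the paper's proof of (\mref{it:b2}) simply says ``similar to the sufficient part of Lemma~\mref{lem:equ}'', and what you have written is precisely that similar argument spelled out---using \meqref{DPSPL3} together with \meqref{DPSPL2} for the first post-Lie axiom, and two instances of \meqref{DPSPL4} together with \meqref{DPSPL5} and \meqref{DPSPL2} for the second. Your proof of (\mref{it:b3}) via the symmetry of $x\bullet y - x\circ y$ is exactly the paper's argument as well.
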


\begin{proof}
(\mref{it:b2}). It follows from a similar proof as the one for the
sufficient part of Lemma~\ref{lem:equ}.

(\mref{it:b3}). It follows from Proposition~\mref{newLie},
Corollary~\ref{co:aa}, Item~(\ref{it:b2})
 and the equation
 \vsb
    \begin{align*}
\{x,y\} &= x \circ y   - y \circ  x + [x,y]  =  x \bullet y   - y \bullet  x + [x,y]  \\
&= x \rhd y + x \lhd y  - y \rhd x - y \lhd x + [x,y]\quad \forall x,y\in A.
\qedhere
\end{align*}
\end{proof}
\vsc
\begin{ex}\mlabel{ex:ldl}
There exists a  compatible \mysymbol algebra  structure
$(\mathfrak{s l}(2, \mathbb{C}),\rhd,\lhd,[-,-])$ on the post-Lie
algebra  $(\mathfrak{s l}(2, \mathbb{C}),\circ,[-,-])$ in Example
\mref{splex} where  $\rhd$ and $\lhd$ are given by
\small{
\begin{eqnarray*} e_{1} \rhd  e_{1} &=&
0,\;\; e_{2} \rhd  e_{1}= e_3, \;\; e_{3} \rhd  e_{1}=-e_2, \;\;
e_{1} \rhd  e_{2} =  \frac{\mathrm{i}}{2} e_2 + \frac{1}{2}  e_3,\;\; e_{2} \rhd  e_{2}=-\frac{\mathrm{i}}{2} e_1,\\
e_{3} \rhd  e_{2}&=& \frac{5}{2}e_{1},\;\; e_{1} \rhd  e_{3} =
-\frac{1}{2} e_2 + \frac{\mathrm{i}}{2} e_3, \;\; e_{2} \rhd
e_{3}=-\frac{5}{2}e_{1}, \;\; e_{3} \rhd e_{3}=-
\frac{\mathrm{i}}{2} e_1,
e_{1} \lhd  e_{1} = 0,\\
 e_{2} \lhd  e_{1}&=&\frac{\mathrm{i}}{2} e_2 - \frac{1}{2}  e_3, \;\; e_{3} \lhd  e_{1}=\frac{1}{2} e_2 + \frac{\mathrm{i}}{2}  e_3,
 \;\;
e_{1} \lhd  e_{2} = -\frac{\mathrm{i}}{2} e_2 + \frac{1}{2}
e_3,\;\; e_{2} \lhd  e_{2}=0, \;\;
 e_{3} \lhd  e_{2}=-2e_{1},\\ e_{1}
\lhd e_{3} & = &-\frac{1}{2} e_2 - \frac{\mathrm{i}}{2}  e_3, \;\;
e_{2} \lhd e_{3}=2e_{1}, \;\; e_{3} \lhd  e_{3}=0.
\end{eqnarray*}}

Note that both the associated horizontal and vertical post-Lie algebras of
$(\mathfrak{s l}(2, \mathbb{C}),\rhd,\lhd$, $[-,-])$ are
$(\mathfrak{s l}(2, \mathbb{C}),\circ,[-,-])$.
\end{ex}

\begin{pro}\mlabel{transposeDPSPL}
 Let  $(A, \rhd,\lhd,[-,-])$ be a  \mysymbol algebra. Define
\vsb
\begin{align*}
\rhd^{t}, \lhd^{t}: A \otimes A \rightarrow A, \quad x \rhd^{t} y=x \rhd y, \quad x \lhd^{t} y=-y \lhd x, \quad \forall x, y \in A .
\end{align*}
Then  $(A, \rhd^{t}, \lhd^{t},[-,-]) $ is a  \mysymbol algebra, which is called the {\bf transpose} of  $(A,  \!\rhd, \lhd,\![-,-])$. The associated   horizontal post-Lie algebra of  $(A, \rhd^{t}, \lhd^{t},[-,-])$  is the associated  vertical post-Lie algebra  $(A, \bullet,[-,-])$ of  $(A, \rhd, \lhd,[-,-])$  and the associated  vertical post-Lie algebra of  $(A, \rhd^{t}, \lhd^{t},$ $[-,-])$  is the associated horizontal post-Lie algebra  $(A, \circ,[-,-])$  of  $(A, \rhd, \lhd,[-,-])$, that is,
\vsb
\begin{align*}
\bullet^{t}=\circ, \quad \circ^{t}=\bullet \text {. }
\end{align*}
\end{pro}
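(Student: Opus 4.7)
The proposition makes two claims: that the transpose $(A, \rhd^t, \lhd^t, [-,-])$ satisfies the \mysymbol axioms \meqref{DPSPL1}--\meqref{DPSPL5}, and that the associated horizontal and vertical post-Lie structures swap under transposition.

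The plan is to begin with the swap, which is immediate. Applying \meqref{hpla} and \meqref{vpla} to the transposed operations gives
\[
x \circ^t y = x \rhd^t y + x \lhd^t y = x \rhd y - y \lhd x = x \bullet y,
\]
\[
x \bullet^t y = x \rhd^t y - y \lhd^t x = x \rhd y + x \lhd y = x \circ y,
\]
so $\circ^t = \bullet$ and $\bullet^t = \circ$. As a byproduct, the sub-adjacent bracket is invariant under transpose, $\{-,-\}^t = \{-,-\}$, and one checks that the transpose is an involution on the $\lhd$-component.

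For the axiom verification, I would leverage Lemma~\mref{lem:equ}. Since $\circ^t = \bullet$ and $(A, \bullet, [-,-])$ is a post-Lie algebra by Proposition~\mref{pspleuqvi}, Eqs.~\meqref{pl1} and \meqref{pl2} hold for the $\circ^t$ of the transpose. By parts (a) and (b) of Lemma~\mref{lem:equ}, once \meqref{DPSPL3} and \meqref{DPSPL4} are established for the transpose, the equations \meqref{DPSPL2} and \meqref{DPSPL5} for the transpose follow automatically. Hence the task reduces to verifying \meqref{DPSPL1}, \meqref{DPSPL3}, and \meqref{DPSPL4} directly for $(A, \rhd^t, \lhd^t, [-,-])$.

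Each of these three is a direct substitution. After replacing $a \lhd^t b$ by $-b \lhd a$ and $a \rhd^t b$ by $a \rhd b$, the target identity reduces to one in the original operations, which is derived from the corresponding original axiom together with the antisymmetries $[a, b \lhd c + c \lhd b] = 0$ and $[a,c] \lhd b + b \lhd [a,c] = 0$ provided by \meqref{DPSPL2}. For instance, \meqref{DPSPL1} for the transpose becomes
\[
[y,z] \lhd x = z \lhd [x,y] + y \lhd [z,x],
\]
which follows from the original \meqref{DPSPL1} after using \meqref{DPSPL2} to flip the signs of $[y,z]\lhd x$, $[x,y]\lhd z$, and $[z,x]\lhd y$. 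I expect \meqref{DPSPL4} for the transpose to be the main obstacle: its expansion contains several mixed iterated products of $\rhd$ and $\lhd$, and after the substitution one must reassemble the terms via repeated appeals to the \meqref{DPSPL2}-type antisymmetries and the original \meqref{DPSPL4}. This bookkeeping is mechanical but sign-sensitive, and is the only place in the proof where genuine computation is needed; the other two reductions are short applications of the same principle.
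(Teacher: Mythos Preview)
Your proposal is correct. The paper's proof is the single line ``It is straightforward,'' so your argument is a valid (and somewhat economical) elaboration: rather than checking all five axioms from scratch, you use Proposition~\mref{pspleuqvi} and Lemma~\mref{lem:equ} to reduce to \meqref{DPSPL1}, \meqref{DPSPL3}, \meqref{DPSPL4}, which is a legitimate shortcut.
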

\begin{proof}
 It is straightforward.
 \end{proof}
\vsb
Thus it is
enough to only consider the associated horizontal post-Lie algebra of a \mysymbol algebra.
\vsb

\subsection{Partial splitting of operations of post-Lie algebras
and \mysymbol algebras}

We interpret \mysymbol algebras in terms of representations of
post-Lie algebras.

Recall that a {\bf representation of a Lie algebra}  $(\mathfrak{g},[-,-])$  is a pair  $(V; \rho )$, where $V$  is a vector space and  $\rho: \mathfrak{g} \rightarrow \mathfrak{gl}(V)$ is a linear map satisfying  $\rho([x, y])=[\rho(x), \rho(y)]$ for all $x, y \in \mathfrak{g}$.

\begin{defi}\mcite{TBGS} Let  $(A, \circ,[-,-])$  be a post-Lie algebra. A \textbf{representation} of $(A, \circ,[-,-])$  is a quadruple $(V;l, r, \rho)$ where $V$ is a vector space and $l,r, \rho : A\rightarrow {\rm End}_{\mathbb F}(V)$ are linear maps, such that $(V;\rho)$ is a representation of the Lie algebra $(A,[-,-])$ and $l, r$ satisfy the following equations.
\begin{align}
\rho(x \circ y) &=  l(x) \rho(y) - \rho(y) l(x) , \mlabel{eq6}\\
r([x , y])&= \rho(x)r (y) - \rho(y)r (x), \mlabel{eq7}\\
r(x \circ y) &= l(x) r(y) - r(y)(l(x) -r(x)   +\rho(x)), \mlabel{eq8}\\
l(\{x,y\})&=l(x) l(y) - l(y) l(x),\quad \forall x, y \in A, \mlabel{eq9}
\end{align}
where $\{-,-\}$ is defined by
Eq.~\eqref{eq5}. Representations  $(V_1;l_1, r_1, \rho_1)$  and $(V_2;l_2, r_2,
\rho_2)$ of a post-Lie algebra $(A,\circ,[-,-])$ are
\textbf{equivalent} if there exists a linear isomorphism $\varphi:
V_{1} \rightarrow V_{2}$  such that
\begin{align*}
\varphi\left(l_{1}(x) v\right)=l_{2}(x) \varphi(v),\;
\varphi\left(r_{1}(x) v\right)=r_{2}(x) \varphi(v),\;
\varphi\left(\rho_{1}(x) v\right)=\rho_{2}(x) \varphi(v),
\;\;\forall x \in A, v \in V_{1}.
\end{align*}
\end{defi}

\begin{ex}
Let  $(A, \circ,[-,-])$  be a post-Lie algebra. Then  $(A;
L_{\circ},R_{\circ},\mathrm{ad})$  is a representation of  $(A,
\circ,[-,-])$, called the \textbf{adjoint representation} of  $(A,
\circ,[-,-])$.
\end{ex}

\begin{pro} Let  $(A, \circ, [-,-])$  be a post-Lie algebra and $V$ be a
vector space. Let $l,r,\rho :A\rightarrow {\rm End}_{\mathbb
F}(V)$ be
linear maps. Then the quadruple $(V;l,r,\rho)$ is a representation
of $(A, \circ, [-,-])$ if and only if there is a post-Lie algebra
structure on $A\oplus V$ with the binary operations
$\circ_{A\oplus V}$ and $[-,-]_{A\oplus V}$ respectively defined
by
\begin{align}
\left(x_{1}+v_{1}\right) \circ_{A \oplus V} \left(x_{2}+v_{2}\right) &= x_{1} \circ x_{2}+l\left(x_{1}\right) v_{2}+r\left(x_{2}\right) v_{1}, \mlabel{pleq11}\\
\left[x_{1}+v_{1}, x_{2}+v_{2}\right]_{A \oplus V} &=
[x_{1},x_{2}]+\rho(x_1)v_2 - \rho(x_2)v_1,\quad \forall x_{1},
x_{2} \in A, v_{1}, v_{2} \in V. \mlabel{pleq10}
\end{align}
We denote this post-Lie algebra structure on $A\oplus V$ by
$A\ltimes_{l,r,\rho}V$.
\end{pro}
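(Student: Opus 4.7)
The plan is to verify the equivalence by a direct case analysis of the axioms for the putative post-Lie structure on $A\oplus V$, organized by how many of the three arguments lie in $A$ versus $V$. Since both post-Lie axioms \eqref{pl1} and \eqref{pl2} are trilinear and since $\circ_{A\oplus V}$ and $[-,-]_{A\oplus V}$ both vanish on pure $V$-inputs by \eqref{pleq11}--\eqref{pleq10}, it suffices to check triples of type $(3,0)$ and $(2,1)$ (referring to the number of arguments from $A$ and $V$ respectively); triples of type $(1,2)$ and $(0,3)$ are trivially zero on both sides of each axiom.

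For the Lie bracket \eqref{pleq10}, antisymmetry is automatic, the Jacobi identity on $(3,0)$-triples recovers the Jacobi identity of $(A,[-,-])$, and the $(2,1)$-triples collapse to $\rho([x,y])=[\rho(x),\rho(y)]$. Hence $(A\oplus V,[-,-]_{A\oplus V})$ is a Lie algebra if and only if $(V;\rho)$ is a representation of $(A,[-,-])$, which is half of the representation conditions.

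Next I would substitute one $V$-entry into each of the three slots of \eqref{pl1} using the explicit formulas \eqref{pleq11}--\eqref{pleq10}. Placement in the first slot collapses the axiom to \eqref{eq7}, while placement in the second or third slot yields \eqref{eq6} (the two placements agreeing up to relabeling). An analogous analysis for \eqref{pl2}: placement of the $V$-entry in the third slot yields \eqref{eq9} (with the sub-adjacent bracket $\{x,y\}$ appearing naturally via \eqref{eq5}), while placement in either the first or second slot yields \eqref{eq8}. The $(3,0)$-triples reproduce exactly the original post-Lie axioms for $(A,\circ,[-,-])$. Thus the entire system on $A\oplus V$ is equivalent to the conjunction of the post-Lie axioms on $A$ together with \eqref{eq6}--\eqref{eq9}, delivering both implications simultaneously.

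The main obstacle is careful bookkeeping of the coefficients in each substitution, especially for the first-slot case of \eqref{pl2}, where the combination $v\circ_{A\oplus V}y - y\circ_{A\oplus V}v + [v,y]_{A\oplus V}$ expands to $(r(y)-l(y)-\rho(y))v$ and must be matched against the expanded right-hand side to produce precisely \eqref{eq8} and no additional identity. I would handle this by running the three slot placements independently and observing that two of them yield the same equation (up to renaming of arguments) and the third yields a genuinely new one, so that no redundant conditions appear beyond \eqref{eq6}--\eqref{eq9}. As a built-in sanity check, the construction specialized to $V=A$ with $(l,r,\rho)=(L_\circ,R_\circ,\mathrm{ad})$ should recover the adjoint representation mentioned earlier.
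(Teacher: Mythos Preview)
Your direct case-by-case verification is correct: the trilinearity argument reducing to $(3,0)$ and $(2,1)$ triples is valid, and your slot-by-slot matching of the $(2,1)$ cases with Eqs.~\eqref{eq6}--\eqref{eq9} checks out (including the first-slot case of \eqref{pl2}, where the combination $r(y)-l(y)-\rho(y)$ indeed rearranges to give exactly \eqref{eq8}).

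The paper, however, takes a different route: it does not verify the axioms directly here, but instead observes that this proposition is the special case of the matched pair construction (Proposition~\ref{mppstl}) in which the second algebra $B=V$ carries the zero multiplications. In that degenerate case all ten compatibility conditions \eqref{mplo}--\eqref{mppl6} collapse, and the requirement that $(V;l,r,\rho)$ be a representation is precisely what survives. Your approach is more self-contained and elementary, requiring no forward reference to matched pairs; the paper's approach trades that for economy, since the matched pair result is needed later anyway and its own proof is declared ``straightforward'' (i.e., essentially the same case analysis you carry out, but at the level of two arbitrary post-Lie algebras rather than one algebra and one trivial module).
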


\begin{proof}
 It is a special case of the matched pair of post-Lie algebras in Proposition
 ~\mref{mppstl} when $B = V$ is equipped with the zero multiplications.
\end{proof}

Let $ V$ and $A$ be vector spaces. For a linear map  $\rho: A
\rightarrow \operatorname{End}_{\mathbb{F}}(V)$, define a linear
map $ \rho^{*}: A \rightarrow
\operatorname{End}_{\mathbb{F}}\left(V^{*}\right)$  by
\begin{align}
\left\langle\rho^{*}(x) v^{*}, u\right\rangle=-\left\langle v^{*},
\rho(x) u\right\rangle, \quad \forall x \in A, u \in V, v^{*} \in
V^{*},\mlabel{dualmap}
\end{align}
where $\left\langle -, -\right\rangle$ is the standard  pairing
between $V$ and $V^*$.

\begin{pro}\mlabel{dpsplrep}
Let  $(A,[-,-])$  be a Lie algebra and $\rhd,\lhd$ be binary operations on  $A$.  Define a
binary operation $\circ$ by Eq.~\eqref{hpla}. Then $(A,
\rhd,\lhd,[-,-])$  is a  \mysymbol algebra if and only if
 $(A, \circ,[-,-])$ is a post-Lie algebra of which  $(A^{*}; L_{\rhd}^{*}-R_{\lhd}^{*},-R_{\lhd}^{*}, \mathrm{ad}^{*})$ is a representation.
\end{pro}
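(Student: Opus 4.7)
The plan is to unpack each of the four representation axioms~(\mref{eq6})–(\mref{eq9}) for the quadruple $(A^{*}; L_{\rhd}^{*}-R_{\lhd}^{*},-R_{\lhd}^{*}, \mathrm{ad}^{*})$ into an identity in $A$, by applying it to an arbitrary $v^{*}\in A^{*}$, pairing with an arbitrary $z\in A$, and invoking~\meqref{dualmap}. Each translation is expected to match one of the defining axioms of a \mysymbol algebra (modulo relabeling) or a post-Lie axiom for the vertical operation $\bullet$ of Proposition~\mref{pspleuqvi}, allowing the proposition to be concluded from Corollary~\mref{co:aa}, Proposition~\mref{pspleuqvi}, and Lemma~\mref{lem:equ}.

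First I would record the building-block pairings $\langle (L_{\rhd}^{*}(x)-R_{\lhd}^{*}(x))v^{*},z\rangle=-\langle v^{*},x\rhd z-z\lhd x\rangle$, $\langle -R_{\lhd}^{*}(x)v^{*},z\rangle=\langle v^{*},z\lhd x\rangle$, and $\langle \mathrm{ad}^{*}(x)v^{*},z\rangle=-\langle v^{*},[x,z]\rangle$. Direct unwinding then shows that~(\mref{eq7}) becomes $z\lhd[x,y]=[z,x]\lhd y+[y,z]\lhd x$, which is~(\mref{DPSPL1}) after relabeling; that~(\mref{eq6}) becomes~(\mref{DPSPL3}); and that~(\mref{eq8}), after the telescoping $l(x)-r(x)+\rho(x)=L_{\rhd}^{*}(x)+\mathrm{ad}^{*}(x)$, becomes $x\rhd(z\lhd y)=(x\rhd z-z\lhd x)\lhd y+z\lhd(x\rhd y+x\lhd y)-[x,z\lhd y]$, which is~(\mref{DPSPL4}) after swapping $y\leftrightarrow z$. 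For~(\mref{eq9}), since $\langle (L_{\rhd}^{*}(x)-R_{\lhd}^{*}(x))v^{*},z\rangle=-\langle v^{*},x\bullet z\rangle$, dualization shows that~(\mref{eq9}) is equivalent to $\{x,y\}\bullet z=x\bullet(y\bullet z)-y\bullet(x\bullet z)$, i.e., axiom~(\mref{pl2}) for $(A,\bullet,[-,-])$.

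With these correspondences in hand, both implications follow. For the forward direction, Corollary~\mref{co:aa} gives the post-Lie algebra $(A,\circ,[-,-])$; axioms~(\mref{DPSPL1}), (\mref{DPSPL3}), (\mref{DPSPL4}) supply~(\mref{eq7}), (\mref{eq6}), (\mref{eq8}); Proposition~\mref{pspleuqvi} provides the vertical post-Lie algebra $(A,\bullet,[-,-])$, whose~(\mref{pl2}) is~(\mref{eq9}); and $\mathrm{ad}^{*}$ is a Lie representation because $[-,-]$ is a Lie bracket. For the reverse direction, (\mref{eq7}), (\mref{eq6}), (\mref{eq8}) yield~(\mref{DPSPL1}), (\mref{DPSPL3}), (\mref{DPSPL4}); then part~(a) of Lemma~\mref{lem:equ}, combined with~(\mref{pl1}), delivers~(\mref{DPSPL2}), while part~(b), combined with~(\mref{pl2}), delivers~(\mref{DPSPL5}), so all five defining axioms of a \mysymbol algebra hold.

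The main obstacle will be the bookkeeping in the dualization of~(\mref{eq8}): one must propagate the sign rule $\langle T^{*}v^{*},z\rangle=-\langle v^{*},Tz\rangle$ carefully through a composition of three dual maps and notice the cancellation $l(x)-r(x)+\rho(x)=L_{\rhd}^{*}(x)+\mathrm{ad}^{*}(x)$ in order to recover the precise shape of~(\mref{DPSPL4}). A useful consistency check is that Proposition~\mref{pspleuqvi} forces~(\mref{eq9}) to be automatic given the remaining conditions, so it is not an independent requirement on either side of the equivalence.
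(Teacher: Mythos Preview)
Your proof is correct and follows essentially the same route as the paper: both dualize \meqref{eq6}--\meqref{eq8} to obtain \meqref{DPSPL3}, \meqref{DPSPL1}, \meqref{DPSPL4}, and both use Lemma~\mref{lem:equ} together with the post-Lie axioms \meqref{pl1}--\meqref{pl2} for the reverse implication. The one place where you diverge is the forward verification of \meqref{eq9}: the paper does a direct computation showing the pairing vanishes via \meqref{pl2}, \meqref{DPSPL4}, \meqref{pl1}, \meqref{DPSPL3}, whereas you observe that \meqref{eq9} dualizes exactly to axiom~\meqref{pl2} for the vertical post-Lie algebra $(A,\bullet,[-,-])$ and invoke Proposition~\mref{pspleuqvi}. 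Your route is a bit more conceptual and avoids the explicit chain of substitutions, at the cost of depending on Proposition~\mref{pspleuqvi}; the paper's route is self-contained but heavier computationally. Both are valid, and your closing remark that \meqref{eq9} is not an independent condition matches the paper's own Remark following the proof.
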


\begin{proof} It is known that $(A^*; \ad^*)$ is a representation of the Lie algebra  $(A,[-,-])$. We rewrite  Eqs.~(\mref{eq6})--\eqref{eq9} in the case that $l=L_{\rhd}^{*}-R_{\lhd}^{*}, r=-R_{\lhd}^{*}$ and
$\rho=\mathrm{ad}^{*}$, which will be labeled by
Eqs.~(\mref{eq6}$^\prime$)--(\ref{eq9}$^\prime$) respectively.
Define a binary operation $\bullet$ by Eq.~\eqref{vpla}.  Let
$x,y,z \in A$, $a^* \in A^*$. Then we have
\begin{align*}
\langle (\mathrm{ad}^{*}(x \circ y)-[(L_{\rhd}^{*}-R_{\lhd}^{*})(x),\mathrm{ad}^{*}(y)])a^*, z \rangle &=  \langle a^*, ( [(L_{\rhd}-R_{\lhd})(x),\mathrm{ad}(y)]-\mathrm{ad}(x \circ y))z \rangle \\
&=   \langle a^*, -[x \circ y, z] + x \bullet [y,z] -  [y,x \bullet z] \rangle;\\
\langle (-R_{\lhd}^{*}([x , y])+\mathrm{ad}^{*}(x) R_{\lhd}^{*} (y) - \mathrm{ad}^{*}(y)R_{\lhd}^{*} (x))a^*, z \rangle
 &=   \langle a^*, ( R_{\lhd} ([x , y])+R_{\lhd} (y) \mathrm{ad} (x) - R_{\lhd}(x)\mathrm{ad} (y))z \rangle \\
&=  \langle a^*, z \lhd [x, y ] +   [x,z]\lhd y -  [y, z] \lhd x \rangle;
\end{align*}
\begin{eqnarray*}
&&\langle \big(-R_{\lhd}^{*}(x \circ y) + (L_{\rhd}^{*}-R_{\lhd}^{*})(x) R_{\lhd}^{*}(y) - R_{\lhd}^{*}(y)(L_{\rhd}^{*}(x)   +\mathrm{ad}^{*}(x))\big)a^*, z \rangle \\
 &&=  \langle a^*, \big( R_{\lhd} (x \circ y) +  R_{\lhd} (y)(L_{\rhd} -R_{\lhd} )(x) - (L_{\rhd} (x)   +\mathrm{ad} (x))R_{\lhd}(y)\big)z
 \rangle\\
&&=  \langle a^*,  z \lhd (x \circ y)    + (x \bullet z) \lhd y
-x \rhd (z \lhd y) - [x,z \lhd y]\rangle.
\end{eqnarray*}
Hence Eq.~\eqref{DPSPL3} $\Longleftrightarrow$
Eq.~(\ref{eq6}$^\prime$); Eq.~\eqref{DPSPL1} $\Longleftrightarrow$
Eq.~(\ref{eq7}$^\prime$); Eq.~\eqref{DPSPL4} $\Longleftrightarrow$
Eq.~(\ref{eq8}$^\prime$).

$(\Longleftarrow)$ Suppose that $(A, \circ,[-,-])$ is a post-Lie
algebra and (\mref{eq6}$^\prime$)--(\mref{eq9}$^\prime$) hold. In
particular, Eqs.~(\mref{DPSPL1}), (\mref{DPSPL3}) and
(\mref{DPSPL4}) hold. Hence by Lemma~\ref{lem:equ}, $(A,
\rhd,\lhd,[-,-])$  is a \mysymbol algebra.

$(\Longrightarrow)$ Suppose that $(A, \rhd,\lhd,[-,-])$ is a
\mysymbol algebra. By Corollary~\ref{co:aa}, $(A, \circ,[-,-])$ is
a post-Lie algebra. Moreover,
Eqs.~(\mref{eq6}$^\prime$)--(\mref{eq8}$^\prime$) hold and we have
\begin{eqnarray*}
&&\langle \big((L_{\rhd}^{*}-R_{\lhd}^{*})(\{x,y\})-[(L_{\rhd}^{*}-R_{\lhd}^{*})(x),(L_{\rhd}^{*}-R_{\lhd}^{*})(y)]\big)a^*, z \rangle \\
&&\overset{\hphantom{\eqref{pl1}}}{=}  \langle a^*,  -\{x,y\} \circ z + \{x,y\}\lhd z  + x \rhd ( y \bullet z  ) -( y\bullet z  )\lhd  x -y \rhd ( x \bullet z )  +  ( x \bullet z ) \lhd y+ z \lhd \{x,y\}\rangle\\
&&\overset{ \eqref{pl2} }{=}  \langle a^*,  - x\circ (y \circ z)+ y\circ (x \circ z) + \{x,y\}\lhd z  + x \rhd ( y \bullet z  ) -( y\bullet z  )\lhd  x -y \rhd ( x \bullet z )\\
&&\hspace{1.11cm}  +  ( x \bullet z ) \lhd y+ z \lhd \{x,y\}\rangle\\
&&\overset{\eqref{DPSPL4}}{=}\langle  a^*,   [x,z\lhd y] +
[x,y\lhd z]-[y, x \lhd z]-[y, z \lhd x] + z \lhd [y,x] +    [y,x]
\lhd z \rangle \overset{\eqref{pl1},\eqref{DPSPL3}} {=}    0.
\end{eqnarray*}
That is, Eq.~(\ref{eq9}$^\prime$) holds automatically. Hence
$(A^{*}; L_{\rhd}^{*}-R_{\lhd}^{*},-R_{\lhd}^{*},
\mathrm{ad}^{*})$ is a representation of  $(A, \circ,[-,-])$.
\end{proof}

\begin{rmk}
In fact, similarly, if Eqs.~(\mref{eq6}$^\prime$),
(\mref{eq8}$^\prime$) and (\ref{pl1}) hold, then
(\mref{eq9}$^\prime$) hold if and only if Eq.~(\ref{DPSPL5}), or
equivalently, Eq.~(\ref{pl2}) holds. Hence $(A, \rhd,\lhd,[-,-])$
is a \mysymbol algebra if and only if Eqs.~(\ref{pl1}) (or
equivalently, ~(\ref{DPSPL2})) and
(\mref{eq6}$^\prime$)--(\mref{eq9}$^\prime$) hold.
\end{rmk}

There is a theory of splitting the operations of algebras in terms
of the successors of operads \mcite{BBGN}. Explicitly, a binary
operation $\circ$ on a vector space $A$ is split into a sum of
two binary operations $\succ,\prec$ such that
$(A;L_\succ,R_\prec)$ gives a natural module structure
(representation) of the algebra $(A,\circ)$. In this sense, the
operad of the algebras with operations $\succ,\prec$ is the
successor of the operad of the algebras with the operation
$\circ$. For example, the operad of L-dendriform algebras is the
successor of the operad of pre-Lie algebras, that is, the operation of a pre-Lie
algebra is split into the sum of two operations defining an
L-dendriform algebra as above. In this sense, one can also define
the (ordinary) splitting of the operations of a post-Lie algebra $(A, \circ,[-,-])$
to be an algebra $(A, \rhd,\lhd,\star)$ satisfying
$$x\circ y=x\rhd y+x\lhd y,\;\;[x,y]=x\star y-y\star x,\;\;\forall
x,y\in A,$$ and $(A;L_\rhd,R_\lhd, L_\star)$ is a representation
of the post-Lie algebra $(A, \circ,[-,-])$. Moreover, the
operad of such algebras is the successor of the operad of post-Lie
algebras. Such an algebra
$(A, \rhd,\lhd,\star)$ is called a {\bf pre-post-Lie algebra}.
Note that in this case, both the operations $\circ$ and $[-,-]$
are split.

\begin{rmk}
\mlabel{r:pppl}
The \mysymbol algebra has the following special properties that is worth noting.
\begin{enumerate}
\item Proposition~\mref{dpsplrep} demonstrates a new kind of
splitting property of a post-Lie algebra, that is, the split of
$(A, \circ,[-,-])$ into the \mysymbol algebra $(A,
\rhd,\lhd,[-,-])$ is “partial” in the sense that, unlike the pre-post-Lie algebra just defined above, the operation
$\circ$ splits into the sum of the operations $\rhd$ and $\lhd$,
whereas the Lie bracket $[-,-]$ remains unchanged.

\item The characterization of \mysymbol algebras is given in terms
of post-Lie algebras with the specific representations on the dual
spaces instead of the representations on the underlying vector
spaces themselves. Note that if there is a well-constructed dual
representation for any representation of an algebra (such an
algebra is called proper in \mcite{Ku}) such as pre-Lie algebras,
then obviously, the splitting of the operations can be
characterized in terms of the algebras with the specific
representations on the underlying vector spaces themselves as well
as equivalently their dual representations (on the dual spaces).
Unfortunately, there is not a characterization of a  \mysymbol
algebra in terms of a post-Lie algebra with a specific
representations on the underlying vector space itself, due to the
fact that  no  dual representation of a representation of a
post-Lie algebra has been found.
\end{enumerate}
\end{rmk}

Similar to the $\mathcal O$-operators (or relative Rota-Baxter
operators) giving a further interpretation of the usual
splitting of operations \mcite{BBGN}, we give an
interpretation of \mysymbol algebras as the partial splitting of
operations of post-Lie algebras in terms of analogs of
$\mathcal O$-operators. However, note that these operators are
defined on the dual spaces of the representation spaces instead of
the representation spaces themselves.

\begin{defi}
Let  $(V; l,r,\rho)$ be a representation of a post-Lie algebra
$(A, \circ, [-,-])$. A linear map  $T: V^{*} \rightarrow A $ is
called a   {\bf \dpdop} on $(A, \circ, [-,-])$ associated
to $(V;l,r,\rho)$ if
\begin{align}
            T(u^{*})\circ T(v^{*}) &= T\Big((l^{*}-r^{*})\big(T(u^{*})\big)v^{*}-r^{*}\big(T(v^{*})\big)u^{*}\Big),\mlabel{ldlo1}\\
[T(u^{*}), T(v^{*})] &= T \Big(\rho^{*}\big(T(u^{*})\big)v^{*}\Big) = -T\Big(\rho^{*}\big(T(v^{*})\big) u^{*}\Big),\;\forall u^{*},v^{*}\in V^{*}.\mlabel{ldlo2}
\end{align}
In addition, a \dpdop is called {\bf strong} if $T$ satisfies
\begin{align}
& \rho^{*}\big( T(u^{*}) \big) v^{*}  = -\rho^{*}\big( T(v^{*}) \big) u^{*}, \mlabel{sdpo1}\\
&\rho^{*}\big(T(u^{*})\big) \Big( r^{*}(T(v^{*}))w^{*}  +  r^{*}(T(w^{*}))v^{*} \Big)  =   r^{*}\Big( [T(u^{*}), T(w^{*})] \Big)v^{*}  +   r^{*}\big( T(v^{*}) \big) \rho^{*} ( T(u^{*})  ) w^{*}  = 0 , \mlabel{sdpo2} \\
&\rho^{*}\big([T(u^{*}), T(v^{*})]\big) w^{*}  +\rho^{*}\big([T(v^{*}), T(w^{*})]\big) u^{*}+\rho^{*}\big([T(w^{*}), T(u^{*})]\big) v^{*}=0,\; \forall u^{*}, v^{*}, w^{*} \in V^{*}. \mlabel{sdpo3}
\end{align}
\end{defi}

\begin{pro}\mlabel{pro:dual ldl}
Let  $(V; l,r,\rho)$ be a representation of a post-Lie algebra
$(A, \circ, [-,-])$. Let $T:V^{*}\rightarrow A$ be a \dpdop on
$(A, \circ, [-,-])$ associated to  $(V; l,r,\rho)$. Define three
binary operations
$\rhd_{V^{*}},\lhd_{V^{*}},[-,-]_{V^{*}}:V^{*}\otimes
V^{*}\rightarrow V^{*}$ by
\begin{align}\mlabel{eq:dual mul}
    u^{*}\rhd_{V^{*}}v^{*} \!:=\!(l^{*}-r^{*})\big(T(u^{*})\big)v^{*},\;
    u^{*}\lhd_{V^{*}}v^{*} \!:=\! -r^{*}\big(T(v^{*})\big)u^{*},\; [u^{*},v^{*}]_{V^{*}}   \!:=\!   \rho^{*}\big(T(u^{*})\big)
    v^{*},
\end{align}
for all $u^{*},v^{*}\in V^{*}$. Then $(V^{*},\rhd_{V^{*}},\lhd_{V^{*}},[-,-]_{V^*})$ is a
\mysymbol algebra if and only if $T$ is strong.
\end{pro}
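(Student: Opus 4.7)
The plan is to verify the defining axioms of a \mysymbol algebra on $V^*$ by direct computation, in both directions, using the constraints \eqref{ldlo1}--\eqref{ldlo2} and, when asserted, the strong conditions \eqref{sdpo1}--\eqref{sdpo3}. The central organizing observation is that \eqref{ldlo2} yields the homomorphism relation $T([u^{*},v^{*}]_{V^{*}}) = [T(u^{*}),T(v^{*})]$, which will be used repeatedly to push identities involving $[-,-]_{V^{*}}$ back to the Lie bracket on $A$.

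For the necessity direction, assuming $(V^{*},\rhd_{V^{*}},\lhd_{V^{*}},[-,-]_{V^{*}})$ is a \mysymbol algebra, the three strong conditions are recovered as follows. The antisymmetry of $[-,-]_{V^{*}}$ is precisely $\rho^{*}(T(u^{*}))v^{*}=-\rho^{*}(T(v^{*}))u^{*}$, i.e., \eqref{sdpo1}. The Jacobi identity for $[-,-]_{V^{*}}$, after expanding one bracket via the definition in \eqref{eq:dual mul} and the other via the homomorphism relation derived from \eqref{ldlo2}, is exactly the cyclic sum \eqref{sdpo3}. Finally, unfolding \eqref{DPSPL2} using the definitions of $\lhd_{V^{*}}$ and $[-,-]_{V^{*}}$ yields \eqref{sdpo2}.

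For the sufficiency direction, granted \eqref{sdpo1}--\eqref{sdpo3}, the Lie algebra axioms for $(V^{*},[-,-]_{V^{*}})$ and the axiom \eqref{DPSPL2} hold immediately. To reduce the remaining work, I would invoke Corollary \ref{co:aa} and Lemma \ref{lem:equ}: it suffices to establish \eqref{DPSPL1}, \eqref{DPSPL3}, and \eqref{DPSPL4}, since \eqref{DPSPL2} is in hand and \eqref{DPSPL5} then follows from \eqref{DPSPL4} together with post-Lie identity \eqref{pl2} applied to $(A,\circ,[-,-])$. Each of \eqref{DPSPL1}, \eqref{DPSPL3}, \eqref{DPSPL4} is verified by pairing both sides with an arbitrary $w\in V$ and applying \eqref{dualmap}, the representation axioms \eqref{eq6}--\eqref{eq9} for $(V;l,r,\rho)$, and the dual p-$\mathcal O$-operator conditions \eqref{ldlo1}--\eqref{ldlo2}; everything then collapses to a post-Lie or representation identity applied to elements of $T(V^{*})\subseteq A$.

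The principal obstacle is the bookkeeping for \eqref{DPSPL3} and \eqref{DPSPL4}, in which cross-terms involving $l^{*}$, $r^{*}$, and $\rho^{*}$ proliferate and must be regrouped before \eqref{eq6}--\eqref{eq9} can be applied. A useful organizational device is to rewrite each side as a pairing $\langle\,\cdot\,,w\rangle$ and then recognize the emerging combinations as the dual of a single post-Lie axiom on $A$, exploiting \eqref{ldlo1}--\eqref{ldlo2} to replace $T$-images of products in $V^{*}$ by products in $A$. Once the identifications are carried out, verification of the sub-adjacent relation from Proposition \ref{pspleuqvi} on $V^{*}$ provides a convenient internal consistency check confirming that the computations have been carried out correctly.
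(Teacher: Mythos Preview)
Your overall strategy matches the paper's: for necessity you read off \eqref{sdpo1} from antisymmetry, \eqref{sdpo3} from Jacobi, and \eqref{sdpo2} from \eqref{DPSPL2}; for sufficiency you verify the \mysymbol axioms on $V^*$ by pairing against elements of $V$ and using the representation axioms \eqref{eq6}--\eqref{eq9} together with \eqref{ldlo1}--\eqref{ldlo2}. This is exactly what the paper does (it computes \eqref{DPSPL1} in detail and says ``similarly'' for \eqref{DPSPL2}--\eqref{DPSPL5}).

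There is, however, a genuine gap in your proposed shortcut for \eqref{DPSPL5}. Lemma~\ref{lem:equ}(\ref{it:t2}) is a statement about a \emph{single} structure: given \eqref{DPSPL4} on $V^*$, it yields the equivalence of \eqref{DPSPL5} on $V^*$ and \eqref{pl2} on $V^*$, i.e., for $\circ_{V^*}$ and $[-,-]_{V^*}$. You cannot substitute \eqref{pl2} on $(A,\circ,[-,-])$ here. The homomorphism property $T(u^*\circ_{V^*}v^*)=T(u^*)\circ T(v^*)$ and $T([u^*,v^*]_{V^*})=[T(u^*),T(v^*)]$ only lets you conclude that \eqref{pl2} on $V^*$ holds \emph{after applying $T$}; since $T$ is not assumed injective, this does not give \eqref{pl2} on $V^*$ itself. (Compare Proposition~\ref{pro:dual strong}, where invertibility of $T$ is precisely what makes this transfer work.) So your reduction via Corollary~\ref{co:aa} and Lemma~\ref{lem:equ} does not save any work: you must still verify \eqref{DPSPL5} (or, equivalently, \eqref{pl2} on $V^*$) directly by the same pairing method you describe for \eqref{DPSPL1}, \eqref{DPSPL3}, \eqref{DPSPL4}. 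Once you drop the faulty reduction and simply add \eqref{DPSPL5} to the list of axioms to be checked by hand, your argument coincides with the paper's.
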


\begin{proof}It is obvious that
$(V^*,[-,-]_{V^*})$ is a Lie algebra if and only if $T$ satisfies
Eqs.~\eqref{sdpo1} and~\eqref{sdpo3}.

$(\Longrightarrow)$ Suppose that
$(V^{*},\!\rhd_{V^{*}},\lhd_{V^{*}},[-,-]_{V^*})$ is a  \mysymbol
algebra. Let $u^{*},v^{*},w^{*}\in V^{*}$. Then we have
\begin{eqnarray*}
&&\rho^{*}\big(T(u^{*})\big) \Big( r^{*}(T(v^{*}))w^{*} +
r^{*}(T(w^{*}))v^{*} \Big)
= -[u^*, v^* \lhd w^* + w^* \lhd v^*] \\
&&\overset{\eqref{DPSPL2}}{=} -[u^*,w^*] \lhd v^* -v^* \lhd
[u^*,w^*]= r^{*}\Big( [T(u^{*}), T(w^{*})] \Big)v^{*} + r^{*}\big(
T(v^{*}) \big) \rho^{*} ( T(u^{*})  )
w^{*}\overset{\eqref{DPSPL2}}{=} 0.
\end{eqnarray*}
Thus Eq.~\eqref{sdpo2} holds and hence $T$ is strong.

$(\Longleftarrow)$ Suppose that  $T$ is strong. Let $u^{*},v^{*},w^{*}\in V^{*},a \in V$. Then we
have
\begin{eqnarray*}
&&\langle   u^{*} \lhd  [v^{*},w^{*}] -[u^{*} ,v^{*}]\lhd w^{*} -  [w^{*},u^{*} ] \lhd v^{*}, a\rangle \\
&&\overset{\hphantom{\eqref{ldlo2}}}{=} \langle    -r^*\Big(T\big(\rho^*(T(v^*))w^{*}\big)\Big)u^{*}   +r^*\big(T (w^{*} )\big)\rho^*\big(T(u^*)\big)v^{*}  + r^*\big(T(v^{*} )\big)\rho^*\big(T(w^*)\big)u^{*}, a\rangle \\
&&\overset{\eqref{ldlo2},\eqref{sdpo1}}{=} \langle    -r^*\Big(\big[T(v^*),T(w^*)\big]\Big)u^{*}   -r^*\big(T (w^{*} )\big)\rho^*\big(T(v^*)\big)u^{*}  + r^*\big(T(v^{*} )\big)\rho^*\big(T(w^*)\big)u^{*}, a\rangle \\
&&\overset{\hphantom{\eqref{ldlo2}}}{=} \langle   u^{*}, -\Big(\rho \big(T(v^*)\big)r\big(T (w^{*} )\big) - \rho\big(T(w^*)\big)r\big(T(v^{*} )\big)- r\big([T(v^*),T(w^*)]\big) \Big)a\rangle \overset{\eqref{eq7}}{=} 0.
\end{eqnarray*}
Thus Eq.~\eqref{DPSPL1}
holds. Similarly, Eqs.~\eqref{DPSPL2}-\eqref{DPSPL5} hold. Thus
$(V^{*},\rhd_{V^{*}},\lhd_{V^{*}},[-,-]_{V^*})$  is a \mysymbol
algebra.
\end{proof}

\begin{pro}\mlabel{pro:dual strong}
If a \dpdop on a post-Lie algebra is invertible, then it is strong.
\end{pro}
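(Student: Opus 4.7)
The strategy is to use invertibility to transport the post-Lie structure from $A$ to $V^*$, and then to reduce the claim to a direct application of Propositions~\ref{dpsplrep} and~\ref{pro:dual ldl}.

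First, set $\circ_{V^*}:=\rhd_{V^*}+\lhd_{V^*}$ on $V^*$. Combining the definitions in \eqref{eq:dual mul} with the \dpdop identities \eqref{ldlo1}--\eqref{ldlo2}, one reads off
\[
    T(u^*\circ_{V^*} v^*)=T(u^*)\circ T(v^*),\qquad
    T([u^*,v^*]_{V^*})=[T(u^*),T(v^*)],\quad \forall u^*,v^*\in V^*.
\]
Since $T$ is bijective, these identities transfer the post-Lie algebra structure of $(A,\circ,[-,-])$ to $V^*$: both antisymmetry and the Jacobi identity for $[-,-]_{V^*}$, together with Eqs.~\eqref{pl1}--\eqref{pl2} for $\circ_{V^*}$, follow by applying $T^{-1}$ to the corresponding identities on $A$. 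Hence $T$ becomes a post-Lie algebra isomorphism $(V^*,\circ_{V^*},[-,-]_{V^*})\to(A,\circ,[-,-])$, and $(V;\,l\circ T,\,r\circ T,\,\rho\circ T)$ is a representation of $(V^*,\circ_{V^*},[-,-]_{V^*})$.

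Next, identifying $(V^*)^*$ with $V$ via finite-dimensionality and using the basic rules for the dual map~\eqref{dualmap}, namely $(X^*)^*=X$ and $(AB)^*=-B^*A^*$, one checks directly from \eqref{eq:dual mul} that
\[
    L_{\rhd_{V^*}}^*-R_{\lhd_{V^*}}^*=l\circ T,\qquad -R_{\lhd_{V^*}}^*=r\circ T,\qquad \ad_{[-,-]_{V^*}}^*=\rho\circ T.
\]
Consequently $\bigl((V^*)^*;\,L_{\rhd_{V^*}}^*-R_{\lhd_{V^*}}^*,\,-R_{\lhd_{V^*}}^*,\,\ad_{[-,-]_{V^*}}^*\bigr)$ is precisely the representation of $(V^*,\circ_{V^*},[-,-]_{V^*})$ obtained above.

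Applying Proposition~\ref{dpsplrep} with $V^*$ in place of $A$ then yields that $(V^*,\rhd_{V^*},\lhd_{V^*},[-,-]_{V^*})$ is a \mysymbol algebra, and Proposition~\ref{pro:dual ldl} finally gives that $T$ is strong. The only mildly subtle point is the sign bookkeeping in the three dual-operator identities of the second paragraph; once those are in place the proof is purely a transport of structure along the isomorphism $T$.
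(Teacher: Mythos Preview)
Your proof is correct and follows essentially the same route as the paper: transport the post-Lie structure to $V^*$ via the invertible $T$, deduce that $(V^*,\rhd_{V^*},\lhd_{V^*},[-,-]_{V^*})$ is a \mysymbol algebra, and then conclude via Proposition~\ref{pro:dual ldl}. The only packaging difference is that the paper verifies Eqs.~\eqref{DPSPL1}, \eqref{DPSPL3}, \eqref{DPSPL4} directly and invokes Lemma~\ref{lem:equ}, whereas you identify $((V^*)^*;L_{\rhd_{V^*}}^*-R_{\lhd_{V^*}}^*,-R_{\lhd_{V^*}}^*,\ad_{[-,-]_{V^*}}^*)$ with the pulled-back representation $(V;l\circ T,r\circ T,\rho\circ T)$ and invoke Proposition~\ref{dpsplrep} instead.
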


\begin{proof}
Suppose that $T$ is an invertible \dpdop on a post-Lie algebra $(A,\circ,[-,-])$
associated to a representation $(V;l,r,\rho)$.  Define binary
operations $\rhd_{V^{*}},\lhd_{V^{*}},[-,-]_{V^{*}}:V^{*}\otimes
V^{*}\rightarrow V^{*}$ by Eq.~\eqref{eq:dual mul}. Then we have
\begin{align*}
u^{*} \circ_{V^{*}} v^* :&= u^{*} \rhd_{V^{*}} v^* + u^{*} \lhd_{V^{*}} v^* =   (l^{*}-r^{*})\big(T(u^{*})\big)v^{*}  -r^{*}\big(T(v^{*})\big)u^{*} = T^{-1}(T(u^{*})\circ T(v^{*}) )\\
[u^{*} , v^*]_{V^{*}}  &=\rho^{*}\big(T(u^{*})\big)v^{*} = T^{-1}([T(u^{*}),T(v^{*})]), \quad \forall u^{*}, v^{*} \in A^*.
\end{align*}
Thus $(V^*,\circ_{V^{*}},[-,-]_{V^{*}})$ is a post-Lie algebra.
Moreover, it is straightforward to show that Eqs.~\eqref{DPSPL1},
\eqref{DPSPL3} and  \eqref{DPSPL4} hold. By Lemma~\ref{lem:equ},
$(V^{*},\rhd_{V^{*}},\lhd_{V^{*}},[-,-]_{V^*})$ is a \mysymbol
algebra. Then by Proposition~\ref{pro:dual ldl}, $T$ is strong.
\end{proof}

\begin{thm}\mlabel{thm:dual 2}
    Let $(A, \circ, [-,-])$ be a post-Lie algebra. There is a compatible \mysymbol algebra structure $(A,\rhd,\lhd,[-,-])$ on $(A, \circ, [-,-])$
if and only if there is an invertible \dpdop on $(A, \circ,
[-,-])$ associated to a representation $(V;l,r,\rho)$. In this
case,  $\rhd,\lhd$ are defined by
    \begin{equation}\mlabel{deq:pro:2.14}
        x\rhd y:=T\big( (l^{*}-r^{*})(x)T^{-1}(y)\big),\;
        x\lhd y:=-T\big( r^{*} (y)T^{-1}(x)\big),\; \forall x,y\in A.
    \end{equation}
\end{thm}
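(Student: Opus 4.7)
The plan is to prove the two directions by leveraging the infrastructure built up in Proposition~\mref{dpsplrep} (characterization of \mysymbol algebras via dual representations), Proposition~\mref{pro:dual ldl} (strong \dpdops yield \mysymbol algebras on the dual), and Proposition~\mref{pro:dual strong} (invertible \dpdops are automatically strong). Neither direction should require new computations beyond what these propositions provide.

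For the $(\Longleftarrow)$ direction, suppose $T: V^*\to A$ is an invertible \dpdop on $(A,\circ,[-,-])$ associated to $(V;l,r,\rho)$. First, Proposition~\mref{pro:dual strong} gives that $T$ is strong, and then Proposition~\mref{pro:dual ldl} yields a \mysymbol algebra structure $(V^*,\rhd_{V^*},\lhd_{V^*},[-,-]_{V^*})$ defined by Eq.~\eqref{eq:dual mul}. I would then transport this structure to $A$ via the isomorphism $T$ by setting $x\rhd y:=T(T^{-1}(x)\rhd_{V^*}T^{-1}(y))$ and $x\lhd y:=T(T^{-1}(x)\lhd_{V^*}T^{-1}(y))$; unpacking Eq.~\eqref{eq:dual mul} immediately recovers the formulas in Eq.~\eqref{deq:pro:2.14}. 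To check that this transported \mysymbol structure is compatible with $(A,\circ,[-,-])$, one verifies that the transported Lie bracket agrees with $[-,-]$ (this is exactly Eq.~\eqref{ldlo2}) and that $x\rhd y+x\lhd y=x\circ y$ (this is exactly Eq.~\eqref{ldlo1}). No further calculation is required.

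For the $(\Longrightarrow)$ direction, suppose $(A,\rhd,\lhd,[-,-])$ is a compatible \mysymbol algebra on $(A,\circ,[-,-])$. By Proposition~\mref{dpsplrep}, the quadruple $(A^{*};L_{\rhd}^{*}-R_{\lhd}^{*},-R_{\lhd}^{*},\mathrm{ad}^{*})$ is a representation of $(A,\circ,[-,-])$. Now take $V:=A^{*}$ (so $V^{*}=A$ under the canonical identification in finite dimensions) and $T:=\mathrm{id}_A$. Using $(f^{*})^{*}=f$ under this identification, the dual of $L_{\rhd}^{*}-R_{\lhd}^{*}$ is $L_{\rhd}-R_{\lhd}$, the dual of $-R_{\lhd}^{*}$ is $-R_{\lhd}$, and the dual of $\mathrm{ad}^{*}$ is $\mathrm{ad}$. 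A direct substitution then shows
\[
(l^{*}-r^{*})(x)y-r^{*}(y)x=L_{\rhd}(x)y+R_{\lhd}(y)x=x\rhd y+x\lhd y=x\circ y,
\]
which is Eq.~\eqref{ldlo1}, while $\rho^{*}(x)y=\mathrm{ad}(x)y=[x,y]$ gives Eq.~\eqref{ldlo2}. Hence $T=\mathrm{id}$ is an invertible \dpdop associated to $(V;l,r,\rho)$, and the formulas in Eq.~\eqref{deq:pro:2.14} reduce trivially to the given $\rhd$ and $\lhd$.

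The principal conceptual point — rather than a computational obstacle — is simply the double-dual bookkeeping in the $(\Longrightarrow)$ direction: one must keep straight that the operator goes from $V^{*}$ to $A$, so when $V=A^{*}$ the natural choice $T=\mathrm{id}_A$ plays the role of the identification $V^{*}\cong A$, and the $l^{*},r^{*},\rho^{*}$ appearing in the definition of a \dpdop become the original representation maps $L_{\rhd}-R_{\lhd}$, $-R_{\lhd}$, $\mathrm{ad}$ of Proposition~\mref{dpsplrep}. Once that identification is made explicit, both directions are essentially immediate consequences of the three preceding propositions.
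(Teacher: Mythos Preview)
Your proposal is correct and follows essentially the same approach as the paper: both directions invoke Proposition~\mref{dpsplrep}, Proposition~\mref{pro:dual ldl}, and Proposition~\mref{pro:dual strong} in the same way, with $T=\mathrm{id}_A$ on the representation $(A^*;L_\rhd^*-R_\lhd^*,-R_\lhd^*,\mathrm{ad}^*)$ for the forward direction and the transport of the $V^*$-structure through $T$ for the converse. Your write-up is slightly more explicit about the transport step than the paper's, but the argument is the same.
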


\begin{proof} $(\Longrightarrow)$  By Proposition
\mref{dpsplrep}, $(A^{*}; L_{\rhd}^{*}-R_{\lhd}^{*},-R_{\lhd}^{*},
\mathrm{ad}^{*})$ is a representation  of $(A, \circ, [-,-])$. For
all $x,y \in A$, we have
\begin{align*}
 \mathrm{id}(x) \circ \mathrm{id}(y)& = 
x \rhd y + x \lhd y = \mathrm{id}\Big(\big((L_{\rhd}^{*}-R_{\lhd}^{*} )^{*}-(-R_{\lhd}^{*})^{*}\big) ( \mathrm{id}(x) )y-\big(-R_{\lhd}^{*}\big)^{*} ( \mathrm{id}(y) )x\Big),\\
 [\mathrm{id}(x), \mathrm{id}(y)]  &=[x,y] = \mathrm{id} ((\mathrm{ad}^*)^*(\mathrm{id}(x))y) =-[y,x] = -\mathrm{id} ((\mathrm{ad}^*)^*(\mathrm{id}(y))x) .
\end{align*}
Thus $\mathrm{id}: (A^*)^* \cong A \to A$ is an invertible \dpdop
on $(A,\circ,[-,-])$ associated to  the representation $(A^{*};
L_{\rhd}^{*}-R_{\lhd}^{*},-R_{\lhd}^{*}, \mathrm{ad}^{*})$.

 $(\Longleftarrow)$ Suppose that  $T$ is an invertible \dpdop on $(A, \circ,
[-,-])$ associated to a representation $(V;l,r,\rho)$.  Then  we
have
    \begin{align*}
 x \circ y &\overset{\eqref{ldlo1}}{=}  T\big( (l^{*}-r^{*})(x)T^{-1}(y) -  r^{*} (y)T^{-1}(x)\big)  \overset{\eqref{deq:pro:2.14}}{=}     x\rhd y + x\lhd
 y,\;\;
  [x , y] \overset{\eqref{ldlo2}}{=}  T\big( \rho^{*} (x)T^{-1}(y)\big),
\end{align*}
for all $x,y\in A$. By Propositions \mref{pro:dual ldl} and
\mref{pro:dual strong}, $(A,\rhd,\lhd,[-,-])$ is a compatible
\mysymbol algebra structure
 on $(A, \circ, [-,-])$, where the
binary operations $\rhd,\lhd$  are defined by
Eq.~\eqref{deq:pro:2.14}.
\end{proof}

\subsection{\Mysymbol algebras from \gphpl algebras}

\begin{pro}\mlabel{compaDPSPL1}
Let $(A, \circ,[-,-],\mathcal{B})$ be a \gphpl algebra. Then there exists a compatible \mysymbol algebra
structure $(A,\rhd,\lhd,[-,-])$  on  $(A, \circ,[-,-])$ with
$\rhd,\lhd$ defined by
\begin{align}
\mathcal{B}(x \rhd y, z) &= -\mathcal{B}(y, x \circ z- z \circ x),\mlabel{comDpspl1} \\
\mathcal{B}(x \lhd y, z) &=  \mathcal{B}(x, z \circ y), \quad \forall x, y, z \in A. \mlabel{comDpspl2}
\end{align}
Moreover,  the  representations  $(A;L_{\circ}, R_{\circ},
\mathrm{ad})$    and
$(A^*;L_{\rhd}^*-R_{\lhd}^*,-R_{\lhd}^*,\mathrm{ad}^*)$  of the
post-Lie algebra $(A,\circ,[-,-])$   are equivalent.

Conversely, let $(A,  \rhd,\lhd,[-,-])$  be a compatible \mysymbol algebra structure  on a post-Lie algebra $(A,  \circ, [-,-])$.
 If  $(A;L_{\circ}, R_{\circ},
\mathrm{ad})$    and
$(A^*;L_{\rhd}^*-R_{\lhd}^*,-R_{\lhd}^*,\mathrm{ad}^*)$  are equivalent as representations of  $(A,  \circ, [-,-])$, then there exists a nondegenerate  bilinear form  $\mathcal{B}$ on $(A, \circ,[-,-])$ satisfying Eq.~\eqref{lieinv} and the equation
 \begin{align}
\mathcal{B}(x \circ y, z)=-\mathcal{B}(y, x \circ z - z \circ
x)+\mathcal{B}(z \circ y,x),\;\;\forall x,y,z\in A.
\mlabel{quasi2-cocycle}
\end{align}
In particular, if $\mathcal{B}$ is symmetric, then $(A,\circ,[-,-],\mathcal{B})$ is a \gphpl algebra.
\end{pro}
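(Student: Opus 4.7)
My plan is to handle the forward and converse directions separately, using Theorem~\mref{thm:dual 2} as the conceptual engine and exploiting the canonical isomorphism $\hat{\mathcal{B}}: A \to A^*$, $\hat{\mathcal{B}}(x) := \mathcal{B}(x, -)$, to transfer data between $A$ and $A^*$.

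For the forward direction, I would set $T := \hat{\mathcal{B}}^{-1}: A^* \to A$ and show that $T$ is an invertible \dpdop on $(A, \circ, [-,-])$ associated to the adjoint representation $(A; L_\circ, R_\circ, \mathrm{ad})$. After the substitution $u^* = \hat{\mathcal{B}}(u)$, $v^* = \hat{\mathcal{B}}(v)$, the two defining identities \eqref{ldlo1}--\eqref{ldlo2} rephrase as
\begin{align*}
\hat{\mathcal{B}}(u \circ v) &= (L_\circ^* - R_\circ^*)(u)\, \hat{\mathcal{B}}(v) - R_\circ^*(v)\, \hat{\mathcal{B}}(u), \\
\hat{\mathcal{B}}([u, v]) &= \mathrm{ad}^*(u)\, \hat{\mathcal{B}}(v) = -\mathrm{ad}^*(v)\, \hat{\mathcal{B}}(u).
\end{align*}
The Lie identity reduces, on pairing with $w \in A$, to $\mathcal{B}([u, v], w) = -\mathcal{B}(v, [u, w])$, which is immediate from \eqref{lieinv} combined with the symmetry of $\mathcal{B}$. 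The $\circ$-identity reduces to $\mathcal{B}(u \circ v, w) + \mathcal{B}(v, u \circ w) = \mathcal{B}(v, w \circ u) + \mathcal{B}(u, w \circ v)$; this follows by applying Eq.~\eqref{2-cocycle} to the triple $(v, w, u)$ and then using symmetry of $\mathcal{B}$ to match the remaining terms. Invertibility of $T$ makes it automatically strong by Proposition~\mref{pro:dual strong}, so Theorem~\mref{thm:dual 2} produces a compatible \mysymbol algebra structure on $(A, \circ, [-,-])$, and unraveling Eq.~\eqref{deq:pro:2.14} by pairing against $z$ shows that the resulting $\rhd, \lhd$ coincide with those defined in \eqref{comDpspl1}--\eqref{comDpspl2}. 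The equivalence of representations is then witnessed by $\hat{\mathcal{B}}$ itself, because the two \dpdop identities are precisely the intertwining relations for $(L_\circ, R_\circ)$ and $\mathrm{ad}$.

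For the converse, let $\varphi: A \to A^*$ be an equivalence of representations and set $\mathcal{B}(x, y) := \langle \varphi(x), y \rangle$, which is nondegenerate. Intertwining with $\mathrm{ad}$ gives $\mathcal{B}([x, y], z) = -\mathcal{B}(y, [x, z])$, which yields \eqref{lieinv} once symmetry of $\mathcal{B}$ is assumed (as in the final clause of the statement). Intertwining with $L_\circ$ applied to the ordered triple $(x, y, z)$ produces $\mathcal{B}(x \circ y, z) = -\mathcal{B}(y, x \rhd z) + \mathcal{B}(y, z \lhd x)$, and applied to $(z, y, x)$ it produces $\mathcal{B}(z \circ y, x) = -\mathcal{B}(y, z \rhd x) + \mathcal{B}(y, x \lhd z)$. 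Substituting the relations $z \lhd x = z \circ x - z \rhd x$ and $x \lhd z = x \circ z - x \rhd z$ (from $\circ = \rhd + \lhd$) into these two equations and combining eliminates all $\rhd$-terms and produces exactly Eq.~\eqref{quasi2-cocycle}. For the ``in particular'' clause, imposing symmetry in \eqref{quasi2-cocycle} and subtracting the version with $(x, y)$ swapped collapses to Eq.~\eqref{2-cocycle}, so $(A, \circ, [-,-], \mathcal{B})$ is a \gphpl algebra.

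The main technical obstacle is the verification of the $\circ$-identity \eqref{ldlo1} in the forward direction. Unlike \eqref{ldlo2}, which merely restates Lie invariance, this identity mixes $\circ$-terms on both sides of the $\mathcal{B}$-pairing and requires a cyclic relabeling $(u, v, w) \to (v, w, u)$ in Eq.~\eqref{2-cocycle} combined with the symmetry of $\mathcal{B}$ to close; it is the one step where the interaction between the bilinear-form invariance and the post-Lie axioms is non-trivial. Once it is handled, the remaining work---reading off the explicit formulas \eqref{comDpspl1}--\eqref{comDpspl2} from Eq.~\eqref{deq:pro:2.14}, extracting the representation equivalence from the two \dpdop identities, and reversing the argument for the converse---is bookkeeping.
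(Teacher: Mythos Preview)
Your approach mirrors the paper's: define the isomorphism $\hat{\mathcal{B}}:A\to A^*$, show its inverse is an invertible \dpdop associated to the adjoint representation, invoke Theorem~\mref{thm:dual 2}, and read off \eqref{comDpspl1}--\eqref{comDpspl2}. The converse is also in the same spirit, and your derivation of \eqref{quasi2-cocycle} by subtracting the two $L_\circ$-intertwining relations is correct (and more explicit than the paper's ``reverse the argument'').

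There is, however, a genuine gap in the forward direction. Your claim that ``the two \dpdop identities are precisely the intertwining relations for $(L_\circ,R_\circ)$ and $\mathrm{ad}$'' is false for the $\circ$-part. The \dpdop identity \eqref{ldlo1}, after your substitution, reads
\[
\hat{\mathcal{B}}(u\circ v)=(L_\circ^*-R_\circ^*)(u)\,\hat{\mathcal{B}}(v)-R_\circ^*(v)\,\hat{\mathcal{B}}(u),
\]
with $L_\circ^*,R_\circ^*$ on the right, whereas the required intertwining relations are $\hat{\mathcal{B}}(x\circ y)=(L_\rhd^*-R_\lhd^*)(x)\hat{\mathcal{B}}(y)$ and $\hat{\mathcal{B}}(y\circ x)=-R_\lhd^*(x)\hat{\mathcal{B}}(y)$, with $L_\rhd^*,R_\lhd^*$ on the right. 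These are not the same identities; indeed the first is equivalent merely to $\circ=\rhd+\lhd$. To close the gap one must use the explicit formulas \eqref{comDpspl1}--\eqref{comDpspl2} together with the \emph{symmetry} of $\mathcal{B}$ to obtain the auxiliary identities $\mathcal{B}(x\circ y,z)=\mathcal{B}(x,z\lhd y)$ and $\mathcal{B}(x\circ y,z)=\mathcal{B}(y,z\lhd x-x\rhd z)$, from which the three intertwining relations follow directly; this is exactly what the paper does.

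A smaller point: in the converse you say that $\mathcal{B}([x,y],z)=-\mathcal{B}(y,[x,z])$ yields \eqref{lieinv} only after assuming symmetry. In fact it yields \eqref{lieinv} immediately by swapping the roles of $x$ and $y$ and using the antisymmetry of $[-,-]$; no symmetry of $\mathcal{B}$ is required at that step, which matters because the converse statement asserts \eqref{lieinv} before the symmetry hypothesis is introduced.
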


\begin{proof}
Define an invertible linear map $\varphi: A \to A^*$ by
 \begin{align*}
  \langle \varphi(x ), y\rangle &= \mathcal{B}( x ,y),\; \forall x,y \in A.
 \end{align*}
 For all $u^*,v^*,w^* \in A^*$, there exist $x,y,z \in A$ such that $u^* = \varphi(x), v^* = \varphi(y), w^* = \varphi(z)$. Therefore
\begin{eqnarray*}
\langle \varphi(x \circ y), z \rangle &=& \langle (L_{\circ}^*-
R_{\circ}^*)(x)\varphi(y),z\rangle - \langle
R_{\circ}^*(y)\varphi(x), z \rangle,\;\; \langle \varphi([x,y]), z
\rangle  = \langle \mathrm{ad}^{*}(x) \varphi  (y) , z \rangle.
\end{eqnarray*}
Then we obtain {\small \begin{align*}
 \varphi^{-1}(u^*) \circ \varphi^{-1}(v^*)  &= \varphi^{-1}\big((L_{\circ}^*- R_{\circ}^*)(\varphi^{-1}(u^*))v^* -
 R_{\circ}^*(\varphi^{-1}(v^*))u^*\big),\;\;
 [\varphi^{-1}(u^*),\varphi^{-1}(v^*)] = \varphi^{-1}\big( \mathrm{ad}^{*} (\varphi^{-1}(u^*))v^*\big).
\end{align*}}
Hence $\varphi^{-1}: A^* \to A$ is an invertible \dpdop on $(A,
\circ, [-,-])$ associated to the representation $(A;L_{\circ},
R_{\circ}, \mathrm{ad})$. By Theorem \mref{thm:dual 2},   there is
a compatible \mysymbol algebra structure $(A,\rhd,\lhd,[-,-])$ on
$(A, \circ, [-,-])$ with $\rhd,\lhd$    defined by
Eq.~\eqref{deq:pro:2.14} in taking $T := \varphi^{-1}, l :=
L_{\circ}, r := R_{\circ}$. Thus we have
\begin{align*}
\mathcal{B}(x \rhd y, z) &=  \langle \varphi (x \rhd y), z\rangle
= \langle  ( L_{\circ}^{*}-R_{\circ}^{*})(x)\varphi(y), z\rangle =
 -\mathcal{B}(y, x \circ z- z \circ x), \\
\mathcal{B}(x \lhd y, z) &=  \langle \varphi (x \lhd y), z\rangle
= -\langle  R_{\circ}^{*} (y)\varphi (x), z\rangle =
\mathcal{B}(x, z \circ y).
\end{align*}
Moreover,  by the symmetry of $\mathcal{B}$, we obtain
 \begin{align}
\mathcal{B}( x \circ y,z) &=  \mathcal{B}( x,z \lhd y), \mlabel{symin1}\\
\mathcal{B}( x \circ y,z) &=   \mathcal{B}(z,   y \circ
x)-\mathcal{B}(x \rhd z, y) =   \mathcal{B}(z \lhd x,   y
)-\mathcal{B}(y,x \rhd z)=  \mathcal{B}(y ,z \lhd x -x \rhd z).
\mlabel{symin2}
\end{align}
Then we have
 \begin{eqnarray*}
\langle \varphi(L_{\circ }(x)y), z\rangle &=& \mathcal{B}(y ,z \lhd x -x \rhd z) = \langle (L_{\rhd}^*-R_{\lhd}^*)(x)\varphi(y),z\rangle.\\
\langle \varphi(R_{\circ }(y)x), z\rangle &=&  \mathcal{B}( x \circ y,z) = \mathcal{B}( x,z \lhd y)  = \langle - R_{\lhd}^* (y)\varphi(x),z\rangle.\\
\langle \varphi(\mathrm{ad}(x)y), z \rangle &=& \mathcal{B}([x,y],
z) =- \mathcal{B}(y, [x,z])  =  \langle \mathrm{ad}^{*}(x) \varphi
(y) , z \rangle.
  \end{eqnarray*}
 Thus $\varphi$ gives  the equivalence between $(A;L_{\circ}, R_{\circ}, \mathrm{ad})$    and $(A^*;L_{\rhd}^*-R_{\lhd}^*,-R_{\lhd}^*,\mathrm{ad}^*)$ as the representations of  $(A,\circ,[-,-])$. The converse statement can be proved by   reversing the argument.
\end{proof}

\begin{cor}
Let $(A, \circ,[-,-],\mathcal{B})$ be a \gphpl
algebra. Then there exists another post-Lie algebra $(A,
\bullet,[-,-])$  with $\bullet$ defined by
\begin{align}
\mathcal{B}(x \bullet y, z) = -\mathcal{B}( y, x \circ z) ,\quad \forall x,y,z \in A.
\end{align}
Moreover, $(A, \bullet,[-,-],\mathcal{B})$ is also a \gphpl
algebra.
\end{cor}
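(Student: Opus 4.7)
Since $\mathcal{B}$ is nondegenerate, the displayed identity uniquely determines a binary operation $\bullet$ on $A$. The plan is first to identify $\bullet$ with an operation that earlier results have already produced, so that the post-Lie axioms come for free, and then to verify invariance of $\mathcal{B}$ by a short direct computation.

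For the identification step, I would apply Proposition~\mref{compaDPSPL1} to the given \gphpl algebra $(A,\circ,[-,-],\mathcal{B})$ to obtain the compatible \mysymbol algebra $(A,\rhd,\lhd,[-,-])$ whose operations satisfy \eqref{comDpspl1} and \eqref{comDpspl2}. A brief computation using those two identities yields
\[
\mathcal{B}(x\rhd y-y\lhd x,z)=-\mathcal{B}(y,x\circ z-z\circ x)-\mathcal{B}(y,z\circ x)=-\mathcal{B}(y,x\circ z),
\]
so by nondegeneracy of $\mathcal{B}$ one has $x\bullet y=x\rhd y-y\lhd x$. In other words, $\bullet$ is precisely the vertical operation attached to the \mysymbol algebra $(A,\rhd,\lhd,[-,-])$ in Proposition~\mref{pspleuqvi}(\mref{it:b2}), and that proposition then gives immediately that $(A,\bullet,[-,-])$ is a post-Lie algebra.

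It remains to verify that $\mathcal{B}$ is invariant on $(A,\bullet,[-,-])$. The Lie bracket is unchanged, so \eqref{lieinv} is automatic. For \eqref{2-cocycle} with $\bullet$ in place of $\circ$, I would expand each of the four terms using the defining relation of $\bullet$ together with the symmetry of $\mathcal{B}$ (applied, for instance, in the form $\mathcal{B}(x,y\bullet z)=\mathcal{B}(y\bullet z,x)=-\mathcal{B}(z,y\circ x)$), reducing the left-hand side to
\[
\mathcal{B}(x,y\circ z)-\mathcal{B}(y,x\circ z)-\mathcal{B}(z,x\circ y-y\circ x).
\]
The invariance \eqref{2-cocycle} of $\mathcal{B}$ on the original post-Lie algebra $(A,\circ,[-,-])$, transposed via symmetry, states exactly that $\mathcal{B}(z,x\circ y-y\circ x)=\mathcal{B}(x,y\circ z)-\mathcal{B}(y,x\circ z)$, so the three terms cancel and the expression vanishes. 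Therefore $\mathcal{B}$ is invariant on $(A,\bullet,[-,-])$, and $(A,\bullet,[-,-],\mathcal{B})$ is a \gphpl algebra.

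The only structurally nontrivial step is the identification of $\bullet$ with the vertical operation of the associated \mysymbol algebra, which offloads the axioms \eqref{pl1} and \eqref{pl2} onto Proposition~\mref{pspleuqvi}; I do not anticipate any real obstacle, only careful sign bookkeeping when moving arguments through $\mathcal{B}$ via symmetry in the invariance check.
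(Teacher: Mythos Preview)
Your proposal is correct and follows essentially the same approach as the paper: apply Proposition~\mref{compaDPSPL1} to obtain the compatible \mysymbol algebra, identify $\bullet$ with its vertical operation via Proposition~\mref{pspleuqvi}(\mref{it:b2}), and then check invariance of $\mathcal{B}$. The paper's proof merely asserts the last two steps as ``straightforward,'' whereas you spell out the identification $x\bullet y=x\rhd y-y\lhd x$ and the cancellation in the $2$-cocycle condition explicitly; both arguments are sound.
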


\begin{proof}
By Proposition \mref{compaDPSPL1}, there is a compatible \mysymbol
algebra structure $(A,\rhd,\lhd,[-,-])$ on the post-Lie algebra
$(A, \circ, [-,-])$ with $\rhd,\lhd$ defined by
Eqs.~\eqref{comDpspl1}--\eqref{comDpspl2} and   $(A, \circ,
[-,-])$  is  the associated horizontal post-Lie algebra of the
\mysymbol algebra $(A,\rhd,\lhd,[-,-])$. It is straightforward to
show that $(A, \bullet,[-,-])$ is the associated   vertical
post-Lie algebra of $(A,\rhd,\lhd,[-,-])$ and $(A,
\bullet,[-,-],\mathcal{B})$ is  a \gphpl algebra.
\end{proof}

\begin{pro}
\label{p:ppplgphpl}
Let $(A,\rhd,\lhd,[-,-])$ be a  \mysymbol algebra and
$(A,\circ,[-,-])$ be the associated  horizontal post-Lie algebra. Then $(A
\ltimes_{L_{\rhd}^{*}-R_{\lhd}^{*}, -R_{\lhd}^{*}, \mathrm{ad}^*}
A^*,\mathcal{B}_d)$ is a \gphpl algebra, where
$\mathcal{B}_d$ is the nondegenerate symmetric  bilinear
form  defined by
\begin{align}\label{mtorbl}
\mathcal{B}_d(x+a^*,y+b^*) :=   \langle  x,b^* \rangle +  \langle  y,a^* \rangle, \quad \forall x,y \in A, a^*,b^* \in A^*.
\end{align}
\end{pro}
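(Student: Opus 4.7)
The plan is to first identify the post-Lie algebra structure on $A\oplus A^*$ and then directly verify the two invariance conditions for $\mathcal{B}_d$. By Proposition \mref{dpsplrep}, the quadruple $(A^*; L_{\rhd}^{*}-R_{\lhd}^{*}, -R_{\lhd}^{*}, \mathrm{ad}^*)$ is a representation of the horizontal post-Lie algebra $(A, \circ, [-,-])$. Consequently, the formulas \meqref{pleq11} and \meqref{pleq10} endow $A \oplus A^*$ with the desired semidirect-product post-Lie algebra structure. The symmetry of $\mathcal{B}_d$ is built into its definition \meqref{mtorbl}, and its nondegeneracy follows at once from the perfectness of the canonical pairing between $A$ and $A^*$.

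The remaining task is to establish the two invariance conditions \meqref{lieinv} and \meqref{2-cocycle} from Definition \mref{d:gphpl}. For \meqref{lieinv}, I would expand $\mathcal{B}_d([x+a^*, y+b^*]_{A \oplus A^*}, z+c^*)$ using \meqref{pleq10} and transfer $\mathrm{ad}^*$ across the pairing, arriving at the expression $\langle [x,y], c^* \rangle - \langle [x,z], b^* \rangle + \langle [y,z], a^* \rangle$. The analogous expansion of $\mathcal{B}_d(x+a^*, [y+b^*, z+c^*]_{A \oplus A^*})$ yields the same expression, which verifies Lie invariance directly.

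For \meqref{2-cocycle}, I would expand both $\mathcal{B}_d((x+a^*) \circ_{A \oplus A^*} (y+b^*), z+c^*)$ and $\mathcal{B}_d(x+a^*, (y+b^*) \circ_{A \oplus A^*} (z+c^*))$ using \meqref{pleq11} and transfer the $L_{\rhd}^*$ and $R_{\lhd}^*$ actions to the opposite side of the pairing. Computing their difference and using the identity $x\circ y = x\rhd y + x\lhd y$, the coefficient of $c^*$ reduces to $\langle x\rhd y + y\rhd x, c^*\rangle$, which is manifestly symmetric in $x,y$; meanwhile the coefficients of $b^*$ and $a^*$ reduce to $-\langle x\circ z - z\lhd x, b^*\rangle$ and $-\langle y\circ z - z\lhd y, a^*\rangle$ respectively, and these swap correctly under the exchange $(x,a^*)\leftrightarrow (y,b^*)$. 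This is exactly the 2-cocycle condition.

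I do not anticipate a conceptual obstacle: the verification is a routine computation using only the duality of the actions on $A^*$, and no further \mysymbol-axioms beyond those already packaged in Proposition \mref{dpsplrep} are needed. The only mild care required lies in the sign bookkeeping when transferring $L_\rhd^*$, $R_\lhd^*$, and $\mathrm{ad}^*$ across the canonical pairing.
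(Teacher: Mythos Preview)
Your proposal is correct and follows exactly the approach the paper takes: the paper's proof simply states that it is direct to check $\mathcal{B}_d$ is invariant on the semidirect-product post-Lie algebra, and you have supplied precisely that direct check, with the correct sign bookkeeping and the correct identification of the symmetric $c^*$-coefficient and the swapping $a^*,b^*$-coefficients.
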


\begin{proof}
It is direct to check that  $\mathcal{B}_d$ is  \mysymbolnew on the post-Lie
algebra $A \ltimes_{L_{\rhd}^{*}-R_{\lhd}^{*}, -R_{\lhd}^{*},
    \mathrm{ad}^*} A^*$.
\end{proof}

\section{Pp-post-Lie bialgebras, $\mathcal{O}$-operators and classical Yang-Baxter equation}
\mlabel{s:ldlb}

We introduce the notions of a Manin triple  of post-Lie algebras
associated to an \mysymbolnew bilinear form, and a \mysymbol
bialgebra. The equivalence between them is interpreted in terms of
certain matched pairs of  post-Lie algebras. The study of a
special class of \mysymbol bialgebras leads to the introduction of
the \PCYBE in a  \mysymbol algebra whose antisymmetric solutions
give \mysymbol bialgebras. The \PCYBE is given an operator form as
the $\mathcal{O}$-operators on  \mysymbol algebras and the
construction of skew-symmetric solutions of \PCYBE is given in
terms of $\mathcal{O}$-operators on \mysymbol algebras.

\subsection{Matched pairs, Manin triples of post-Lie algebras and \mysymbol bialgebras}

\begin{pro} \mlabel{mppstl}
Let $(A,\circ_A,[-,-]_A)$  and $(B, \circ_B,[-,-]_B)$ be
post-Lie algebras. Let $l_{A},
r_{A}, \rho_A: A \rightarrow {\rm End}_{\mathbb F}(B)$  and
$l_{B}, r_{B}, \rho_B: B \rightarrow  {\rm End}_{\mathbb F}(A)$ be linear maps. Define binary operations $\circ$ and $[-,-]$ on $A\oplus B$ by
\vsb
\begin{eqnarray}
   & (x+a) \circ (y+b)  := \left(x \circ_A y+l_{B}(a) y+r_{B}(b) x\right)+\left(a \circ_B b+l_{A}(x) b+r_{A}(y) a\right),& \mlabel{r5}\\
    &[x+a, y+b]  := [x, y]+\rho_B(a) y-\rho_B(b) x+[a, b]+\rho_A(x)
    b-\rho_A(y) a, \ \  \forall x, y\in A, a, b\in B.& \label{r6}
\end{eqnarray}
Then $(A \oplus B,\circ,[-,-])$ is a post-Lie algebra if and only
if the quadruples $\left(B;l_{A}, r_{A},\rho_A \right)$ and
$\left(A;l_{B}, r_{B},\rho_B \right)$ are the representations of
the post-Lie algebras $(A,\circ_A,[-,-]_A)$  and $(B,\circ_B,
[-,-]_B)$ respectively, and the following conditions are
satisfied.
\begin{align}
\rho_A(x)([a, b]_B)&=  [\rho_A(x) a, b ]_{B}+ [a, \rho_A(x) b ]_{B}+ \rho_A (\rho_B(b) x ) a -\rho_A (\rho_B(a) x ) b,\mlabel{mplo}  \\
\rho_A(x)(a \circ_B b)&=  a \circ_B \rho_A(x)b  +[b,r_A(x)a]_B - \rho_A(l_B(a)x)b -r_{A}(\rho_B(b)x)a, \mlabel{mppl3} \\
\rho_B(a)([x, y]_A)&=  [\rho_B(a) x, y ]_{A}+ [x, \rho_B(a) y ]_{A}+\rho_B (\rho_A(y) a ) x - \rho_B (\rho_A(x) a ) y,\mlabel{mpl1} \\
\rho_B(a)(x \circ_A y)  &=x \circ_A \rho_B(a)y + [y,r_B(a)x]_A - \rho_B(l_A(x)a)y -r_{B}(\rho_A(y)a)x,  \mlabel{mppl1} \\
l_A(x)([a,b]_B) &= [l_A(x)a,b]_B + [a,l_A(x)b]_B + \rho_A(r_B(a)x)b - \rho_A(r_B(b)x)a, \mlabel{mppl4} \\
l_A(x)(a \circ_B b) &= l_A(x)a\circ_B b + a\circ_B l_A(x)b  - r_A(x)a \circ_B b + \rho_A(x)a \circ_B b  \nonumber \\
 &\quad + r_A(r_B(b)x)a - l_A(l_B(a)x)b + l_A(r_B(a)x)b- l_A(\rho_B(a)x)b, \mlabel{mppl7}\\
l_B(a)([x,y]_A) &= [l_B(a)x,y]_A + [x,l_B(a)y]_A + \rho_B(r_A(x)a)y - \rho_B(r_A(y)a)x, \mlabel{mppl2} \\
l_B(a)(x \circ_A y) &=  l_B(a)x\circ_A y + x\circ_A l_B(a)y  - r_B(a)x \circ_A y + \rho_B(a)x \circ_A y  \nonumber \\
 &\quad + r_B(r_A(y)a)x - l_B(l_A(x)a)y + l_B(r_A(x)a)y - l_B(\rho_A(x)a)y, \mlabel{mppl5} \\
r_A(x)(\{a,b\}_B) &= a \circ_B r_A(x)b - b \circ_B r_A(x)a + r_A(l_B(b)x)a - r_A(l_B(a)x)b, \mlabel{mppl8}\\
r_B(a)(\{x,y\}_{A}) &= x \circ_A r_B(a)y - y \circ_A r_B(a)x+r_B(l_A(y)a)x - r_B(l_A(x)a)y, \mlabel{mppl6}
\end{align}
 for all $x, y \in A$, $a, b \in B$. In this case, we denote this
post-Lie algebra by $A \bowtie_{l_{B}, r_{B},\rho_B}^{l_{A},
r_{A},\rho_A} B$ or simply $A \bowtie B$, and call such an octuple
$\left(A, B, l_{A}, r_{A},\rho_A, l_{B}, r_{B},\rho_B\right)$
a \textbf{matched pair of post-Lie algebras}.
\end{pro}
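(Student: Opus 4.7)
The plan is to prove the iff by a direct (but carefully organized) expansion of the three post-Lie axioms on $(A\oplus B,\circ,[-,-])$ and then to read off the listed compatibility conditions by separating each identity into its $A$- and $B$-components. The key organizing observation is that for $x\in A$ and $a\in B$, \eqref{r5} splits $x\circ a$ as $r_B(a)x\in A$ plus $l_A(x)a\in B$, and $a\circ x$ as $l_B(a)x\in A$ plus $r_A(x)a\in B$; likewise, the mixed Lie brackets in \eqref{r6} involve only $\rho_A,\rho_B$. Every equation produced below is of the form ``LHS $=$ RHS in $A\oplus B$,'' and both directions of the iff are obtained simultaneously because the projected equation holds if and only if the corresponding numbered compatibility condition does.

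\textbf{Step 1 (Lie algebra structure on $A\oplus B$).} Antisymmetry of $[-,-]$ is immediate from \eqref{r6}. For Jacobi, I would argue by cases on how many of the three inputs lie in $A$. The pure-$A$ and pure-$B$ cases recover Jacobi for $(A,[-,-]_A)$ and $(B,[-,-]_B)$, already assumed. In the ``two from $A$, one from $B$'' case, projecting to $B$ gives exactly the condition $\rho_B([x,y]_A)=[\rho_B(x),\rho_B(y)]$ (that $(A;\rho_B)$ is a Lie representation), and projecting to $A$ gives precisely \eqref{mpl1}. Symmetrically, the ``one from $A$, two from $B$'' case yields that $(B;\rho_A)$ is a Lie representation of $(B,[-,-]_B)$ together with \eqref{mplo}.

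\textbf{Step 2 (axiom \eqref{pl1}).} Expand $(X)\circ[Y,Z]=[X\circ Y,Z]+[Y,X\circ Z]$ for $X=x+a$, $Y=y+b$, $Z=z+c$. The pure-$A$ and pure-$B$ cases return \eqref{pl1} for $A$ and $B$. For the six mixed cases, each splits cleanly into an $A$-part and a $B$-part. Working systematically, the subcase $x\in A$, $y,z\in B$ produces \eqref{eq7} for $(A;l_B,r_B,\rho_B)$ (from the $A$-projection) and \eqref{mppl4} (from the $B$-projection); the subcase $x,y\in A$, $z\in B$ produces \eqref{eq6} for $(B;l_A,r_A,\rho_A)$ and \eqref{mppl1}; and the remaining two ``orientations'' give the corresponding statements with $A$ and $B$ swapped, namely \eqref{eq6} and \eqref{eq7} for the other representation and the compatibilities \eqref{mppl2} and \eqref{mppl3}. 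Cases related by antisymmetry of $[Y,Z]$ are redundant.

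\textbf{Step 3 (axiom \eqref{pl2}).} The analysis is parallel but the bookkeeping is heavier because the sub-adjacent bracket on $A\oplus B$ appears on the left-hand side; under the splitting $\{X,Y\}=X\circ Y-Y\circ X+[X,Y]$ this combines terms from $l_A,r_A,\rho_A$ (and the swapped versions), and in particular is precisely what gives rise to the brackets $\{-,-\}_B$ and $\{-,-\}_A$ of \eqref{eq5} appearing in \eqref{mppl8} and \eqref{mppl6}. The pure cases give \eqref{pl2} for $A$ and $B$. The mixed cases, after collecting, yield \eqref{eq8} and \eqref{eq9} for both representations (the $B$-projection of the ``two-$A$-one-$B$'' configurations and vice versa), together with the four remaining compatibility conditions \eqref{mppl5}, \eqref{mppl6}, \eqref{mppl7}, \eqref{mppl8}. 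Converting the argument to ``iff'' requires noting that each projected identity is independent of the others, since it isolates a distinct polynomial expression in $l_A,r_A,\rho_A,l_B,r_B,\rho_B$.

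\textbf{Main obstacle.} The essential difficulty is purely combinatorial bookkeeping: there are eight input configurations for each of \eqref{pl1} and \eqref{pl2}, each splitting into two projected equations, and one must match every resulting identity with the correct numbered condition while avoiding double-counting among configurations related by antisymmetry or by the $A\leftrightarrow B$ symmetry of the setup. A disciplined table indexed by (axiom, input type, projection) is the only safeguard; once it is drawn up, no step is individually hard, and the iff follows since the projected equations are in bijection with the listed conditions \eqref{mplo}--\eqref{mppl6} supplemented by \eqref{eq6}--\eqref{eq9} for the two representations.
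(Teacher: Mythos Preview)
Your approach is correct and is exactly what the paper does: its proof reads in full ``The checking is straightforward,'' and your case-by-case expansion with projection to $A$- and $B$-components is precisely that straightforward check, carried out with more detail than the paper provides. One small labeling slip to fix in Step~1: in the ``two from $A$, one from $B$'' Jacobi case the $B$-projection yields $\rho_A([x,y]_A)=[\rho_A(x),\rho_A(y)]$ (i.e., $(B;\rho_A)$ is a Lie representation of $(A,[-,-]_A)$), not the $\rho_B$ statement you wrote, since $\rho_B$ takes inputs from $B$; the rest of your identifications are correct.
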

   \begin{proof}
The checking is straightforward.
   \end{proof}

\begin{defi}  \label{MT:pl}
A {\bf (standard) Manin triple of post-Lie algebras
associated  to
 the \mysymbolnew bilinear form} is a triple  $\big((A \oplus   A^{*},\!\circ_{A \oplus   A^{*}}, \![-,-]_{A \oplus   A^{*}},\mathcal{B}_{d}),
 (A,\circ_A, [-,-]_A)$, $(A^{*}$, $\circ_{A^{*}},[-,-]_{A^{*}})\big)$  for which
\begin{enumerate}
\item as vector spaces,  $A \oplus A^{*}$  is the direct sum of
$A$  and  $A^{*}$; \item  $(A,\circ_A, [-,-]_A)$  and
$(A^{*},\circ_{A^{*}},[-,-]_{A^{*}})$  are post-Lie subalgebras of
the post-Lie algebra $(A \oplus A^{*},\circ_{A \oplus   A^{*}}$,
$[-,-]_{A \oplus A^{*}})$; \item  $(A \oplus   A^{*},
\circ_{A \oplus A^{*}}, [-,-]_{A \oplus A^{*}},\mathcal{B}_{d})$
is a \gphpl algebra with $\mathcal{B}_{d}$ given by Eq.~\eqref{mtorbl}.
\end{enumerate}
\mlabel{d:mtpl}
\end{defi}

\begin{lem}\mlabel{lem:111}
Let $\big((A \oplus   A^{*}, \!\circ_{A \oplus   A^{*}}, \![-,-]_{A
\oplus   A^{*}},\mathcal{B}_{d}),
 (A,\circ_A, [-,-]_A)$, $(A^{*}$, $\circ_{A^{*}},[-,-]_{A^{*}})\big)$
be a Manin triple of post-Lie algebras associated to the
invariant bilinear form $\mathcal{B}_{d}$. Then there exists a compatible \mysymbol
algebra structure $(A\oplus A^*,\rhd_{A\oplus A^*},\lhd_{A\oplus
A^*},[-,-]_{A\oplus A^*})$ with $\rhd_{A\oplus A^*},\lhd_{A\oplus
A^*}$ respectively defined by Eqs.~\eqref{comDpspl1} and
\eqref{comDpspl2} through $\mathcal B_d$. Moreover, $(A,
\rhd_{A}=:\!\rhd_{A\oplus A^*}|_{A\otimes A},
\lhd_{A}=:\!\lhd_{A\oplus A^*}|_{A\otimes A}, [-,-]_A)$ and $(A^*,\!
\rhd_{A^*}=:\rhd_{A\oplus A^*}|_{A^*\otimes A^*},
\lhd_{A^*}=:\lhd_{A\oplus A^*}|_{A^*\otimes A^*}$, $[-,-]_{A^*})$
are \mysymbol subalgebras whose associated horizontal post-Lie
algebras are $(A,\circ_A$, $[-,-]_A)$ and $(A^{*}$,
$\circ_{A^{*}},[-,-]_{A^{*}})$ respectively.
\end{lem}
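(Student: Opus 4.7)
The plan is to apply Proposition~\mref{compaDPSPL1} to the \gphpl algebra $(A\oplus A^*,\circ_{A\oplus A^*},[-,-]_{A\oplus A^*},\mathcal{B}_d)$ furnished by the Manin triple, and then exploit the Lagrangian isotropy of $A$ and $A^*$ with respect to $\mathcal{B}_d$ to show that the resulting global operations $\rhd_{A\oplus A^*}$ and $\lhd_{A\oplus A^*}$ preserve the subspaces $A$ and $A^*$.

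First, a direct appeal to Proposition~\mref{compaDPSPL1} produces a compatible \mysymbol algebra structure on $(A\oplus A^*,\circ_{A\oplus A^*},[-,-]_{A\oplus A^*})$ with $\rhd_{A\oplus A^*}$ and $\lhd_{A\oplus A^*}$ given by \meqref{comDpspl1} and \meqref{comDpspl2} through $\mathcal{B}_d$. This takes care of the existence claim for the ambient structure.

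The main step, where the subtlety lies, is the closure of $A$ and $A^*$ under $\rhd_{A\oplus A^*}$ and $\lhd_{A\oplus A^*}$. The formula~\eqref{mtorbl} makes $A$ and $A^*$ into totally isotropic (hence Lagrangian) subspaces for $\mathcal{B}_d$, so an element $v\in A\oplus A^*$ lies in $A$ if and only if $\mathcal{B}_d(v,y')=0$ for all $y'\in A$, and lies in $A^*$ if and only if $\mathcal{B}_d(v,b')=0$ for all $b'\in A^*$. Now for $x,y\in A$ and any test $y'\in A$, using \meqref{comDpspl1} together with the fact that $A$ is already a post-Lie subalgebra for $\circ_{A\oplus A^*}$, I would compute
\[
\mathcal{B}_d(x\rhd_{A\oplus A^*} y,\,y')\;=\;-\mathcal{B}_d\bigl(y,\,x\circ_{A\oplus A^*}y'-y'\circ_{A\oplus A^*}x\bigr)\;=\;-\mathcal{B}_d\bigl(y,\,x\circ_A y'-y'\circ_A x\bigr)\;=\;0,
\]
since both arguments on the right lie in the isotropic subspace $A$. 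This forces $x\rhd_{A\oplus A^*}y\in A$. The same manipulation with \meqref{comDpspl2} gives $\mathcal{B}_d(x\lhd_{A\oplus A^*}y,y')=\mathcal{B}_d(x,y'\circ_A y)=0$, so $x\lhd_{A\oplus A^*}y\in A$ as well. A symmetric argument, exchanging the roles of $A$ and $A^*$ and using that $A^*$ is a post-Lie subalgebra, shows that $A^*$ is closed under both operations.

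Once closure is in hand, the remaining assertions follow at once: the \mysymbol algebra axioms \meqref{DPSPL1}--\meqref{DPSPL5} descend to the subalgebras because they are already satisfied on $A\oplus A^*$ and the restricted operations never exit $A$ or $A^*$, so $(A,\rhd_A,\lhd_A,[-,-]_A)$ and $(A^*,\rhd_{A^*},\lhd_{A^*},[-,-]_{A^*})$ are honest \mysymbol subalgebras. The identification of the associated horizontal post-Lie algebras is then immediate from Corollary~\mref{co:aa} and the definitions: for $x,y\in A$,
\[
x\rhd_A y+x\lhd_A y\;=\;x\circ_{A\oplus A^*}y\;=\;x\circ_A y,
\]
and likewise on $A^*$. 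The only conceptual hurdle in the proof is the closure step, which is resolved cleanly by combining the Lagrangian isotropy of $A$ and $A^*$ with the post-Lie-subalgebra property built into the Manin triple.
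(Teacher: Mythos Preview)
Your proof is correct and follows essentially the same approach as the paper: invoke Proposition~\mref{compaDPSPL1} for the ambient structure, then use the isotropy of $A$ (resp.\ $A^*$) together with the post-Lie subalgebra property to test $x\rhd_{A\oplus A^*}y$ and $x\lhd_{A\oplus A^*}y$ against elements of $A$ via \meqref{comDpspl1}--\meqref{comDpspl2} and conclude closure. The paper's version decomposes $x\rhd_A y=m+a^*$ and shows $a^*=0$ by the same pairing computation, which is just a cosmetic rephrasing of your argument.
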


\begin{proof}
The first conclusion follows from Proposition \mref{compaDPSPL1}. Let  $x, y \in A$. Suppose that  $x \rhd_{A} y=m+a^{*}$, $x \lhd_{A} y=n+b^{*}$, where  $m,n \in A$  and $a^{*},b^* \in A^{*}$. Then we have
\begin{align*}
\left\langle a^{*}, z\right\rangle &=\mathcal{B}_{d}\left(x \rhd_{A} y, z\right) \overset{\eqref{comDpspl1}}{=} -\mathcal{B}_{d}(y, x \circ_{A} z- z \circ_{A} x)=0,   \\
\left\langle b^{*}, z\right\rangle &=\mathcal{B}_{d}\left(x \lhd_{A} y, z\right) \overset{\eqref{comDpspl2}}{=}\mathcal{B}_{d}(x, z \circ_{A} y)=0, \; \forall z \in A.
\end{align*}
Hence  $ a^{*}=0 = b^{*}$ and thus  $ x \rhd_{A} y,\; x \lhd_{A} y  \in A$. So  $(A,
\rhd_{A},
\lhd_{A}, [-,-]_A)$  is a  \mysymbol subalgebra. Similarly,  $ (A^*,
\rhd_{A^*},
\lhd_{A^*}, [-,-]_{A^*}) $   is also a  \mysymbol subalgebra. Obviously, the associated horizontal post-Lie
algebras  of  $(A,
\rhd_{A},
\lhd_{A}, [-,-]_A)$  and  $ (A^*,
\rhd_{A^*},
\lhd_{A^*}, [-,-]_{A^*}) $  are  $ \left(A,\circ_{A} ,[-,-]_{A}\right) $  and  $ \left(A^{*},\circ_{A^{*}},[-,-]_{A^{*}}\right)$  respectively.
\end{proof}

\begin{pro} \mlabel{mtsplb}
Let $(A,\rhd_A,\lhd_{A}, [-,-]_A)$  and
$(A^{*},\rhd_{A^{*}},\lhd_{A^{*}}, [-,-]_{A^{*}})$ be
\mysymbol algebras, and let their associated horizontal post-Lie
algebras be $(A,\circ_A$, $[-,-]_A)$ and $(A^{*}$,
$\circ_{A^{*}}$, $[-,-]_{A^{*}})$ respectively. Then there is a
Manin triple  $\big((A \oplus   A^{*},\!\circ_{A \oplus   A^{*}},
\![-,-]_{A \oplus   A^{*}},\mathcal{B}_{d}),
 (A,\circ_A$, $[-,-]_A)$, $(A^{*}$, $\circ_{A^{*}},[-,-]_{A^{*}})\big)$
 of post-Lie algebras associated to  the invariant
bilinear form $\mathcal{B}_{d}$ such that the compatible \mysymbol algebra structure
$(A\oplus A^*,\rhd_{A\oplus A^*},\lhd_{A\oplus A^*},[-,-]_{A\oplus
A^*})$ with $\rhd_{A\oplus A^*},\lhd_{A\oplus A^*}$ respectively
defined by Eqs.~\eqref{comDpspl1} and \eqref{comDpspl2} through
$\mathcal B_d$ contains $(A,\rhd_A,\lhd_A,[-,-]_A)$  and
$(A^{*},\rhd_{A^{*}},\lhd_{A^{*}},$ $ [-,-]_{A^{*}})$  as \mysymbol
subalgebras if and only if  $(A,
A^{*},L_{\rhd_{A}}^{*}-R_{\lhd_{A}}^{*},-R_{\lhd_{A}}^{*},\mathrm{ad}_{A}^{*},
L_{\rhd_{A^{*}}}^{*}-R_{\lhd_{A^{*}}}^{*},-R_{\lhd_{A^{*}}}^{*},$ $\mathrm{ad}_{A^{*}}^{*})$
is a matched pair of post-Lie algebras.
\end{pro}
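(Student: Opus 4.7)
The strategy is to combine three tools already established in the excerpt: the characterization of \mysymbol algebras in terms of post-Lie algebras with representations on dual spaces (Proposition \mref{dpsplrep}), the matched pair construction for post-Lie algebras (Proposition \mref{mppstl}), and the relation between \gphpl algebras and compatible \mysymbol algebras (Proposition \mref{compaDPSPL1}, together with Lemma \mref{lem:111}).

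For the forward direction, assume the Manin triple. By Lemma \mref{lem:111}, the induced compatible \mysymbol algebra on $A\oplus A^{*}$ restricts to $(A,\rhd_{A},\lhd_{A},[-,-]_{A})$ and $(A^{*},\rhd_{A^{*}},\lhd_{A^{*}},[-,-]_{A^{*}})$, and $(A,\circ_{A},[-,-]_{A})$, $(A^{*},\circ_{A^{*}},[-,-]_{A^{*}})$ are post-Lie subalgebras of $(A\oplus A^{*},\circ_{A\oplus A^{*}},[-,-]_{A\oplus A^{*}})$. Thus Proposition \mref{mppstl} yields a matched pair with some representations $l_{A},r_{A},\rho_{A}$ and $l_{A^{*}},r_{A^{*}},\rho_{A^{*}}$. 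I would then identify these as the prescribed dual ones: unpacking $\mathcal{B}_{d}([x,b^{*}]_{A\oplus A^{*}},y) = \mathcal{B}_{d}(x,[b^{*},y]_{A\oplus A^{*}})$ and pairing with elements of $A$ or $A^{*}$ forces $\rho_{A^{*}}(b^{*})x = -\mathrm{ad}_{A^{*}}^{*}(x)b^{*}$ in disguise, i.e.\ $\rho_{A}=\mathrm{ad}_{A}^{*}$, and similarly $\rho_{A^{*}}=\mathrm{ad}_{A^{*}}^{*}$. Likewise Eq.~\meqref{2-cocycle} applied to mixed triples, combined with Eqs.~\meqref{comDpspl1}--\meqref{comDpspl2} for the restriction of the compatible \mysymbol algebra, pins down $l_{A}=L_{\rhd_{A}}^{*}-R_{\lhd_{A}}^{*}$, $r_{A}=-R_{\lhd_{A}}^{*}$ and the symmetric pair on $A^{*}$.

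For the backward direction, given the matched pair with the prescribed dual representations, Proposition \mref{mppstl} produces the post-Lie algebra $A\bowtie A^{*}$. What must be checked is that $\mathcal{B}_{d}$ is an \mysymbolnewi on this post-Lie algebra; then Lemma \mref{lem:111} and Proposition \mref{compaDPSPL1} guarantee that the associated compatible \mysymbol algebra on $A\oplus A^{*}$ restricts to the two given \mysymbol algebras (Eqs.~\meqref{comDpspl1}--\meqref{comDpspl2} reproduce $\rhd_{A},\lhd_{A},\rhd_{A^{*}},\lhd_{A^{*}}$ because the representations are exactly the dual ones). Both invariance conditions \meqref{lieinv} and \meqref{2-cocycle} for $\mathcal{B}_{d}$ split into cases by writing arguments as $x+a^{*}$, $y+b^{*}$, $z+c^{*}$; every resulting identity becomes a tautology via Eq.~\meqref{dualmap}, for instance
\[
\mathcal{B}_{d}\bigl(\mathrm{ad}_{A}^{*}(x)b^{*},\, y\bigr)=\langle \mathrm{ad}_{A}^{*}(x)b^{*},\, y\rangle=-\langle b^{*},[x,y]_{A}\rangle=-\mathcal{B}_{d}\bigl(b^{*},[x,y]_{A}\bigr),
\]
and analogously for the $l_{A},r_{A}$ terms through $L_{\rhd_{A}}^{*},R_{\lhd_{A}}^{*}$.

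The main obstacle is the bookkeeping: there are nine mixed arrangements of $(A,A^{*})$-arguments to check in each of \meqref{lieinv} and \meqref{2-cocycle}, and some cross-terms split between the $A$- and $A^{*}$-components of $A\bowtie A^{*}$. The essential point, however, is conceptual rather than computational: the specific coadjoint/dual form of the representations $(l_{A},r_{A},\rho_{A})$ and $(l_{A^{*}},r_{A^{*}},\rho_{A^{*}})$ is precisely what makes each term in the invariance equations collapse via \meqref{dualmap}, so that no matched pair compatibility from \meqref{mplo}--\meqref{mppl6} is needed beyond what is already assumed. In particular, the compatibility equations of the matched pair and the invariance of $\mathcal{B}_{d}$ are, under this choice of representations, two sides of the same coin — which is the essence of the equivalence.
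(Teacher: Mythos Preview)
Your proposal is correct and follows essentially the same strategy as the paper's proof. In the forward direction the paper computes the mixed products $x\circ_{A\oplus A^{*}}b^{*}$ directly from the identities \meqref{symin1}--\meqref{symin2} (consequences of Proposition~\mref{compaDPSPL1}) rather than first invoking an unspecified matched pair and then identifying the representations, but this is a difference of presentation, not of substance; in the backward direction both arguments build $A\bowtie A^{*}$ via Proposition~\mref{mppstl}, verify that $\mathcal{B}_{d}$ is invariant by unwinding \meqref{dualmap}, and then appeal to Lemma~\mref{lem:111} for the restriction statement. (One small slip: your displayed equation ``$\rho_{A^{*}}(b^{*})x=-\mathrm{ad}_{A^{*}}^{*}(x)b^{*}$'' is not type-correct as written, since $\mathrm{ad}_{A^{*}}^{*}$ takes arguments in $A^{*}$; the intended identification $\rho_{A^{*}}=\mathrm{ad}_{A^{*}}^{*}$ is clear from context.)
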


\begin{proof}
($\Longrightarrow$) Let $x, y \in A, a^{*}, b^{*} \in A^{*}$.
 By Proposition \mref{compaDPSPL1}, we have
 \vsb
\begin{align*}
\mathcal{B}_{d}(x \circ_{A\oplus A^*} b^{*},a^{*}) &\overset{\eqref{symin1}}{=} \mathcal{B}_{d}(x ,a^{*}\lhd_{A^{*}}  b^{*}) = \left \langle x ,a^{*}\lhd_{A^{*}}   b^{*} \right \rangle  = \left \langle -R_{\lhd_{A^*}}^{*}(b^{*})x , a^{*} \right \rangle, \\
  \mathcal{B}_{d}(x \circ_{A\oplus A^*} b^{*},y) &\overset{\eqref{symin2}}{=} \mathcal{B}_{d}(b^{*} ,  y \lhd_{A} x -x \rhd_{A} y)  = \left \langle (L_{\rhd_{A}}^{*}-R_{\lhd_{A}}^{*})(x)b^{*} , y \right \rangle.
\end{align*}
So $x \circ_{A\oplus A^*} b^{*} =
(L_{\rhd_{A}}^{*}-R_{\lhd_{A}}^{*})(x)b^{*} -
R_{\lhd_{A^*}}^{*}(b^{*})x$. Similarly,
\vsb
\begin{equation*}
a^*\circ_{A\oplus A^*}y= (L_{\rhd_{A^{*}}}^{*}-R_{\lhd_{A^{*}}}^{*})(a^{*}) y - R_{\lhd_{A}}^{*}(y) a^{*},\quad [x,b^{*}]_{A\oplus A^*} =
\mathrm{ad}_{A}^{*}(x) b^{*} -\mathrm{ad}_{A^{*}}^{*}(b^{*}) x.
\end{equation*}
Therefore the post-Lie algebra multiplications $\circ_{A\oplus A^*}$
and $[-,-]_{A\oplus A^*}$  on $A \oplus  A^{*}$ are decided by
\vsb
\begin{small}
\begin{align}
(x+a^{*}) \circ_{A\oplus A^*} (y+b^{*}) &=   x \circ_A y+ (L_{\rhd_{A^{*}}}^{*}-R_{\lhd_{A^{*}}}^{*})(a^{*}) y- R_{\lhd _{A^{*}}}^{*}(b^{*}) x \nonumber\\
&\quad + a^{*} \circ_{A^*} b^{*}+ (L_{\rhd_{A}}^{*}-R_{\lhd_{A}}^{*})(x) b^{*} - R_{\lhd_{A}}^{*}(y) a^{*}, \mlabel{r7}\\
[x+a^{*}, y+b^{*}]_{A\oplus A^*} &= [x,
y]+\mathrm{ad}_{A^{*}}^{*}(a^{*}) y-\mathrm{ad}_{A^{*}}^{*}(b^{*})
x+[a^{*}, b^{*}]+\mathrm{ad}_{A}^{*}(x)
b^{*}-\mathrm{ad}_{A}^{*}(y) a^{*}.\mlabel{r8}
\end{align}
\end{small}
By Proposition \mref{mppstl}, $(A,
A^{*},L_{\rhd_{A}}^{*}-R_{\lhd_{A}}^{*},-R_{\lhd_{A}}^{*},
\mathrm{ad}_{A}^{*},
L_{\rhd_{A^{*}}}^{*}-R_{\lhd_{A^{*}}}^{*},-R_{\lhd_{A^{*}}}^{*}$,
$\mathrm{ad}_{A^{*}}^{*})$ is a matched pair of post-Lie algebras.

($\Longleftarrow$)  By Proposition~\mref{mppstl}, if
$(A, A^{*}\!,L_{\rhd_{A}}^{*}\!-R_{\lhd_{A}}^{*}\!,-R_{\lhd_{A}}^{*},
\mathrm{ad}_{A}^{*},
L_{\rhd_{A^{*}}}^{*}-R_{\lhd_{A^{*}}}^{*},-R_{\lhd_{A^{*}}}^{*}$,
$\mathrm{ad}_{A^{*}}^{*})$ is a matched pair of post-Lie algebras,
then there is a post-Lie algebra $(A \bowtie A^{*},\circ_{A\oplus
A^*}, [-,-]_{A\oplus A^*})$ with $\circ_{A\oplus A^*}$ and
$[-,-]_{A\oplus A^*}$ respectively defined by
Eqs.~(\mref{r7})--(\mref{r8}). It is straightforward to show that the nondegenerate
 symmetric bilinear form $\mathcal{B}_{d}$ defined by Eq.~(\mref{mtorbl}) is
 invariant on $(A \bowtie A^{*},\circ_{A\oplus
A^*}, [-,-]_{A\oplus A^*})$. Thus $\big((A \oplus
A^{*},\!\circ_{A \oplus   A^{*}}, \![-,-]_{A \oplus
A^{*}},\mathcal{B}_{d}),
 (A,\circ_A$, $[-,-]_A)$, $(A^{*}$, $\circ_{A^{*}},[-,-]_{A^{*}})\big)$ is a Manin triple  of post-Lie algebras
associated to the \mysymbolnew bilinear form. By
Lemma~\mref{lem:111}, with the compatible \mysymbol algebra
structure $(A\oplus A^*,\rhd_{A\oplus A^*},\lhd_{A\oplus
A^*},[-,-]_{A\oplus A^*})$ in which $\rhd_{A\oplus
A^*},\lhd_{A\oplus A^*}$ are respectively defined by
Eqs.~\eqref{comDpspl1} and \eqref{comDpspl2} through $\mathcal
B_d$, the two \mysymbol subalgebra structures on $A$ and $A^*$ are
exactly $(A,\rhd_A,\lhd_{A},[-,-]_A)$  and
$(A^{*},\rhd_{A^{*}},\lhd_{A^{*}}, [-,-]_{A^{*}})$ respectively.
\end{proof}

Recall that a {\bf Lie coalgebra} is a pair  $(\mathfrak{g}, \Delta)$,  where $\mathfrak{g}$  is a vector space and  $\Delta: \mathfrak{g} \rightarrow \mathfrak{g} \otimes \mathfrak{g}$  is a linear map satisfying the equations
\vsb
\begin{align*}
\Delta   + \tau \Delta &=0,\quad (\mathrm{id}+\sigma_{123}+\sigma_{123}^{2} )( \Delta \otimes \mathrm{id} ) \Delta   = 0,
\end{align*}
where  $\sigma_{123}(x \otimes y \otimes z) =y \otimes z \otimes x$  for all $x, y, z \in \mathfrak{g}$.
\vsb
\begin{defi}  A  \textbf{\mysymbol coalgebra} is a quadruple $(A,\delta_{\rhd},\delta_{\lhd}, \Delta)$  where  $(A, \Delta)$  is a Lie coalgebra and  $\delta_{\rhd},\delta_{\lhd}: A \rightarrow A \otimes A  $ are linear maps satisfying
the following equations.
\vsb
\begin{align}
(\mathrm{id} \otimes \Delta) \delta_{\lhd}  &= (\Delta \otimes  \mathrm{id}) \delta_{\lhd} +   (  \tau \otimes \mathrm{id}  )(\mathrm{id}  \otimes  \Delta   ) \delta_{\lhd},\mlabel{LDLC1}\\
(   \mathrm{id}\otimes   (\delta_{\lhd}+\tau \delta_{\lhd}))  \Delta  &=  (   \Delta \otimes  \mathrm{id} )(\delta_{\lhd} +\tau\delta_{\lhd} )    = 0,\mlabel{LDLC2}\\
 (\mathrm{id} \otimes \Delta)\delta_{\bullet} &= (\delta_{\circ}\otimes  \mathrm{id}) \Delta+ (\tau \otimes \mathrm{id}) (\mathrm{id} \otimes   \delta_{\bullet})\Delta,\mlabel{LDLC3}\\
(\mathrm{id}  \otimes  \delta_{\lhd} )\delta_{\rhd}  &= (  \delta_{\bullet}\otimes  \mathrm{id} )\delta_{\lhd}   + (\tau \otimes \mathrm{id})(\mathrm{id} \otimes \delta_{\circ} )\delta_{\lhd} -( \mathrm{id} \otimes \delta_{\lhd}) \Delta,\mlabel{LDLC4}\\
(\mathrm{id}  \otimes  \mathrm{id}  -   \tau\otimes \mathrm{id} )(\delta_{\circ}\otimes  \mathrm{id}) \delta_{\rhd} &= (\mathrm{id}  \otimes  \mathrm{id}  -   \tau\otimes \mathrm{id} )(\mathrm{id} \otimes   \delta_{\rhd})\delta_{\rhd} - (\Delta \otimes \mathrm{id}  )\delta_{\circ}\nonumber\\
&\quad+ (   \tau\otimes \mathrm{id} -\mathrm{id}  \otimes  \mathrm{id}  )(\mathrm{id} \otimes   \delta_{\lhd})\Delta,\mlabel{LDLC5}
\vsc
\end{align}
\vsc
where $\delta_{\circ}= \delta_{\rhd} + \delta_{\lhd}$,
$\delta_{\bullet}=  \delta_{\rhd} - \tau\delta_{\lhd} $.
\end{defi}

Let  $A$  be a finite-dimensional vector space and  $\delta: A \rightarrow A \otimes A$  be a linear map. Let $\cdot_{A^{*}}:  A^{*} \otimes A^{*} \rightarrow A^{*}$  be the linear dual of $\delta$, that is, the operation $\cdot_{A^{*}}$ is defined by
\begin{align}
\langle a^* \cdot_{A^{*}} b^*, x \rangle := \langle a^* \otimes b^*, \delta(x) \rangle, \quad \forall x \in A,a^*, b^* \in A^{*}.\mlabel{ldm}
\end{align}

\begin{pro}\mlabel{dpsplco}
Let  $A$  be a finite-dimensional vector space. Let
$\delta_{\rhd},\delta_{\lhd},\Delta: A \rightarrow A \otimes A$ be
linear maps, and  $\rhd_{A^{*}},\lhd_{A^{*}},[-,-]_{A^{*}}: A^{*}
\otimes A^{*} \rightarrow A^{*}$ be their linear duals.
Then  $(A,\delta_{\rhd},\delta_{\lhd},\Delta) $ is a  \mysymbol coalgebra if and only if
 $(A^{*}, \rhd_{A^{*}}, \lhd_{A^{*}}, [-,-]_{A^{*}}) $ is a   \mysymbol algebra.
\end{pro}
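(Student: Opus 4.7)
The plan is to prove Proposition \mref{dpsplco} by direct dualization: since $A$ is finite-dimensional, $(A^*)^* \cong A$, and by the non-degeneracy of the canonical pairing, an identity in $A$ (holding for all $x$) is equivalent to its dualized identity in $A^*$ (holding for all $u^*, v^*, w^*$). The key dictionary, coming directly from Eq.~\eqref{ldm}, is
\begin{align*}
\langle u^* \otimes v^*, \delta_{\rhd}(x) \rangle &= \langle u^* \rhd_{A^*} v^*, x \rangle,\quad
\langle u^* \otimes v^*, \delta_{\lhd}(x) \rangle = \langle u^* \lhd_{A^*} v^*, x \rangle,\\
\langle u^* \otimes v^*, \Delta(x) \rangle &= \langle [u^*, v^*]_{A^*}, x \rangle,
\end{align*}
which extends to the derived coproducts: $\delta_\circ = \delta_\rhd + \delta_\lhd$ pairs with $\circ_{A^*} = \rhd_{A^*} + \lhd_{A^*}$, and $\delta_\bullet = \delta_\rhd - \tau\delta_\lhd$ pairs with $u^* \bullet_{A^*} v^* := u^* \rhd_{A^*} v^* - v^* \lhd_{A^*} u^*$, which matches Proposition~\mref{pspleuqvi}.

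First I would verify that the Lie coalgebra axioms on $\Delta$ (cosymmetry and coJacobi) dualize precisely to antisymmetry and the Jacobi identity for $[-,-]_{A^*}$; this is standard. Then for each equation LDLC$i$, $i=1,\ldots,5$, I would apply both sides to an arbitrary $x \in A$ and pair with an arbitrary $u^* \otimes v^* \otimes w^* \in A^* \otimes A^* \otimes A^*$. The factor $\tau \otimes \mathrm{id}$ simply interchanges the first two test covectors, and the placement of $\Delta$ or $\delta_\rhd/\delta_\lhd$ on a given tensor slot selects which pair of covectors forms a bracket or $\rhd/\lhd$ product. For example, LDLC3 becomes
\begin{align*}
u^* \bullet_{A^*} [v^*, w^*]_{A^*} = [u^* \circ_{A^*} v^*, w^*]_{A^*} + [v^*, u^* \bullet_{A^*} w^*]_{A^*},
\end{align*}
which is exactly DPSPL3 (with $x=u^*$, $y=v^*$, $z=w^*$). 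The translations LDLC4 $\leftrightarrow$ DPSPL4 and LDLC2 $\leftrightarrow$ DPSPL2 follow by the same pattern, and the two pieces of LDLC2 give the two pieces of DPSPL2.

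The steps requiring the most care are LDLC1 and LDLC5. For LDLC1, direct dualization yields $u^* \lhd_{A^*} [v^*,w^*]_{A^*} = [u^*,v^*]_{A^*} \lhd_{A^*} w^* + v^* \lhd_{A^*} [u^*,w^*]_{A^*}$, which differs from DPSPL1 in the last term; one converts $v^* \lhd [u^*,w^*]$ into $[w^*,u^*] \lhd v^*$ using the second identity in DPSPL2 (equivalently LDLC2), so DPSPL1 follows once the full coalgebra system holds. For LDLC5, dualizing the operator $(\mathrm{id}\otimes\mathrm{id} - \tau\otimes\mathrm{id})(\delta_\circ \otimes \mathrm{id})\delta_\rhd$ produces $(u^* \circ_{A^*} v^* - v^* \circ_{A^*} u^*) \rhd_{A^*} w^*$, which one recognizes as $\{u^*,v^*\} \rhd_{A^*} w^* - [u^*,v^*]_{A^*} \rhd_{A^*} w^*$ using Eq.~\eqref{eq:curbra}; rearranging and using $[u^*,v^*] \rhd_{A^*} w^* - [u^*,v^*]_{A^*} \circ_{A^*} w^* = -[u^*,v^*]_{A^*} \lhd_{A^*} w^*$ matches the result exactly with DPSPL5. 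Thus the entire system of coalgebra identities on $A$ is equivalent, by non-degeneracy of the pairing, to the system of algebra identities on $A^*$, proving both directions of the equivalence simultaneously.

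The main obstacle is not conceptual but purely notational: tracking the placement of $\tau$ in nested coproducts and reassembling the dualized expression into the precise form of DPSPL$i$ (occasionally via another DPSPL$j$). Once the dictionary above is established and the $\tau$-bookkeeping is carried out carefully, the proof reduces to routine verification.
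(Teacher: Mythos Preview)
Your proposal is correct and carries out exactly the direct dualization that the paper intends: the paper's own proof is simply ``It is straightforward,'' and your pairing argument via Eq.~\eqref{ldm} (including the observation that LDLC1 matches DPSPL1 only modulo DPSPL2/LDLC2, and the unpacking of $\{-,-\}$ in LDLC5) is precisely the routine verification this one-liner is abbreviating. The approach is the same; you have just written out the details the authors omit.
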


\begin{proof}
It is straightforward.
\end{proof}

\begin{defi} A  {\bf \mysymbol bialgebra} is a vector space  $A$  together with six linear maps
\begin{align*}
\rhd,\lhd,[-,-]: A \otimes A \rightarrow A, \quad \delta_{\rhd},\delta_{\lhd},\Delta: A \rightarrow A \otimes A
\end{align*}
satisfying the following conditions.
    \begin{enumerate}
    \item $(A,\rhd,\lhd, [-,-])$  is a  \mysymbol  algebra.
    \item $(A,\delta_{\rhd},\delta_{\lhd}, \Delta)$ is a  \mysymbol coalgebra.
    \item For all $x,y \in A$, the following equalities hold.
\begin{small}
\begin{align}
&\Delta ([x, y])=(\operatorname{ad}(x) \otimes \mathrm{id}+\mathrm{id} \otimes \operatorname{ad}(x)) \Delta (y)-(\operatorname{ad}(y) \otimes \mathrm{id}+\operatorname{id} \otimes \operatorname{ad}(y)) \Delta (x),\mlabel{Liebc}  \\
&\Delta(x \circ y)   =  \big( L_{\circ}(x) \otimes \mathrm{id} + \mathrm{id} \otimes  L_{\bullet}(x)\big)\Delta(y)+(\mathrm{id}\otimes \mathrm{ad}(y) + \mathrm{ad}(y)\otimes \mathrm{id}) \delta_{\lhd}(x), \mlabel{DPSPLB1} \\
&\Delta(x \bullet y)  = \big( L_{\bullet}(x) \otimes \mathrm{id} + \mathrm{id} \otimes L_{\bullet}(x)\big)\Delta(y)-(\mathrm{id} \otimes  \mathrm{ad}(y)) \tau\delta_{\lhd}(x)+ (\mathrm{ad}(y)\otimes \mathrm{id}) \delta_{\lhd}(x), \mlabel{DPSPLB2}\\
&\delta_{\bullet}([x,y]) = \big(\mathrm{id} \otimes \mathrm{ad}(x)\big)\delta_{\bullet}(y) - \big(\mathrm{id} \otimes \mathrm{ad}(y)\big)\delta_{\bullet}(x) + (R_{\lhd}(x) \otimes  \mathrm{id})\Delta(y) - (R_{\lhd}(y) \otimes  \mathrm{id})\Delta(x),\mlabel{DPSPLB3}\\
&\delta_{\circ}([x,y]) = \big(\mathrm{id} \otimes \mathrm{ad}(x)\big)\delta_{\circ}(y) - \big(\mathrm{id} \otimes \mathrm{ad}(y)\big)\delta_{\bullet}(x) + (R_{\lhd}(x) \otimes  \mathrm{id}) \Delta(y) + (L_{\lhd}(y) \otimes  \mathrm{id})\Delta(x),\mlabel{DPSPLB4}\\
&\delta_{\bullet}(x \circ y)  =   \big(\mathrm{id}\otimes L_{\circ}(x) +  (L_{\rhd}+\mathrm{ad})(x) \otimes \mathrm{id}  \big)\delta_{\bullet}(y) -\big(  R_{\lhd}(y)\otimes \mathrm{id}\big)\tau\delta_{\lhd}(x)+\big(\mathrm{id}\otimes R_{\circ} (y)\big)(\delta_{\rhd}+ \Delta)(x), \mlabel{DPSPLB5}\\
&\delta_{\circ}(x \bullet y)  =   \big(\mathrm{id}\otimes L_{\bullet}(x) +  (L_{\rhd}+\mathrm{ad})(x) \otimes \mathrm{id} \big)\delta_{\circ}(y) -\big(L_{\lhd}(y)\otimes \mathrm{id}\big)\delta_{\lhd}(x)+\big(\mathrm{id}\otimes R_{\bullet} (y)\big)(\delta_{\rhd}+ \Delta)(x), \mlabel{DPSPLB6}\\
&\delta_{\lhd}(\{x, y\}) = \big(\mathrm{id} \otimes L_{\bullet}(x) +  L_{\circ}(x) \otimes \mathrm{id} \big)\delta_{\lhd}(y) - \big(\mathrm{id} \otimes L_{\bullet}(y) + L_{\circ}(y) \otimes \mathrm{id} \big)\delta_{\lhd}(x), \mlabel{DPSPLB7}\\
&(\delta_{\circ}\!- \!\tau\delta_{\circ} \!+\! \Delta)(x \lhd y)\! =\!   ( \mathrm{id} \otimes L_{\lhd}(x))\delta_{\bullet}(y)\!+\!(\mathrm{id}\otimes R_{\lhd}(y))\delta_{\circ}(x) \!-\! (L_{\lhd}(x) \otimes
\mathrm{id})\tau\delta_{\bullet}(y)   \!-\! (R_{\lhd}(y) \otimes
\mathrm{id})\tau\delta_{\circ}(x),  \mlabel{DPSPLB8}
\end{align}
\end{small}
    \end{enumerate}
where $\{-,-\}$, $\circ$ and $\bullet$ are defined by
Eqs.~\eqref{eq:curbra},~\eqref{hpla} and \eqref{vpla}
respectively, $\delta_{\circ}:= \delta_{\rhd} + \delta_{\lhd}$,
$\delta_{\bullet}:=  \delta_{\rhd} - \tau\delta_{\lhd} $. We
denote a   \mysymbol bialgebra by $ (A,\rhd,\lhd,
[-,-],\delta_{\rhd},\delta_{\lhd}, \Delta)$.
\mlabel{d:ppplbialg}
    \end{defi}

\begin{rmk}\begin{enumerate}
\item Recall \mcite{CP} that a \textbf{Lie bialgebra}    is a
triple $(\mathfrak{g},[-,-], \Delta)$,  where
$(\mathfrak{g},[-,-])$  is a Lie algebra, $(\mathfrak{g}, \Delta)$
is a Lie coalgebra and Eq.~(\mref{Liebc}) holds. Thus for a
\mysymbol bialgebra  $ (A,\rhd,\lhd,
[-,-],\delta_{\rhd},\delta_{\lhd}, \Delta)$, $(A,[-,-], \Delta)$
is a Lie bialgebra.

\item   Recall \mcite{NB} that an {\bf L-dendriform bialgebra} $(A,\!
\rhd,\! \lhd,\! \delta_{\rhd},\! \delta_{\lhd})$ consists of an
L-dendriform algebra $(A,\rhd,\lhd)$ and an L-dendriform coalgebra
$(A,\delta_{\rhd},\delta_{\lhd})$ satisfying certain compatibility
conditions. Then for a  \mysymbol bialgebra $ (A,\rhd,\lhd,
[-,-],\delta_{\rhd},\delta_{\lhd}, \Delta)$, if $[-,-]=0 =
\Delta$, then $(A,\rhd,\lhd,\delta_{\rhd}, \delta_{\lhd})$ is an
L-dendriform bialgebra.
\end{enumerate}
\end{rmk}

\begin{pro} \mlabel{mpsplb}
 Let $ (A,  \rhd_A,   \lhd_A, [-,-]_A) $   and $ (A^*,  \rhd_{A^*},   \lhd_{A^*},[-,-]_{A^*}) $   be \mysymbol algebras.  Let linear maps $\delta_{\rhd_A}, \delta_{\lhd_A},\Delta_A: A \rightarrow A \otimes A$ be the linear duals of  $\rhd_{A^{*}}$, $\lhd_{A^{*}}$ and $[-,-]_{A^{*}}$ respectively. Then $\left(A, A^{*}, L_{\rhd_{A}}^{*}-R_{\lhd_{A}}^{*},-R_{\lhd_{A}}^{*},\mathrm{ad}_{A}^{*}, L_{\rhd_{A^{*}}}^{*}-R_{\lhd_{A^{*}}}^{*}, -R_{\lhd_{A^{*}}}^{*},\mathrm{ad}_{A^{*}}^{*}\right)$  is a matched pair of post-Lie algebras if and only if  $(A, \rhd_A, \lhd_A, [-,-]_A,\delta_{\rhd_A},\delta_{\lhd_A},\Delta_A)$ is a  \mysymbol bialgebra.
\end{pro}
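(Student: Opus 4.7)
The plan is to show that the matched pair axioms of Proposition~\mref{mppstl}, specialized to the representations
$(A^{*}; L_{\rhd_{A}}^{*}-R_{\lhd_{A}}^{*},-R_{\lhd_{A}}^{*},\mathrm{ad}_{A}^{*})$ of $(A,\circ_{A},[-,-]_{A})$ and
$(A; L_{\rhd_{A^{*}}}^{*}-R_{\lhd_{A^{*}}}^{*},-R_{\lhd_{A^{*}}}^{*},\mathrm{ad}_{A^{*}}^{*})$ of $(A^{*},\circ_{A^{*}},[-,-]_{A^{*}})$, translate under linear duality into the eight bialgebra compatibility conditions \eqref{Liebc}--\eqref{DPSPLB8}. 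First I would observe that by Proposition~\mref{dpsplrep}, the hypothesis that $(A,\rhd_{A},\lhd_{A},[-,-]_{A})$ and $(A^{*},\rhd_{A^{*}},\lhd_{A^{*}},[-,-]_{A^{*}})$ are \mysymbol algebras already guarantees, for free, that the two quadruples above are representations of the respective post-Lie algebras. Therefore the matched pair conditions reduce exactly to the cross-compatibility equations \eqref{mplo}--\eqref{mppl6}, and these are precisely what must be matched to the bialgebra axioms.

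Next I would handle the equations in natural groupings. The condition \eqref{mpl1} (equivalently \eqref{mplo}) for the adjoint representations $\mathrm{ad}_{A}^{*},\mathrm{ad}_{A^{*}}^{*}$ dualizes to \eqref{Liebc}, i.e.\ to $(A,[-,-]_{A},\Delta_{A})$ being a Lie bialgebra; this is the classical Drinfeld correspondence. The remaining pair of ``$\rho$-and-$\circ$'' compatibilities \eqref{mppl1} and \eqref{mppl3}, expressing how $\mathrm{ad}_{A^{*}}^{*}$ interacts with $\circ_{A}$ and how $\mathrm{ad}_{A}^{*}$ interacts with $\circ_{A^{*}}$, dualize (using $\circ=\rhd+\lhd$ on $A$ and its image $\delta_{\circ}=\delta_{\rhd}+\delta_{\lhd}$ on $A$) to the two $\Delta$-$\circ$ type identities \eqref{DPSPLB1} and \eqref{DPSPLB2}, once I rewrite $x\circ y$ as $x\bullet y+y\lhd x-x\lhd y$ together with \eqref{vpla} to recover the $\bullet$-version. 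The ``$l$-and-$[-,-]$'' identities \eqref{mppl4} and \eqref{mppl2} will dualize to \eqref{DPSPLB3} and \eqref{DPSPLB4}, because $l_{A}-r_{A}=L_{\rhd_{A}}^{*}$ acts on $A^{*}$ and its transpose contributes the $L_{\rhd}\otimes\mathrm{id}+\mathrm{id}\otimes L_{\bullet}$ pieces, while the $-R_{\lhd}^{*}$ parts produce the $R_{\lhd}$ and $L_{\lhd}$ summands. The ``$l$-and-$\circ$'' identities \eqref{mppl7} and \eqref{mppl5} similarly become the mixed compatibilities \eqref{DPSPLB5} and \eqref{DPSPLB6}, and finally \eqref{mppl8} and \eqref{mppl6} -- which encode how $r_{A}=-R_{\lhd_{A}}^{*}$ and $r_{B}=-R_{\lhd_{A^{*}}}^{*}$ interact with the sub-adjacent brackets $\{-,-\}_{B}$ and $\{-,-\}_{A}$ -- dualize to \eqref{DPSPLB7} and \eqref{DPSPLB8}, since the sub-adjacent bracket $\{-,-\}$ expands via \eqref{eq:curbra}.

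The translation from each tensor identity on $A$ to the corresponding scalar identity is uniform: I pair both sides with arbitrary $a^{*}\otimes b^{*}\in A^{*}\otimes A^{*}$ and use \eqref{dualmap} together with \eqref{ldm} to rewrite $\Delta_{A}$, $\delta_{\rhd_{A}}$, $\delta_{\lhd_{A}}$ as the duals of $[-,-]_{A^{*}}$, $\rhd_{A^{*}}$, $\lhd_{A^{*}}$. All contributions then become expressions in $L_{\rhd},L_{\lhd},R_{\rhd},R_{\lhd},\mathrm{ad}$ on $A$ and on $A^{*}$, at which point the identity on $A$ becomes term-by-term identical to the corresponding matched pair identity.

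The main obstacle will be bookkeeping rather than ideas: each of \eqref{DPSPLB1}--\eqref{DPSPLB8} decomposes, after substituting $\circ=\rhd+\lhd$ and $\bullet=\rhd-\lhd^{\text{op}}$, into several summands, and one must check that the pieces produced by the four ``operators'' $l_{A},r_{A},\rho_{A}$ (and their $A^{*}$-analogues) on dualization assemble exactly into these summands without leftovers. A careful organization by the type of operator on the left ($l$, $r$, or $\rho$) and the operation on the right ($\circ$, $\bullet$, $[-,-]$, $\{-,-\}$) aligns the ten matched pair equations with the eight bialgebra equations (two of them absorbing the $\bullet$- and $\circ$-versions of the same dualized identity). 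Once this dictionary is complete, both implications of the proposition follow simultaneously, proving the stated equivalence.
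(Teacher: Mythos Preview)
Your approach is essentially the paper's: reduce to the matched pair axioms \eqref{mplo}--\eqref{mppl6} (the representation conditions being handled by Proposition~\mref{dpsplrep} and its coalgebra dual, Proposition~\mref{dpsplco}), then dualize each one by pairing with $a^{*}\otimes b^{*}$ to obtain \eqref{Liebc}--\eqref{DPSPLB8}.

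One correction to your proposed dictionary, however. You group \eqref{mppl1}, \eqref{mppl3} with \eqref{DPSPLB1}, \eqref{DPSPLB2} and \eqref{mppl4}, \eqref{mppl2} with \eqref{DPSPLB3}, \eqref{DPSPLB4}, but the actual pairing swaps \eqref{mppl3} and \eqref{mppl4}. Concretely, $\rho_{A}(x)(a^{*}\circ_{A^{*}}b^{*})=\mathrm{ad}_{A}^{*}(x)(a^{*}\circ_{A^{*}}b^{*})$, paired with $z\in A$, produces $-\langle a^{*}\otimes b^{*},\delta_{\circ}([x,z])\rangle$, so \eqref{mppl3} dualizes to \eqref{DPSPLB4}; and $l_{A}(x)[a^{*},b^{*}]_{A^{*}}=(L_{\rhd_{A}}^{*}-R_{\lhd_{A}}^{*})(x)[a^{*},b^{*}]_{A^{*}}$, paired with $z$, produces $-\langle a^{*}\otimes b^{*},\Delta_{A}(x\bullet_{A}z)\rangle$, so \eqref{mppl4} dualizes to \eqref{DPSPLB2}. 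The correct correspondence is thus
\[
\eqref{mppl1}\!\leftrightarrow\!\eqref{DPSPLB1},\ \eqref{mppl4}\!\leftrightarrow\!\eqref{DPSPLB2},\ \eqref{mppl2}\!\leftrightarrow\!\eqref{DPSPLB3},\ \eqref{mppl3}\!\leftrightarrow\!\eqref{DPSPLB4},\ \eqref{mppl5}\!\leftrightarrow\!\eqref{DPSPLB5},\ \eqref{mppl7}\!\leftrightarrow\!\eqref{DPSPLB6},\ \eqref{mppl6}\!\leftrightarrow\!\eqref{DPSPLB7},\ \eqref{mppl8}\!\leftrightarrow\!\eqref{DPSPLB8},
\]
with \eqref{mplo} and \eqref{mpl1} both equivalent to \eqref{Liebc}. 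This is precisely the bookkeeping you anticipated; with the corrected dictionary in hand, your plan goes through term by term.
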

\begin{proof}
By Proposition \mref{dpsplrep}, $(A^{*};L_{\rhd_{A}}^{*}-
R_{\lhd_{A}}^{*}, R_{\lhd_{A}}^{*},\mathrm{ad}_{A}^{*})$ is a
representation of   $(A, \circ_{A},[-,-]_{A})$ if and only if $(A,
\rhd_{A},\lhd_{A},[-,-]_{A})$ is a  \mysymbol algebra. In addition,
by Proposition \mref{dpsplco},  $( A;L_{\rhd_{A^{*}}}^{*}-
R_{\lhd_{A^{*}}}^{*},
R_{\lhd_{A^{*}}}^{*},\mathrm{ad}_{A^{*}}^{*})$  is a
representation of  $(A^*, \circ_{A^{*}},[-,-]_{A^{*}})$ if and
only if $(A,\delta_{\rhd_A}, \delta_{\lhd_A},\Delta_A)$ is a
\mysymbol coalgebra. Moreover, let $ (A,  \circ_A, [-,-]_A) $ and
$ (A,  \bullet_A, [-,-]_A) $ be the horizontal and vertical
post-Lie algebras of $ (A,  \rhd_A, \lhd_A, [-,-]_A) $
respectively. Then for
 $x, y \in A$, $a^{*}, b^{*} \in A^{*}$, we have
\begin{align*}
 \left\langle \mathrm{ad}_{A^*}^{*}(a^*)(x \circ_A y), b^*\right\rangle & =\left\langle a^* \otimes b^*, -\Delta_A(x \circ_A y  ) \right\rangle ;\\
 \left\langle x \circ_A \mathrm{ad}_{A^*}^{*}(a^*)(y), b^*\right\rangle & =\left\langle a^* \otimes b^*, -\big(  \mathrm{id} \otimes L_{\circ_A} (x)\big )\Delta_A(y) \right\rangle ;\\
\left\langle -[y,R^*_{\lhd_{A^*}}(a^*)x]_A, b^*\right\rangle & =\left\langle a^* \otimes b^*, -(  \mathrm{id} \otimes \mathrm{ad}_{A}(y) )\delta_{\lhd_{A}}(x)\right\rangle;\\
\left\langle -\mathrm{ad}_{A^*}^{*}\big((L^*_{\rhd_{A}}-R^*_{\lhd_{A}})(x)a^* \big)(y), b^*\right\rangle & =\left\langle a^* \otimes b^*, -\big( L_{\bullet_A}  (x)\otimes \mathrm{id}\big) \Delta_A(y)\right\rangle;\\
\left\langle
R_{\lhd_{A^*}}^{*}\big(\mathrm{ad}^*_{A}(y)a^*\big)(x),
b^*\right\rangle & =\left\langle a^* \otimes b^*,  -(
\mathrm{ad}_{A}(y) \otimes\mathrm{id}   )
\delta_{\lhd_{A}}(x)\right\rangle.
\end{align*}
In Eqs.~(\mref{mplo})--(\mref{mppl6}), take
\vsb
 \begin{align*}
 l_A =  L_{\rhd_{A}}^{*}-R_{\lhd_{A}}^{*},\; r_A = -R_{\lhd_{A}}^{*},\;
 \rho_{A}  = \mathrm{ad}_{A}^{*}, \; l_B =
 L_{\rhd_{A^{*}}}^{*}-R_{\lhd_{A^{*}}}^{*},  \;   r_B =
 -R_{\lhd_{A^{*}}}^{*},  \;  \rho_{B} = \mathrm{ad}_{A^{*}}^{*}.
 \end{align*}
Then Eq.~(\mref{mppl1}) holds if and only if  Eq.~(\mref{DPSPLB1}) holds. Similarly, we have
\vsb
\begin{align*}
\text{Eq.~}(\mref{mplo}) &\Longleftrightarrow \text{Eq.~}(\mref{mpl1}) \Longleftrightarrow \text{Eq.~}(\mref{Liebc}), \hspace{2.4cm} \text{Eq.~}(\mref{mppl4})  \Longleftrightarrow \text{Eq.~}(\mref{DPSPLB2}),\\
  \text{Eq.~}(\mref{mppl2})  &\Longleftrightarrow  \text{Eq.~}(\mref{DPSPLB3}),\quad \text{Eq.~}(\mref{mppl3})\Longleftrightarrow \text{Eq.~}(\mref{DPSPLB4}) ,\quad \text{Eq.~}(\mref{mppl5}) \Longleftrightarrow \text{Eq.~}(\mref{DPSPLB5}),\\
 \text{Eq.~}(\mref{mppl7})  &\Longleftrightarrow \text{Eq.~}(\mref{DPSPLB6}),\quad \text{Eq.~}(\mref{mppl6})  \Longleftrightarrow \text{Eq.~}(\mref{DPSPLB7}),\quad \text{Eq.~}(\mref{mppl8})  \Longleftrightarrow \text{Eq.~}(\mref{DPSPLB8}).
\end{align*}
This completes the proof.
\end{proof}
\vsb
Combining Propositions \mref{mtsplb} and \mref{mpsplb}, we obtain the following conclusion.

\begin{thm}\mlabel{djsplb}
Let $(A,\rhd_A,\lhd_{A}, [-,-]_A)$  and
$(A^{*},\rhd_{A^{*}},\lhd_{A^{*}}, [-,-]_{A^{*}})$ be
\mysymbol algebras, and let their associated horizontal post-Lie
algebras be $(A,\circ_A$, $[-,-]_A)$ and $(A^{*}$,
$\circ_{A^{*}}$, $[-,-]_{A^{*}})$ respectively.
 Let linear maps $ \delta_{\rhd_A}, \delta_{\lhd_A},\Delta_A: A \rightarrow A \otimes A$ be the linear duals of  $\rhd_{A^{*}}$,  $\lhd_{A^{*}}$ and $[-,-]_{A^{*}}$ respectively.  Then the following conditions are
 equivalent.
\begin{enumerate}
\item There is a Manin triple  $\big((A \oplus   A^{*},\!\circ_{A
\oplus A^{*}}, \![-,-]_{A \oplus   A^{*}},\mathcal{B}_{d}),
 (A,\circ_A$, $[-,-]_A)$, $(A^{*}$, $\circ_{A^{*}}$, $[-,-]_{A^{*}})\big)$
 of post-Lie algebras associated to  the invariant
bilinear form $\mathcal{B}_{d}$ such that the compatible \mysymbol algebra structure
$(A\oplus A^*\!,\!\rhd_{A\oplus A^*},\lhd_{A\oplus A^*},[-,-]_{A\oplus
A^*})$ with $\rhd_{A\oplus A^*}$, $\lhd_{A\oplus A^*}$ respectively
defined by Eqs.~\eqref{comDpspl1} and \eqref{comDpspl2} through
$\mathcal B_d$ contains $(A,\rhd_A$, $\lhd_{A}$, $[-,-]_A)$  and
$(A^{*},\rhd_{A^{*}},\lhd_{A^{*}}, [-,-]_{A^{*}})$  as \mysymbol
subalgebras.

\item $\left(A, A^{*},
L_{\rhd_{A}}^{*}\!-\!R_{\lhd_{A}}^{*},\!-R_{\lhd_{A}}^{*},\mathrm{ad}_{A}^{*},
L_{\rhd_{A^{*}}}^{*}\!-\!R_{\lhd_{A^{*}}}^{*},
\!-R_{\lhd_{A^{*}}}^{*},\mathrm{ad}_{A^{*}}^{*}\right)$  is a
matched pair of post-Lie algebras;
    \item $(A, \rhd_A, \lhd_A, [-,-]_A,\delta_{\rhd_A},\delta_{\lhd_A},\Delta_A)$ is a  \mysymbol bialgebra.
\end{enumerate}
\end{thm}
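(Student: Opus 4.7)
The plan is to simply chain Propositions~\mref{mtsplb} and~\mref{mpsplb} together, since together they already package the two biconditionals required for the three-way equivalence. Both propositions take as input the same data, namely a pair of \mysymbol algebras $(A,\rhd_A,\lhd_A,[-,-]_A)$ and $(A^*,\rhd_{A^*},\lhd_{A^*},[-,-]_{A^*})$, and both pivot on the same matched pair octuple
\[
\left(A, A^{*}, L_{\rhd_{A}}^{*}-R_{\lhd_{A}}^{*},-R_{\lhd_{A}}^{*},\mathrm{ad}_{A}^{*}, L_{\rhd_{A^{*}}}^{*}-R_{\lhd_{A^{*}}}^{*}, -R_{\lhd_{A^{*}}}^{*},\mathrm{ad}_{A^{*}}^{*}\right).
\]
The strategy is therefore to read off each equivalence from the appropriate proposition and concatenate them.

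First I would invoke Proposition~\mref{mtsplb} to obtain (1)$\Longleftrightarrow$(2): the Manin triple of post-Lie algebras associated to $\mathcal{B}_d$, such that the compatible \mysymbol structure on $A\oplus A^*$ produced via Eqs.~\eqref{comDpspl1}--\eqref{comDpspl2} restricts to the given \mysymbol algebras on $A$ and $A^*$, exists precisely when the above octuple is a matched pair of post-Lie algebras (with the associated horizontal post-Lie algebras on $A$ and $A^*$ playing the role of $(A,\circ_A,[-,-]_A)$ and $(A^*,\circ_{A^*},[-,-]_{A^*})$).

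Next I would invoke Proposition~\mref{mpsplb} to obtain (2)$\Longleftrightarrow$(3): with $\delta_{\rhd_A},\delta_{\lhd_A},\Delta_A$ defined as the linear duals of $\rhd_{A^{*}},\lhd_{A^{*}},[-,-]_{A^{*}}$ respectively, the matched pair condition in (2) is equivalent to $(A,\rhd_A,\lhd_A,[-,-]_A,\delta_{\rhd_A},\delta_{\lhd_A},\Delta_A)$ being a \mysymbol bialgebra. Concatenating these two biconditionals yields (1)$\Longleftrightarrow$(2)$\Longleftrightarrow$(3), completing the proof.

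Since all the substantive computation (unfolding the Manin triple data into matched pair axioms, and unfolding the matched pair axioms into the bialgebra compatibility equations \eqref{Liebc}--\eqref{DPSPLB8}) is already carried out inside the two cited propositions, there is no genuine obstacle here; the only thing to verify is that the input data for the two propositions are consistently indexed, and they are, by construction. Thus the theorem is essentially a bookkeeping assembly of the previous two results, parallel to the classical statement identifying Lie bialgebras with Manin triples of Lie algebras and with matched pairs on $A\oplus A^*$.
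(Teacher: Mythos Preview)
Your proposal is correct and matches the paper's own proof exactly: the paper simply states that the theorem follows by combining Propositions~\mref{mtsplb} and~\mref{mpsplb}, which is precisely the concatenation (1)$\Longleftrightarrow$(2)$\Longleftrightarrow$(3) you describe.
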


\subsection{A class of \mysymbol bialgebras and \pdsCYBE}\

Let $(A, \rhd, \lhd,[-,-])$  be a   \mysymbol algebra and  $r=\sum\limits_{i} a_i \otimes b_i \in A \otimes A$. Set
\vsc
\begin{align}
\mathbf{C}(r) &:= [r_{12}, r_{13}]+[r_{12}, r_{23}]+[r_{13}, r_{23}],\mlabel{CYBE} \\
\mathbf{D}(r) &:=   r_{13} \lhd r_{12} + r_{12}\bullet r_{23} +
r_{13} \circ r_{23},  \mlabel{LDCYBE1}
\end{align}
where  $\circ$ and $\bullet$ are defined by
Eqs.~\eqref{hpla} and \eqref{vpla}
respectively, and
\vsb
\begin{small}
\begin{align*}
\left[r_{12}, r_{13}\right]&:=\sum_{i, j}\left[a_{i}, a_{j}\right]
\!\otimes\! b_{i} \!\otimes\! b_{j},\; \left[r_{12}, r_{23}\right]:=\sum_{i, j} a_{i}
\!\otimes\! \left[b_{i}, a_{j}\right] \!\otimes\! b_{j},\; \left[r_{13}, r_{23}\right]:=\sum_{i, j} a_{i}
\!\otimes\! a_{j} \!\otimes\! \left[b_{i}, b_{j}\right],\\
r_{13} \lhd r_{12} &:=\sum_{i, j} a_{i} \lhd  a_{j}
\!\otimes\! b_{j} \!\otimes\! b_{i},\; r_{12} \bullet  r_{23} :
=\sum_{i, j}  a_{i} \!\otimes\! b_{i}\bullet a_{j} \!\otimes\!
b_{j},\; r_{13} \circ r_{23}:=\sum_{i, j} a_{i} \!\otimes\!
a_{j}\!\otimes\!  b_{i} \circ b_{j}.
\end{align*}
\end{small}
\vse 
\begin{defi}Let $(A, \rhd, \lhd,[-,-])$  be a   \mysymbol algebra and  $r\in A \otimes A$.   $r$ is called a solution of the {\bf \pdsCYBE (\PCYBE)}
    in $(A, \rhd, \lhd,[-,-])$ if $r$ satisfies $\mathbf{C}(r)=
    \mathbf{D}(r)=0$.
\end{defi}
\vsd
\begin{rmk}
    Note that $\mathbf{C}(r)=0$ is precisely the {\bf classical
        Yang-Baxter equation (CYBE)} in the Lie algebra $(A,[-,-])$.
\end{rmk}
\vsb
For convenience, we define a binary operation $\diamond$ in  $(A, \rhd, \lhd,[-,-])$  by
\vsb
\begin{equation}
x \diamond y := x \lhd y + x \rhd y -  y \lhd x - y \rhd x, \quad \forall x,y\in A. \label{new:symb}
\end{equation}
\begin{pro}\mlabel{cldlbiaco}
Let $(A, \rhd, \lhd,[-,-])$  be a  \mysymbol algebra  and $r=\sum\limits_{i} a_i \otimes b_i \in A \otimes A$. Define binary operations $\diamond$, $\circ$ and $\bullet$   by
Eqs.~\eqref{new:symb}, \eqref{hpla} and \eqref{vpla}
respectively and linear maps $ \delta_{\rhd,r},
\delta_{\lhd,r},\Delta_r: A \rightarrow A \otimes A$  by
\vsb
\begin{align}
{\delta_{\rhd,r}}(x) := E(x)r ,\quad  {\delta_{\lhd,r}}(x):=
F(x)(-r),\quad  \Delta_r(x) := G(x)r, \quad \forall x \in
A,\mlabel{EFG}
\end{align}
where the linear maps  $E,F, G: A \rightarrow
\operatorname{End}_{\mathbb F}(A \otimes A)$ are defined
respectively by
\vsb
\begin{align}
E(x)&:= L_{\rhd}(x) \otimes \mathrm{id} + \mathrm{id} \otimes  L_{\diamond}(x), \mlabel{eq:e}\\
F(x)&:= L_{\circ}(x) \otimes \mathrm{id} + \mathrm{id} \otimes L_{\bullet}(x) ,\mlabel{eq:f}\\
G(x)&:= \mathrm{ad}(x) \otimes \mathrm{id} + \mathrm{id} \otimes \mathrm{ad}(x) , \quad \forall x \in A . \mlabel{eq:g}
\end{align}
\begin{enumerate}
    \item \mlabel{it:q1} $(A,\Delta_r)$ is a Lie coalgebra if and only if the following equations hold.
    \vsb
    \begin{align}
&G(x)(r+\tau(r))=0, \mlabel{qclb1}\\
&(\operatorname{ad}(x) \otimes \mathrm{id} \otimes
\mathrm{id}+\mathrm{id} \otimes \operatorname{ad}(x) \otimes
\mathrm{id} +\mathrm{id} \otimes \mathrm{id} \otimes
\operatorname{ad}(x))(\mathbf{C}(r))=0,\;\;\forall x\in
A.\mlabel{qclb2}
    \end{align}
    \item \mlabel{it:q2} Eq.~\eqref{LDLC1} holds if and only if the following equation holds.
    \vsb
\begin{equation}
   \begin{split}
&(L_{\circ}(x) \otimes \mathrm{id} \otimes \mathrm{id}+\mathrm{id} \otimes L_{\circ}(x) \otimes  \mathrm{id}+ \mathrm{id} \otimes \mathrm{id} \otimes L_{\bullet}(x))\left(\mathbf{C}(r)\right)\\
&+\sum_i \big((\operatorname{ad}(a_i) \otimes \mathrm{id})\tau
F(x)(r + \tau(r))\big) \otimes b_i = 0,\;\;\forall x\in A.
\end{split}
\mlabel{qcldl1}
\end{equation}
        \item  Eq.~\eqref{LDLC2} holds if and only if the following
        equations hold.
        \vsb
\begin{eqnarray}
&& (\mathrm{ad}(x) \otimes \mathrm{id}\otimes \mathrm{id}) (\mathrm{id} \otimes \mathrm{id}\otimes \mathrm{id} + \sigma_{23})\Big(\sum_{i}\big((a_i \otimes F(b_i)(r + \tau(r))\big) - \mathbf{D}(r)\Big) \nonumber \\
&&  +\sum_{i}a_i \otimes F([x,b_i])(r + \tau(r)) = 0,\mlabel{qcldl2}\\
&&(  \mathrm{id}\otimes \mathrm{id}\otimes (L_{\lhd}+R_{\lhd})(x)
)(\mathbf{C}(r)) = 0,\;\;\forall x\in A. \mlabel{qcldl3}
   \end{eqnarray}
    \item  Eq.~\eqref{LDLC3} holds if and only if the following equation
    holds.
    \vsb
   \begin{equation}
\mlabel{qcldl4}
    \begin{split}
&(L_{\lhd}(x) \otimes  \mathrm{id} \otimes \mathrm{id})(\mathbf{C}(r))
 + ( \mathrm{id} \otimes \operatorname{ad}(x) \otimes \mathrm{id}  )\Big(  \sigma_{23}(\mathbf{D}(r)) -\sum_{i}a_i \otimes F(b_i)(r + \tau(r)) \Big) \\
&- ( \mathrm{id} \otimes \mathrm{id} \otimes \operatorname{ad}(x)
)(\mathbf{D}(r)) -\sum_i  \big((R_{\lhd}(a_i) \otimes
\mathrm{id})G(x)(r + \tau(r))\big) \otimes b_i = 0,\;\;\forall x\in A.
    \end{split}
    \end{equation}
    \item  Eq.~\eqref{LDLC4} holds if and only if the following equation
    holds.
    \vsb
    \begin{equation}
  \begin{split}
&((\ad+L_{\lhd})(x) \otimes  \mathrm{id} \otimes \mathrm{id} +  \mathrm{id}  \otimes L_{\circ}(x) \otimes \mathrm{id} )\Big( \sum_{i}\big(a_i \otimes F(b_i)(r + \tau(r)) \big) -\sigma_{23}(\mathbf{D}(r)) \Big)\\
&+ (  \mathrm{id}  \otimes \mathrm{id} \otimes L_{\bullet}(x) )\Big( \sum_{i}\big(a_i \otimes F(b_i)(r + \tau(r)) -\tau F(a_i)(r + \tau(r)) \otimes b_i\big) -\sigma_{23}(\mathbf{D}(r)) \Big)\\
&+\!\sum_{i}\Big(\big((R_{\lhd}(a_i) \!\otimes \!\mathrm{id}) \tau F(x)(r +
\tau(r))\big)\!\otimes\! b_i -  \tau F(x \circ a_i)(r + \tau(r))  \!\otimes\!
b_i\Big)= 0,\; \forall x\in A.
\end{split}
\mlabel{qcldl5}
\end{equation}
    \item  Eq.~\eqref{LDLC5} holds if and only if the following equation holds.
    \vsb
{\small
         \begin{equation}
\begin{split}
&(\mathrm{id}  \otimes \mathrm{id} \otimes \mathrm{id} - \tau \otimes \mathrm{id})(\mathrm{ad}(x)\otimes \mathrm{id}  \otimes \mathrm{id})\Big(\sum_{i}\big(a_i \otimes F(b_i)(r + \tau(r))\big) - \sigma_{23}(\mathbf{D}(r))\Big)\\
&+\sum_{i}\Big( \big(\mathrm{id} \otimes R_{\rhd}(a_i) \big) E(x)(r + \tau(r)) \Big)\otimes b_i+\Big( \big(\mathrm{id} \otimes R_{\circ}(a_i) \big) G(x)(r + \tau(r))\Big) \otimes b_i\\
&+(\mathrm{id}  \otimes \mathrm{id} \otimes \mathrm{id} - \tau \otimes \mathrm{id}) \big(\mathrm{id}  \otimes \mathrm{id}  \otimes L_{\diamond}(x) \big) \big(\mathbf{D}(r)\big)- (\mathrm{id}  \otimes \mathrm{id} \otimes R_{\bullet}(x) )(\mathbf{C}(r))\\
&+(\mathrm{id}  \otimes \mathrm{id} \otimes \mathrm{id} - \tau
\otimes \mathrm{id}) (L_{\rhd}(x)\otimes \mathrm{id}  \otimes
\mathrm{id} )  \big(\mathbf{D}(r)-( \tau  \otimes
\mathrm{id}) (\mathbf{D}(r))  \big) =0,\;\;\forall x\in
A,
\end{split}
\mlabel{qcldl6}
    \end{equation}
}
    \end{enumerate}
where $\sigma_{23}(x \otimes y
\otimes z) = x \otimes z \otimes y$ for all $x,y,z \in A$.
    \end{pro}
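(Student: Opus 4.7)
The proof amounts to six direct coboundary-type computations, one per item, carried out uniformly. For each item, my plan is to substitute the defining formulas \eqref{EFG} for $\delta_{\rhd,r}$, $\delta_{\lhd,r}$, $\Delta_r$ into the corresponding coalgebra identity (one of \eqref{LDLC1}--\eqref{LDLC5} or the Lie-coalgebra conditions), then apply the \mysymbol algebra axioms \eqref{DPSPL1}--\eqref{DPSPL5} to reorganize the resulting three-tensor sums, and finally recognize the canonical ingredients $\mathbf{C}(r)$, $\mathbf{D}(r)$, and the symmetric-part correction terms $F(x)(r+\tau(r))$ or $G(x)(r+\tau(r))$ among the collected summands.

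I would begin with item (\mref{it:q1}), which is essentially the classical Drinfeld computation adapted to our setting. Cocommutativity of $\Delta_r$ collapses to $G(x)(r+\tau(r)) = 0$ after pairing $G(x)r$ against $\tau G(x)r = G(x)\tau(r)$, yielding \eqref{qclb1}. The co-Jacobi identity, after applying the Jacobi identity of the sub-adjacent Lie algebra, produces $\mathbf{C}(r)$ cyclically symmetrized by $\mathrm{ad}(x)$, giving \eqref{qclb2}. Items (\mref{it:q2})--(6) then proceed by the same template: expand both sides via \eqref{EFG}, regroup using the axioms so that two factors of $r$ either bracket together (contributing to $\mathbf{C}(r)$) or multiply under one of $\lhd,\bullet,\circ$ (contributing to $\mathbf{D}(r)$), and collect the residues as correction terms involving $r+\tau(r)$. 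The axioms play dedicated roles in this reorganization: \eqref{DPSPL1}--\eqref{DPSPL2} produce the $[x,b_i]$-style exchanges, while \eqref{DPSPL3}--\eqref{DPSPL5} control the cross-compatibility between $\rhd,\lhd$ and the bracket, which is precisely what governs the $\mathbf{D}(r)$ contribution. The identity $L_\diamond(x) = L_\circ(x) - R_\circ(x)$ embedded in the definition of $E(x)$ is what permits the $\rhd$--$\lhd$ mixing in \eqref{LDLC4}--\eqref{LDLC5} to be absorbed into $\mathbf{D}(r)$.

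The principal obstacle is combinatorial bookkeeping rather than conceptual depth: each of \eqref{qcldl1}--\eqref{qcldl6} contains between four and eight summands with subtle placements of $\tau$ in the second and third tensor factors and of $\sigma_{23}$ permutations. The cleanest organization is to first isolate all contributions that factor through a Lie bracket of two entries of $r$ (these assemble into $\mathbf{C}(r)$ acted on by a suitable operator), then isolate those that factor through an asymmetric product of two entries of $r$ (these assemble into $\mathbf{D}(r)$, possibly twisted by $\sigma_{23}$), and finally verify that the leftover single-$r$ terms regroup as $E(x)$-, $F(x)$-, or $G(x)$-operator expressions applied to $r+\tau(r)$. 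Given the length and parallel structure of the six computations, I would present one case (say item (\mref{it:q2}), where the interplay of $L_\circ, L_\bullet$, and $\mathrm{ad}$ is already representative) in full detail and then indicate that the remaining five proceed by the identical scheme, flagging only the nontrivial sign or permutation subtleties.
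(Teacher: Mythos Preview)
Your proposal is correct and matches the paper's approach essentially line for line: the paper also proves only item~(\mref{it:q2}) in detail (expanding $(\mathrm{id}\otimes\Delta_r)\delta_{\lhd,r}(x) - (\Delta_r\otimes\mathrm{id})\delta_{\lhd,r}(x) - (\tau\otimes\mathrm{id})(\mathrm{id}\otimes\Delta_r)\delta_{\lhd,r}(x)$, splitting the result into four blocks, and regrouping each into an operator acting on $\mathbf{C}(r)$ plus a correction through $F(x)(r+\tau(r))$), and then declares the remaining items ``similar.'' Your remark that the \mysymbol axioms---in particular \eqref{DPSPL2} via the relation $L_\circ(x)-L_\bullet(x)=L_\lhd(x)+R_\lhd(x)$---are what allow terms like $[x\circ a_j,b_i]-[x\bullet b_i,a_j]$ to collapse to $x\bullet[a_j,b_i]$ is exactly the hidden step in the paper's passage from the first to the second line of its $B_3$ computation.
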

    \begin{proof}
We only prove (\mref{it:q2}). The others are obtained similarly.  Let $x \in A$. Then we have
      \begin{small}
\begin{eqnarray*}
&&(\mathrm{id} \otimes \Delta_r) \delta_{\lhd,r}(x) -(\Delta_{r} \otimes  \mathrm{id}) \delta_{\lhd,r}(x) -  (\tau \otimes \mathrm{id}) (\mathrm{id} \otimes   \Delta_r)\delta_{\lhd,r}(x)\\
&=&\sum_{i,j} - x \circ a_i \otimes [b_i, a_j] \otimes b_j -  x \circ a_i \otimes a_j \otimes [b_i, b_j] - a_i \otimes [x \bullet b_i, a_j] \otimes b_j - a_i \otimes a_j \otimes  [x \bullet b_i, b_j]\\
& & +  a_j \otimes  [x \circ  a_i,b_j] \otimes  b_i    +   [x \circ  a_i,a_j] \otimes b_j  \otimes  b_i  +   a_j  \otimes  [a_i,b_j]  \otimes x \bullet b_i + [a_i,a_j]   \otimes  b_j     \otimes x \bullet  b_i\\
& & +  [b_i, a_j]  \otimes x \circ a_i\otimes b_j + a_j \otimes   x \circ a_i  \otimes [b_i, b_j] + [x \bullet b_i, a_j] \otimes a_i  \otimes b_j +  a_j \otimes a_i \otimes  [x \bullet b_i, b_j]\\
&=& B_1+B_2+B_3+B_4,
\end{eqnarray*}
  \end{small}
    where
    \begin{small}
    \begin{align*}
B_1 &= \sum_{i,j}-x \circ a_i \otimes [b_i, a_j] \otimes b_j -  x \circ a_i \otimes a_j \otimes [b_i, b_j]   +   [x \circ  a_i,a_j] \otimes b_j  \otimes  b_i  \\
&=\sum_{i,j}-(L_{\circ}(x) \otimes  \mathrm{id} \otimes  \mathrm{id})([r_{12},r_{23}] + [r_{13},r_{23}]) + x \circ [a_i,a_j]  \otimes  b_j  \otimes b_i -  [a_i ,x \circ a_j] \otimes  b_j  \otimes b_i\\
&=\sum_{i,j}-(L_{\circ}(x) \otimes  \mathrm{id} \otimes  \mathrm{id})([r_{12},r_{23}] + [r_{13},r_{23}] + [r_{12},r_{13}] ) -  [a_i ,x \circ a_j] \otimes  b_j  \otimes b_i\\
&=-(L_{\circ }(x) \otimes  \mathrm{id} \otimes  \mathrm{id})(\mathbf{C}(r) )- \sum_{i}\big((\operatorname{ad}(a_i)  \otimes \mathrm{id}) ( L_{\bullet}(x)\otimes \mathrm{id})r\big) \otimes b_i, \\
B_2 &=\sum_{i,j}  a_j  \otimes  [a_i,b_j]  \otimes x \bullet b_i-a_i \otimes a_j \otimes  [x \bullet b_i, b_j]  + [a_i,a_j]   \otimes  b_j     \otimes x \bullet  b_i+  a_j \otimes a_i \otimes  [x \bullet b_i, b_j]\\
&=-(\mathrm{id} \otimes \mathrm{id} \otimes  L_{\bullet}(x))(\mathbf{C}(r) ),\\
B_3 &=\sum_{i,j}  a_j \otimes  [x \circ  a_i,b_j] \otimes  b_i- a_i \otimes [x \bullet b_i, a_j] \otimes b_j  + [b_i, a_j]  \otimes x \circ a_i\otimes b_j + a_j \otimes   x \circ a_i  \otimes [b_i, b_j]\\
&=\sum_{i,j}  a_i \otimes  x \bullet [  a_j,b_i] \otimes  b_j + [b_i, a_j]  \otimes x \circ a_i\otimes b_j + a_j \otimes   x \circ a_i  \otimes [b_i, b_j]\\
&=\sum_{i}  -(\mathrm{id}\otimes  L_{\circ}(x)\otimes \mathrm{id} ) (\mathbf{C}(r) ) -  \big((\operatorname{ad}(a_i)  \otimes \mathrm{id}) ( \mathrm{id} \otimes  L_{\circ}(x))(r+\tau (r))\big) \otimes b_i,\\
B_4 &=\sum_{i,j} [x \bullet b_i, a_j] \otimes a_i  \otimes b_j=- \sum_{i} \big((\operatorname{ad}(a_i)  \otimes \mathrm{id}) ( L_{\bullet}(x)\otimes \mathrm{id})\tau (r)\big) \otimes b_i.
\vsc
\end{align*}
\end{small}
Thus we get
\vsb
\begin{align*}
B_1+B_2+B_3+B_4
&  =-(L_{\circ}(x) \otimes \mathrm{id} \otimes \mathrm{id}+\mathrm{id} \otimes L_{\circ}(x) \otimes  \mathrm{id}+ \mathrm{id} \otimes \mathrm{id} \otimes L_{\bullet}(x))\left(\mathbf{C}(r)\right)\nonumber\\
&\quad-\sum_i \big((\operatorname{ad}(a_i) \otimes \mathrm{id})\tau F(x)(r + \tau(r))\big) \otimes b_i.
    \end{align*}
Hence the conclusion follows.
    \end{proof}

\begin{pro}\mlabel{csplbiaxr}
Let $(A, \rhd, \lhd,[-,-])$  be a   \mysymbol algebra and $r \in A
\otimes A$. Define linear maps $\delta_{\rhd,r},
\delta_{\lhd,r}, \Delta_r : A \rightarrow A \otimes A$   by
Eq.~\eqref{EFG}.
\vsa
 \begin{enumerate}
    \item Eq.~\eqref{DPSPLB1} holds.
    \item \mlabel{it:w2} Eq.~\eqref{DPSPLB2} holds if and only if the following equation
    holds.
    \vsb
        \begin{align}
    (\mathrm{ad}(x) \otimes \mathrm{id})F(y)(r + \tau(r)) = 0, \quad \forall x,y\in A. \mlabel{cldl1}
    \end{align}
    \item Eq.~\eqref{DPSPLB3} holds if and only if the following equation
    holds.
   \vsb
        \begin{align}
\big(F([x,y])+  (\mathrm{ad}(x)  \otimes  \mathrm{id})F(y) -    ( \mathrm{ad}(y)\otimes \mathrm{id}) F(x) \big)(r + \tau(r)) = 0, \quad \forall x,y\in A. \mlabel{cldl2}
    \end{align}
    \item Eq.~\eqref{DPSPLB4}  holds if and only if  Eq.~\eqref{cldl1} holds.
    \item Eq.~\eqref{DPSPLB5} holds if and only if the following equation
    holds.
    \vsb
  \begin{equation}
\begin{split}
\big(F(x \circ y) + ( \mathrm{id} \otimes L_{\circ}(x) & + (\mathrm{ad}+L_{\rhd})   (x)\otimes \mathrm{id} )F(y)\\
&-(R_{\lhd}(y) \otimes \mathrm{id}) \tau  F(x)  \big)(r + \tau(r)) = 0,    \quad \forall x,y\in A,
\end{split}
\mlabel{cldl3}
\end{equation}
where  $\circ$ is defined by Eq.~\eqref{hpla}.
\item Eqs.~\eqref{DPSPLB6}--\eqref{DPSPLB7} hold.
    \item Eq.~\eqref{DPSPLB8} holds if and only if the following equation
    holds.
\begin{equation}
    \begin{split}
\big(E(x \lhd y)-F(x \lhd y) &+ ( \mathrm{id} \otimes \mathrm{id} -  \tau )( L_{\lhd}(x)\otimes \mathrm{id})E(y) + G(x) \\
&+( \mathrm{id}\otimes R_{\lhd}(y))  (F(x) -E(x) )\big)(r + \tau(r)) = 0,    \quad \forall x,y\in A.
\end{split}
\mlabel{cldl4}
\end{equation}
    \end{enumerate}
\end{pro}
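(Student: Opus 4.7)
The plan is to prove all seven items by direct substitution. For each compatibility condition \eqref{DPSPLB1}--\eqref{DPSPLB8} in Definition~\mref{d:ppplbialg}, one plugs in $\delta_{\rhd,r}(x)=E(x)r$, $\delta_{\lhd,r}(x)=-F(x)r$, $\Delta_r(x)=G(x)r$ from \eqref{EFG} and expands both sides as sums over the tensor components of $r=\sum_i a_i\otimes b_i$. The resulting tensorial identities are then simplified using the \mysymbol algebra axioms \eqref{DPSPL1}--\eqref{DPSPL5} together with the identities $L_\circ = L_\rhd + L_\lhd$, $L_\bullet - L_\circ = -(L_\lhd + R_\lhd)$, and $L_\diamond = L_\rhd + L_\lhd - R_\rhd - R_\lhd$ coming from \eqref{hpla}, \eqref{vpla}, \eqref{new:symb}, as well as the fact that the associated horizontal and vertical multiplications give post-Lie algebras (Corollary~\mref{co:aa} and Proposition~\mref{pspleuqvi}).

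For items (1) and (6) one verifies directly that the two sides of \eqref{DPSPLB1}, \eqref{DPSPLB6}, \eqref{DPSPLB7} coincide after expansion, with no extra condition on $r$. The key manoeuvre is to rewrite applications of $\ad(y)\otimes\id$ (respectively $\id\otimes\ad(y)$) to $-F(x)r$ as brackets inside the factors of $r$, and then match with the terms produced by $G(x\circ y)r$, $L_\circ(x)G(y)r$, and similar, by appealing to the Lie-equivariance axiom \eqref{pl1} and the mixed compatibility \eqref{DPSPL3}. For items (2)--(5) and (7) the same expansion leaves behind residues that do not vanish from the \mysymbol axioms alone. The crucial observation is that each residue, after regrouping, is exchanged up to sign when $r$ is replaced by $\tau(r)$ in one tensor slot, so it packages cleanly into an expression of the form $X(x,y)(r+\tau(r))$, with $X(x,y)$ a specific combination of $E,F,G$. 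This yields \eqref{cldl1} as the residue of both \eqref{DPSPLB2} and \eqref{DPSPLB4}, \eqref{cldl2} as that of \eqref{DPSPLB3}, \eqref{cldl3} as that of \eqref{DPSPLB5}, and \eqref{cldl4} as that of \eqref{DPSPLB8}. Conversely, each condition is evidently sufficient, since it eliminates exactly the obstruction just identified.

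The main obstacle is bookkeeping: each bialgebra equation produces on the order of twenty tensor terms, and the cancellations rely on several \mysymbol axioms at once. To organize the calculation I would split every equation into three pieces according to which of the three tensor slots of $r$ (or $\tau(r)$) is acted on, and handle the slots separately, following the template already used for \eqref{qcldl1} in the proof of Proposition~\mref{cldlbiaco}. A useful sanity check at each step is that only contributions that are symmetric under $r\leftrightarrow\tau(r)$ can escape cancellation, which matches the uniform appearance of $r+\tau(r)$ in \eqref{cldl1}--\eqref{cldl4}.
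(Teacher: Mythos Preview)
Your proposal is correct and follows essentially the same approach as the paper: the paper's own proof carries out the direct substitution and regrouping you describe in full detail for item~(\mref{it:w2}) only, arriving at the residue $-(\mathrm{id}\otimes\ad(y))\tau F(x)(r+\tau(r))$ (equivalent after applying $\tau$ and relabelling to Eq.~\eqref{cldl1}), and then declares the remaining items ``obtained similarly''. Your plan to organize the cancellations slot-by-slot and to exploit the symmetry of $r+\tau(r)$ is exactly what the paper's worked example does implicitly.
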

    \begin{proof}
We only prove (\mref{it:w2}). The others are obtained similarly. Let $x,y \in A$. Then we have
\vsb
\begin{alignat*}{4}
\Delta_r(x \bullet y) =\big((L_{\bullet}(x)\mathrm{ad}(y)- \mathrm{ad}(y)L_{\bullet}(x))\otimes   \mathrm{id}  
&+  \mathrm{id} \otimes (L_{\bullet}(x)\mathrm{ad}(y) - \mathrm{ad}(y)L_{\bullet}(x) )\big)r,\\
 \big( L_{\bullet}(x) \otimes \mathrm{id} + \mathrm{id} \otimes L_{\bullet}(x)\big)\Delta_r(y) &= (L_{\bullet}(x)\otimes   \mathrm{ad}(y)  +  L_{\bullet}(x)\mathrm{ad}(y) \otimes   \mathrm{id}  \\
 & \quad +  \mathrm{id} \otimes  L_{\bullet}(x)\mathrm{ad}(y) +\mathrm{ad}(y)  \otimes L_{\bullet}(x) ) r,  \\
    -(\mathrm{id} \otimes  \mathrm{ad}(y)) \tau\delta_{\lhd,r}(x) &=( \mathrm{id} \otimes \mathrm{ad}(y) L_{\circ}(x) +  L_{\bullet}(x) \otimes \mathrm{ad}(y)) \tau(r), \\
      (\mathrm{ad}(y)\otimes \mathrm{id}) \delta_{\lhd,r}(x)& = ( \mathrm{ad}(y) L_{\circ}(x) \otimes  \mathrm{id}  +   \mathrm{ad}(y) \otimes L_{\bullet}(x) ) (-r),
\end{alignat*}
where  $\circ$ and $\bullet$ are defined by Eqs.~\eqref{hpla} and \eqref{vpla} respectively. Thus
\vsb
\begin{eqnarray*}
 &&\Delta_r(x \bullet y) - \big( L_{\bullet}(x) \otimes \mathrm{id} + \mathrm{id} \otimes L_{\bullet}(x)\big)\Delta_r(y)+(\mathrm{id} \otimes  \mathrm{ad}(y)) \tau\delta_{\lhd,r}(x) -  (\mathrm{ad}(y)\otimes \mathrm{id}) \delta_{\lhd,r}(x)\\
&&= - (\mathrm{id} \otimes  \mathrm{ad}(y)L_{\bullet}(x)  ) r -(L_{\bullet}(x)\otimes   \mathrm{ad}(y))r - ( \mathrm{id} \otimes \mathrm{ad}(y) L_{\circ}(x) +  L_{\bullet}(x) \otimes \mathrm{ad}(y)) \tau(r)\\
&&= - (\mathrm{id} \otimes  \mathrm{ad}(y)L_{\circ}(x)  ) (r + \tau(r)) -(L_{\bullet}(x)\otimes   \mathrm{ad}(y))(r + \tau(r))\\
&&= -(\mathrm{id} \otimes \mathrm{ad}(y))\tau F(x)(r + \tau(r)).
\end{eqnarray*}
Hence the conclusion follows.
\vsc    \end{proof}

Combining Propositions \mref{cldlbiaco} and \mref{csplbiaxr} yields the following result.
\vsb
\begin{thm}\mlabel{quasildlb}
Let  $(A, \rhd, \lhd,[-,-])$  be a  \mysymbol algebra and $r\in
A\otimes A$. Define linear maps $\delta_{\rhd,r},
\delta_{\lhd,r}, \Delta_r, : A \rightarrow A \otimes A$   by
Eq.~\eqref{EFG}.
\vsb
    \begin{enumerate}
    \item    $(A, \delta_{\rhd,r}, \delta_{\lhd,r}, \Delta_r)$ is a  \mysymbol coalgebra if and only if  Eqs.~\eqref{qclb1}--\eqref{qcldl6} hold.
    \item \mlabel{it:e2}   $(A, \rhd, \lhd,[-,-],\delta_{\rhd,r}, \delta_{\lhd,r}, \Delta_r)$ is a  \mysymbol bialgebra if and only if  Eqs.~\eqref{qclb1}--\eqref{cldl4} hold.
    \end{enumerate}
\end{thm}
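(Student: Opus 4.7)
The plan is to assemble the theorem directly from Propositions~\mref{cldlbiaco} and~\mref{csplbiaxr}, which have already translated every structural axiom into a tensor equation on $r$. I would proceed in two stages corresponding to the two parts of the statement, doing essentially no new computation but carefully matching axioms with the items of these two propositions.

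For part~(1), I would unfold the definition of a \mysymbol coalgebra: $(A,\Delta_r)$ must be a Lie coalgebra, and the five identities \eqref{LDLC1}--\eqref{LDLC5} must hold for $\delta_{\rhd,r}$, $\delta_{\lhd,r}$, $\Delta_r$ (as defined by \eqref{EFG}). Proposition~\mref{cldlbiaco} matches these six requirements one-by-one with the six groups of tensor equations \eqref{qclb1}--\eqref{qcldl6}: items (a) and (b) translate the Lie coalgebra axioms, and items (c)--(f) translate \eqref{LDLC1}--\eqref{LDLC5} in order. Thus part~(1) is an immediate assembly once one lines up the items of that proposition with the clauses of Definition of a \mysymbol coalgebra.

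For part~(2), in addition to the coalgebra structure from part~(1), I need the eight compatibility conditions \eqref{DPSPLB1}--\eqref{DPSPLB8} linking the \mysymbol algebra multiplications with the three cobrackets defined by \eqref{EFG}. Here I invoke Proposition~\mref{csplbiaxr}, which shows that \eqref{DPSPLB1}, \eqref{DPSPLB6}, and \eqref{DPSPLB7} hold automatically for $\delta_{\rhd,r}$, $\delta_{\lhd,r}$, $\Delta_r$, while the remaining five compatibility identities reduce as follows: \eqref{DPSPLB2} and \eqref{DPSPLB4} each collapse to \eqref{cldl1}, \eqref{DPSPLB3} is equivalent to \eqref{cldl2}, \eqref{DPSPLB5} to \eqref{cldl3}, and \eqref{DPSPLB8} to \eqref{cldl4}. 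Combining with part~(1), the \mysymbol bialgebra property is equivalent to the conjunction of \eqref{qclb1}--\eqref{qcldl6} and \eqref{cldl1}--\eqref{cldl4}, which is the stated characterization.

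The only genuine subtlety I anticipate is bookkeeping: a \mysymbol bialgebra involves three distinct cobrackets governed by the operators $E$, $F$, $G$ of \eqref{eq:e}--\eqref{eq:g}, and the derived cobrackets $\delta_\circ,\delta_\bullet$ appear implicitly in \eqref{DPSPLB2}--\eqref{DPSPLB8}. So care is required to verify that the operator forms in Proposition~\mref{csplbiaxr} really do account for all instances in which $\delta_\circ,\delta_\bullet$ enter the compatibility axioms (including the automatic items~\eqref{DPSPLB1}, \eqref{DPSPLB6}, \eqref{DPSPLB7}). Once this alignment is checked, no new calculation is needed and the theorem is a direct synthesis of the two preceding propositions.
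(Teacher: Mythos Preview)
Your proposal is correct and mirrors the paper's own proof, which simply states that the theorem follows by combining Propositions~\ref{cldlbiaco} and~\ref{csplbiaxr}. One minor bookkeeping slip: in Proposition~\ref{cldlbiaco} it is item~(a) alone that handles the Lie coalgebra axioms and items~(b)--(f) that handle \eqref{LDLC1}--\eqref{LDLC5}, not (a)--(b) and (c)--(f) as you wrote; also, the Lie bialgebra compatibility~\eqref{Liebc} is a ninth condition beyond \eqref{DPSPLB1}--\eqref{DPSPLB8}, but it holds automatically for the coboundary $\Delta_r(x)=G(x)r$ and so is silently absorbed.
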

As a direct consequnce of Theorem~\mref{quasildlb}, we have
\begin{cor}\mlabel{iartoldlbia}
Let  $(A, \rhd, \lhd,[-,-])$  be a  \mysymbol algebra and $r\in
A\otimes A$. Define linear maps $\delta_{\rhd,r},
\delta_{\lhd,r}, \Delta_r, : A \rightarrow A \otimes A$   by
Eq.~\eqref{EFG}. If $r$ is a solution of the \PCYBE satisfying
\vsb
        \begin{align}
   E(x)(r+\tau(r))=0,\quad F(x)(r+\tau(r))=0,  \quad G(x)(r+\tau(r))=0, \quad \forall x \in A, \mlabel{invldl}
        \end{align}
for the maps $E, F, G$ defined in Eqs.~\meqref{eq:e}--\meqref{eq:g}, then $(A, \rhd, \lhd,[-,-],\delta_{\rhd,r}, \delta_{\lhd,r},
\Delta_r)$ is a  \mysymbol bialgebra. 
In particular, if $r$ is an antisymmetric solution of the \PCYBE,
then $(A, \rhd, \lhd,[-,-],\delta_{\rhd,r}$,
$\delta_{\lhd,r}$, $\Delta_r)$ is a  \mysymbol bialgebra. 
\end{cor}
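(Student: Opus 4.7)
The plan is to derive the corollary directly from Theorem~\mref{quasildlb}(\mref{it:e2}), which characterizes \mysymbol bialgebras of the form $(A, \rhd, \lhd, [-,-], \delta_{\rhd,r}, \delta_{\lhd,r}, \Delta_r)$ as exactly those $r \in A \otimes A$ for which the full list of identities \eqref{qclb1}--\eqref{cldl4} holds. The task therefore reduces to checking each of these identities under the hypotheses, namely $\mathbf{C}(r) = 0$, $\mathbf{D}(r) = 0$, together with $E(x)(r+\tau(r)) = F(x)(r+\tau(r)) = G(x)(r+\tau(r)) = 0$ for every $x \in A$.

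The key observation is structural rather than computational. Inspecting \eqref{qclb1}--\eqref{cldl4}, every summand on the right-hand side is obtained by post-composing linear tensor operators (built from $\mathrm{ad}$, the various $L_*$ and $R_*$ operators, the flip $\tau$, and the permutation $\sigma_{23}$) onto one of five basic quantities: $\mathbf{C}(r)$, $\mathbf{D}(r)$, $E(y)(r+\tau(r))$, $F(y)(r+\tau(r))$, or $G(y)(r+\tau(r))$ for some auxiliary $y$ depending on the free variable $x$. For instance, in \eqref{qcldl2} the tail term $\sum_i a_i \otimes F([x, b_i])(r+\tau(r))$ is of the $F$-type with $y = [x, b_i]$, while in \eqref{qcldl1} the term $\sum_i \big((\mathrm{ad}(a_i)\otimes\mathrm{id})\tau F(x)(r+\tau(r))\big) \otimes b_i$ vanishes because $F(x)(r+\tau(r)) = 0$ forces $\tau F(x)(r+\tau(r)) = 0$. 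Since our hypotheses force each of the five basic quantities to vanish, and since linearity preserves zero, every summand in \eqref{qclb1}--\eqref{cldl4} collapses to zero.

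The main effort is therefore pure bookkeeping: one runs down the list of identities and assigns each summand to one of the five types above. The main obstacle is the sheer number of terms rather than any genuine conceptual difficulty, since no further algebraic identity on $(A, \rhd, \lhd, [-,-])$ needs to be invoked beyond what is already packaged into Theorem~\mref{quasildlb}. For the final sentence of the corollary, if $r$ is antisymmetric then $r + \tau(r) = 0$, which automatically yields $E(x)(r+\tau(r)) = F(x)(r+\tau(r)) = G(x)(r+\tau(r)) = 0$ for all $x \in A$, so the antisymmetric case is a direct specialization of the first statement.
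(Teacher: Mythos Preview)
Your proposal is correct and follows exactly the approach the paper intends: the paper states the corollary as a direct consequence of Theorem~\mref{quasildlb}, and your argument spells out precisely why---every term in the conditions \eqref{qclb1}--\eqref{cldl4} is a linear image of one of $\mathbf{C}(r)$, $\mathbf{D}(r)$, or $E(y)(r+\tau(r))$, $F(y)(r+\tau(r))$, $G(y)(r+\tau(r))$ for suitable $y$, all of which vanish by hypothesis. The treatment of the antisymmetric case via $r+\tau(r)=0$ is also exactly as intended.
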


Let $ V$  be a vector space and $r \in V \otimes V$. Define a linear map  $\tilde{r}: V^{*} \longrightarrow V$  by
\vsb
\begin{align*}
\left\langle \tilde{r}\left(u^{*}\right), v^{*}\right\rangle=\left\langle r, u^{*} \otimes v^{*}\right\rangle,     \quad \forall u^{*}, v^{*} \in V^{*}.
\end{align*}
\begin{pro}\mlabel{qLDCYBEO2}
Let $(A, \rhd, \lhd,[-,-])$  be a   \mysymbol algebra  and $r \in A \otimes A$ be antisymmetric. Then  $r$ is a solution of the
\PCYBE in $(A, \rhd, \lhd,[-,-])$ if and only if  $\tilde{r}$
satisfies the following equations.
\vsb
\begin{align}
\widetilde{r}\left(a^{*}\right) \rhd  \widetilde{r}\left(b^{*}\right)&=\widetilde{r}\big(L_{\diamond }^{*}(\tilde{r}(a^{*})) b^{*}+R_{\rhd }^{*}(\tilde{r}(b^{*})) a^{*}\big)  ,\mlabel{arod1}\\
\widetilde{r}\left(a^{*}\right) \lhd  \widetilde{r}\left(b^{*}\right) &=\widetilde{r}\big(R_{\bullet }^{*}(\tilde{r}(a^{*})) b^{*} - R_{\circ }^{*}(\tilde{r}(b^{*})) a^{*}\big)  ,\mlabel{arod2}\\
[\widetilde{r}\left(a^{*}\right) ,\widetilde{r}\left(b^{*}\right) ]  &=\widetilde{r}\big(\operatorname{ad}^{*}(\tilde{r}(a^{*})) b^{*}-\operatorname{ad}^{*}(\tilde{r}(b^{*})) a^{*}\big),\quad \forall a^*,b^* \in A ^*,\mlabel{arod3}
\end{align}
where  $\diamond$, $\circ$ and $\bullet$ are defined by
Eqs.~\eqref{new:symb}, \eqref{hpla} and \eqref{vpla}
respectively.
\end{pro}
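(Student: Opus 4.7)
The plan is to prove the equivalence by pairing each displayed identity with test covectors and reducing everything to the two tensor identities $\mathbf{C}(r)=0$ and $\mathbf{D}(r)=0$. Writing $r=\sum_{i}a_{i}\otimes b_{i}$ with $r=-\tau(r)$, the dual map satisfies $\tilde r(u^{*})=\sum_{i}\langle a_{i},u^{*}\rangle b_{i}=-\sum_{i}\langle b_{i},u^{*}\rangle a_{i}$; this antisymmetry identity is the workhorse that lets me replace internal sums over basis elements with values of $\tilde r$.

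Equation~\meqref{arod3} is the classical CYBE-to-operator-form correspondence in the Lie algebra $(A,[-,-])$, so I would dispose of it first. A direct expansion of $\langle \mathbf{C}(r),a^{*}\otimes b^{*}\otimes c^{*}\rangle$, together with two applications of antisymmetry, produces exactly the cyclic sum $\langle [\tilde r(b^{*}),\tilde r(c^{*})],a^{*}\rangle+\langle [\tilde r(c^{*}),\tilde r(a^{*})],b^{*}\rangle+\langle [\tilde r(a^{*}),\tilde r(b^{*})],c^{*}\rangle$, which is precisely \meqref{arod3} paired against $c^{*}$.

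For \meqref{arod1} and \meqref{arod2}, I introduce the scalar pairings $D(a^{*},b^{*},c^{*}):=\langle \mathbf{D}(r),a^{*}\otimes b^{*}\otimes c^{*}\rangle$, and $E_{1}(a^{*},b^{*},c^{*})$, $E_{2}(a^{*},b^{*},c^{*})$ obtained by pairing the \textit{left-hand side minus right-hand side} of \meqref{arod1} and \meqref{arod2} against $c^{*}$. Using the defining formula $\langle l^{*}(x)u^{*},v\rangle=-\langle u^{*},l(x)v\rangle$ for dual maps, together with the antisymmetry, each of $D$, $E_{1}$, $E_{2}$ unwinds into a short sum of triple pairings of the form $\langle \tilde r(\alpha^{*})\ast\tilde r(\beta^{*}),\gamma^{*}\rangle$ with $\ast\in\{\rhd,\lhd,\circ,\bullet\}$.

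The decisive bookkeeping identities, which use only the elementary definitions of $\diamond$, $\circ$, $\bullet$, are $E_{2}(a^{*},b^{*},c^{*})=D(c^{*},b^{*},a^{*})$ and $E_{1}(a^{*},b^{*},c^{*})+E_{2}(a^{*},b^{*},c^{*})=D(a^{*},b^{*},c^{*})$. Granted these, both directions follow at once: $\mathbf{D}(r)=0$ forces $D\equiv 0$, hence $E_{1}\equiv E_{2}\equiv 0$, giving \meqref{arod1} and \meqref{arod2}; conversely $E_{1}\equiv E_{2}\equiv 0$ forces $D=E_{1}+E_{2}\equiv 0$. The main obstacle I expect is purely computational: keeping track of the roughly dozen scalar terms produced when the summands $r_{13}\lhd r_{12}$, $r_{12}\bullet r_{23}$, $r_{13}\circ r_{23}$ of $\mathbf{D}(r)$ are expanded and then matched, via antisymmetry, against the dual-representation terms on the right-hand sides of \meqref{arod1} and \meqref{arod2}.
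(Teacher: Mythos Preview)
Your approach is essentially the same as the paper's: both pair each operator identity against a test covector $c^{*}$ and reduce to the tensor equations $\mathbf{C}(r)=0$ and $\mathbf{D}(r)=0$ via the antisymmetry of $r$, obtaining precisely your relation $E_{1}+E_{2}=D$ together with an expression of $E_{2}$ as a permuted copy of $D$. The paper's computation gives $E_{2}(a^{*},b^{*},c^{*})=-D(c^{*},b^{*},a^{*})$ (and hence $E_{1}=D+\sigma_{13}D$), so you should recheck the sign in your first bookkeeping identity, though this does not affect the logical structure of the equivalence.
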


\begin{proof}
Let $r=\sum\limits_{i} a_i \otimes b_i$. For $a^{*},b^{*},c^{*} \in A^*$, we have
\vsb
{\small
\begin{eqnarray*}
\left\langle \tilde{r}\left(a^{*}\right) \rhd \tilde{r}\left(b^{*}\right), c^{*}\right\rangle &= & \Big\langle  a^{*}  \otimes b^{*} \otimes c^{*},\sum_{i,j} a_j \otimes a_i \otimes b_j \rhd  b_i\Big\rangle
=: \left\langle  a^{*}  \otimes b^{*} \otimes c^{*},r_{13}\rhd  r_{23}\right\rangle;\\
\left\langle
\tilde{r}(L_{\diamond}^{*}\left(\tilde{r}\left(a^{*}\right)\right)
b^{*}), c^{*}\right\rangle &=& \Big\langle  a^{*}  \otimes b^{*}
\otimes c^{*},\sum_{i,j} a_j \otimes a_i\diamond b_j \otimes b_i
\Big\rangle
=: \left\langle   a^{*} \otimes b^{*}\otimes c^*  ,  r_{23} \diamond r_{12} \right\rangle; \\
 \left\langle \tilde{r}(R_{\rhd}^{*}\left(\tilde{r} \left(b^{*}\right)\right) a^{*}), c^{*}\right\rangle &=&  \Big\langle  a^{*}
\otimes b^{*} \otimes c^{*},\sum_{i,j} a_i\rhd a_j \otimes b_j
\otimes  b_i\Big\rangle=:  \left\langle   a^{*} \otimes
b^{*}\otimes c^*  , r_{13} \rhd r_{12}\right\rangle.
\end{eqnarray*}}
Thus we have
\vsb
\begin{eqnarray*}
\left\langle \tilde{r}\left(a^{*}\right) \rhd
\tilde{r}\left(b^{*}\right)-\tilde{r}\big(L_{\diamond}^{*}\left(\tilde{r}\left(a^{*}\right)\right)
b^{*}\big)-\tilde{r}\big(R_{\rhd}^{*}(\tilde{r} (b^{*}))
a^{*}\big), c^{*}\right\rangle
= \left\langle  a^{*}  \otimes b^{*} \otimes c^{*},\mathbf{D}(r)
+\sigma_{13}\mathbf{D}(r)\right\rangle.
\end{eqnarray*}
Similarly, we obtain
\vsb
\begin{align*}
 \left\langle \tilde{r}\left(a^{*}\right) \lhd \tilde{r}\left(b^{*}\right)-\tilde{r}(R_{\bullet}^*\left(\tilde{r}\left(a^{*}\right)\right) b^{*})+\tilde{r}\left(R_{\circ}^{*}(\tilde{r}  (b^{*}) ) a^{*}\right), c^{*}\right\rangle &=\left\langle a^{*} \otimes b^{*} \otimes c^{*}, -\sigma_{13}\mathbf{D}(r) \right\rangle,\\
\left\langle [\tilde{r}\left(a^{*}\right)
,\tilde{r}\left(b^{*}\right)]-\tilde{r}(\ad^*\left(\tilde{r}\left(a^{*}\right)\right)
b^{*})+\tilde{r}\left(\ad^{*}(\tilde{r}  (b^{*}) ) a^{*}\right),
c^{*}\right\rangle &=\left\langle a^{*} \otimes b^{*} \otimes
c^{*}, \mathbf{C}(r) \right\rangle,
\vsb
\end{align*}
where  $\sigma_{13}(x \otimes y \otimes z) =z \otimes y \otimes x$
for all $x, y, z \in A$.
Hence $\tilde{r}$ satisfies Eqs.~\eqref{arod1}--\eqref{arod3} if
and only if $\mathbf{C}(r) =\mathbf{D}(r)=0$, giving the conclusion. 
\end{proof}

\vse

\subsection{\texorpdfstring{$\mathcal{O}$}{O}-operators on \mysymbol algebras and pre-\mysymbol algebras}
\begin{defi}\mlabel{ldlrepres}
Let  $(A, \rhd,\lhd,[-,-])$ be a  \mysymbol algebra and
$V$  be a vector space. Let  $l_{\rhd}, r_{\rhd},l_{\lhd}, r_{\lhd}, \rho: A \rightarrow {\rm End}_{\mathbb{F}}(V)$  be linear maps.
A \textbf{representation} of $(A, \rhd,\lhd,[-,-])$  is a sextuple
$(V;l_{\rhd}, r_{\rhd},l_{\lhd}, r_{\lhd}, \rho)$
where
$(V;\rho)$ is a representation of the Lie algebra $(A,[-,-])$
and
$l_{\rhd}, r_{\rhd},l_{\lhd}, r_{\lhd}$ satisfy the following
equations:
\vsb
\begin{align}
r_{\lhd}([x , y])&= r_{\lhd}(x)\rho (y) - r_{\lhd}(y)\rho (x), \mlabel{ldlrep1}\\
l_{\lhd}(x)\rho (y)&= l_{\lhd}([x , y]) - r_{\lhd}(y)\rho (x),\mlabel{ldlrep2}\\
\rho (x)(l_{\lhd}+r_{\lhd})(y) &= (l_{\lhd}+r_{\lhd})([x , y]) = (l_{\lhd}+r_{\lhd})(x) \rho (y)= \rho(x \lhd y + y \lhd x) = 0,\mlabel{ldlrep3}\\
(l_{\rhd}-r_{\lhd})(x)\rho(y) &= \rho(x \circ y) + \rho(y)(l_{\rhd}-r_{\lhd})(x),\mlabel{ldlrep4}\\
(r_{\rhd}-l_{\lhd})([x,y]) &= \rho(x)(r_{\rhd}-l_{\lhd})(y) - \rho(y)(r_{\rhd}-l_{\lhd})(x),\mlabel{ldlrep5}\\
(l_{\rhd}+\rho)(x)l_{\lhd}(y) &= l_{\lhd}(x \bullet y)+ l_{\lhd}(y)(l_{\rhd}+l_{\lhd})(x),\mlabel{ldlrep6}\\
(l_{\rhd}+\rho)(x)r_{\lhd}(y) &= r_{\lhd}(x \circ y)+ r_{\lhd}(y)(l_{\rhd}-r_{\lhd})(x),\mlabel{ldlrep7}\\
r_{\rhd}(x \lhd y) &= r_{\lhd}(y)(r_{\rhd}-l_{\lhd})(x) + l_{\lhd}(x)(r_{\rhd}+r_{\lhd})(y)+\rho(x \lhd y),\mlabel{ldlrep8}\\
r_{\rhd}(x \rhd y) &=  l_{\rhd} (x) r_{\rhd} (y)- r_{\rhd}(y)(l_{\rhd}+l_{\lhd}-r_{\rhd}-r_{\lhd}+\rho)(x) \nonumber\\
&\quad -\rho(x)r_{\lhd}(y)- r_{\lhd}(y)\rho(x)-\rho(x\lhd y),\mlabel{ldlrep9}\\
l_{\rhd}(\{x,y\}) &= l_{\rhd}(x)l_{\rhd}(y) -
l_{\rhd}(y)l_{\rhd}(x)  +\rho(y)l_{\lhd}(x)- \rho(x)l_{\lhd}(y)-
l_{\lhd}([x,y]), \; \forall x,y\in A, \mlabel{ldlrep10}
\end{align}
where $\{-,-\}$, $\circ$ and $\bullet$ are defined by
Eqs.~\eqref{eq:curbra},~\eqref{hpla} and \eqref{vpla}
respectively.
\vsb
\end{defi}

\begin{lem}\mlabel{dualrepdpspl}
Let $(V;l_{\rhd}, r_{\rhd},l_{\lhd}, r_{\lhd}, \rho)$ be a
representation of a \mysymbol algebra $(A,\rhd,\lhd$, $[-,-])$.
Then $(V^*;l_{\rhd}^*-r_{\rhd}^*+l_{\lhd}^*-r_{\lhd}^*,r_{\rhd}^*
,r_{\rhd}^*-l_{\lhd}^*,-(r_{\rhd}^*+r_{\lhd}^*),\rho^*)$ is a
representation of  $(A,\rhd,\lhd,[-,-])$.
\end{lem}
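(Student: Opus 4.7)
My plan is to verify the ten axioms of Definition~\mref{ldlrepres} for the new sextuple $(l_\rhd^{\mathrm{new}}, r_\rhd^{\mathrm{new}}, l_\lhd^{\mathrm{new}}, r_\lhd^{\mathrm{new}}, \rho^{\mathrm{new}})$ on $V^*$ by direct dualization. The critical observation is that under the paper's convention~\eqref{dualmap}, composition of linear maps satisfies $(ST)^* = -T^*S^*$, so applying $*$ both reverses operator order and introduces a sign. In particular, the fact that $(V^*;\rho^*)$ is a representation of the Lie algebra $(A,[-,-])$ follows immediately from this rule applied to $\rho([x,y])=\rho(x)\rho(y)-\rho(y)\rho(x)$.

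For each of the remaining nine axioms, the strategy is uniform. Expand the new-sextuple identity using the explicit formulas for $l_\rhd^{\mathrm{new}},\dots,\rho^{\mathrm{new}}$; this yields an equation on $V^*$ of the form $T^* = 0$, equivalent by injectivity of $*$ to the equation $T = 0$ on $V$. Then one must check that $T$ arises as a suitable linear combination of the left-hand sides of the original identities (\mref{ldlrep1})--(\mref{ldlrep10}), most often invoking the vanishing relations $\rho(x)(l_\lhd+r_\lhd)(y) = 0$, $(l_\lhd+r_\lhd)(x)\rho(y) = 0$, $(l_\lhd+r_\lhd)([x,y]) = 0$ and $\rho(x\lhd y + y\lhd x) = 0$ from~(\mref{ldlrep3}). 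For example, to verify~(\mref{ldlrep1}) for the new sextuple, first combine~(\mref{ldlrep5}) with these vanishing relations to derive
\[
(r_\rhd+r_\lhd)([x,y]) = \rho(x)(r_\rhd+r_\lhd)(y) - \rho(y)(r_\rhd+r_\lhd)(x)
\]
on $V$; applying $*$ and substituting $r_\lhd^{\mathrm{new}} = -(r_\rhd^*+r_\lhd^*)$ and $\rho^{\mathrm{new}} = \rho^*$ yields exactly the new~(\mref{ldlrep1}). Similarly, the new~(\mref{ldlrep4}) reduces to the post-Lie representation identity~\eqref{eq6} for the horizontal representation $(V;l_\circ,r_\circ,\rho)$, modulo again the vanishing relations from~(\mref{ldlrep3}).

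The principal obstacle is the bookkeeping for axioms such as (\mref{ldlrep8})--(\mref{ldlrep10}), where the expanded new-sextuple identities produce many mixed compositions of the $l$'s, $r$'s and $\rho$ that must each be traced through $*$ back to combinations of originals. However, two features keep the verification tractable: the vanishing relations of~(\mref{ldlrep3}) render that axiom itself almost self-dual under $*$, and each new-sextuple axiom has been designed so that its $*$-dual on $V$ collapses (after straightforward rearrangement) to a specific linear combination of originals. So although tedious, the verification is mechanical, and no essentially new identity beyond (\mref{ldlrep1})--(\mref{ldlrep10}) needs to be invoked.
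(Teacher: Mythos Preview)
Your proposal is correct and follows essentially the same approach as the paper's own proof: the paper likewise verifies the axioms by dualizing via the pairing~\eqref{dualmap}, carries out the computation for~\eqref{ldlrep1} explicitly (reducing it to~\eqref{ldlrep5} after invoking the vanishing relations in~\eqref{ldlrep3}, exactly as you describe), and then dispatches the remaining axioms with ``similarly''. Your write-up is in fact slightly more detailed than the paper's, but the strategy and the key use of~\eqref{ldlrep3} are identical.
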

\begin{proof}
For all $x,y \in A$, $v \in V$, $w^* \in V^*$, we have
\begin{eqnarray*}
 &&\big\langle \big(-(r_{\rhd}^*+r_{\lhd}^*)([x , y])+(r_{\rhd}^*+r_{\lhd}^*)(x)\rho^* (y) -(r_{\rhd}^*+r_{\lhd}^*)(y)\rho^* (x)\big)w^* ,v  \big\rangle \\
 &&\overset{\hphantom{\eqref{ldlrep3}}}{=}  \big\langle w^* ,\big((r_{\rhd}+r_{\lhd})([x , y])+\rho (y)(r_{\rhd}+r_{\lhd})(x) -\rho  (x)(r_{\rhd} +r_{\lhd} )(y)\big)v  \big\rangle \\
 &&\overset{\eqref{ldlrep3}}{=} \big\langle w^* ,\big((r_{\rhd}-l_{\lhd})([x , y])+\rho (y)(r_{\rhd}-l_{\lhd})(x) -\rho  (x)(r_{\rhd} -l_{\lhd} )(y)\big)v  \big\rangle \overset{\eqref{ldlrep5}}{=} 0.
\end{eqnarray*}
Thus Eq.~\eqref{ldlrep1} holds. Similarly, Eqs.~\eqref{ldlrep2}--\eqref{ldlrep10} hold. This completes the proof.
\vsb
\end{proof}

\begin{ex}
Let  $(A, \rhd,\lhd,[-,-])$ be a \mysymbol algebra. Then
$(A;L_{\rhd}, R_{\rhd},L_{\lhd},  R_{\lhd},  \mathrm{ad})$  is a
representation of  $(A, \rhd,\lhd,[-,-])$, which is  called the
\textbf{adjoint representation} of  $(A, \rhd,\lhd,[-,-])$.
Moreover,
$(A^*;L_{\diamond}^*,R_{\rhd}^* ,R_{\bullet}^*,-R_{\circ}^*,
\mathrm{ad}^*):=(A^*;L_{\rhd}^*-R_{\rhd}^*+L_{\lhd}^*-R_{\lhd}^*,R_{\rhd}^*
,R_{\rhd}^*-L_{\lhd}^*,-(R_{\rhd}^*+R_{\lhd}^*), \mathrm{ad}^*) $
is also a representation  of $(A, \rhd,\lhd,[-,-])$, which is
called the {\bf coadjoint representation}.
\vsb
\end{ex}

\begin{pro}
Let  $(A, \rhd,\lhd,[-,-])$  be a  \mysymbol algebra, $V$ be a
vector space  and  $l_{\rhd}, r_{\rhd}$, $l_{\lhd}$,
$r_{\lhd},\rho: A\rightarrow \mathrm{End}_{\mathbb F}(V)$ be
linear maps. Then $(V;l_{\rhd}, r_{\rhd},l_{\lhd}, r_{\lhd},
\rho)$ is a representation of $(A$, $\rhd$, $\lhd$, $[-,-])$  if
and only if there is a \mysymbol algebra structure on $A \oplus V$
with $\rhd_{A\oplus V},\lhd_{A\oplus V},$ $[-,-]_{A\oplus V}$
respectively defined by
\vsb
\begin{align}
\left(x_{1}+v_{1}\right) \rhd_{A \oplus V} \left(x_{2}+v_{2}\right) &= x_{1} \rhd x_{2}+l_{\rhd}\left(x_{1}\right) v_{2}+r_{\rhd}\left(x_{2}\right) v_{1}, \mlabel{ldlr1}\\
\left(x_{1}+v_{1}\right) \lhd_{A \oplus V} \left(x_{2}+v_{2}\right) &= x_{1}\lhd x_{2}+l_{\lhd}\left(x_{1}\right) v_{2}+r_{\lhd}\left(x_{2}\right) v_{1}, \mlabel{ldlr2}\\
\left[x_{1}+v_{1}, x_{2}+v_{2}\right]_{A \oplus V} &=
[x_{1},x_{2}]+\rho(x_1)v_2 - \rho(x_2)v_1,\; \forall x_{1},
x_{2} \in A, v_{1}, v_{2} \in V. \mlabel{ldlr3}
\end{align}
We denote it by $A \ltimes_{l_{\rhd}, r_{\rhd},l_{\lhd},
r_{\lhd},\rho}V$.
\vsb
 \end{pro}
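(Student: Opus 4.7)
The plan is to verify both implications by direct substitution. Given elements $X_i = x_i + v_i \in A \oplus V$ for $i=1,2,3$, I will insert the formulas \eqref{ldlr1}--\eqref{ldlr3} into each of the five defining axioms \eqref{DPSPL1}--\eqref{DPSPL5} of a \mysymbol algebra and expand the result using multilinearity. Each axiom, after substitution, splits naturally by the total degree in the $v_i$'s: the pure $A$-component (degree zero in $v_i$) records an identity in $A$, and the $V$-components (degree one in $v_i$) record identities for the linear maps $l_\rhd, r_\rhd, l_\lhd, r_\lhd, \rho$ acting on $V$. Since the product rules \eqref{ldlr1}--\eqref{ldlr3} produce no terms involving two or more $v_i$'s (the component in $V$ of a product of two elements is linear in at most one of the two $V$-entries), no higher-degree terms arise.

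For the forward direction, assuming $(V; l_\rhd, r_\rhd, l_\lhd, r_\lhd, \rho)$ is a representation, the pure $A$-components reproduce Eqs.~\eqref{DPSPL1}--\eqref{DPSPL5} for the given \mysymbol algebra $(A, \rhd, \lhd, [-,-])$, and the $V$-components become identities that hold by Eqs.~\eqref{ldlrep1}--\eqref{ldlrep10} together with the assumption that $(V; \rho)$ is a representation of the Lie algebra $(A, [-,-])$. Concretely, \eqref{DPSPL1} yields \eqref{ldlrep1} and \eqref{ldlrep2} upon inspecting the $v_1$-linear and $v_2$-linear pieces respectively; \eqref{DPSPL2} yields the various vanishing identities in \eqref{ldlrep3}; \eqref{DPSPL3} yields \eqref{ldlrep4} and \eqref{ldlrep5}; \eqref{DPSPL4} yields \eqref{ldlrep6}, \eqref{ldlrep7} and \eqref{ldlrep8}; and \eqref{DPSPL5}, after observing that the bracket $\{X_1, X_2\}$ from \eqref{eq:curbra} restricted to $A \oplus V$ specializes correctly, yields \eqref{ldlrep9} and \eqref{ldlrep10}. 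This gives that $A \oplus V$ with the operations \eqref{ldlr1}--\eqref{ldlr3} is a \mysymbol algebra.

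For the converse direction, one runs the same expansion but now reads it from the other side: the assumed \mysymbol algebra identities on $A \oplus V$, when restricted to elements of the form $(x,0,0)$, $(0,v,0)$, or $(0,0,w)$ in various combinations, force each of the ten representation axioms in turn. To recover $(V; \rho)$ being a representation of $(A,[-,-])$, it suffices to consider \eqref{DPSPL2} (or the Jacobi identity of $[-,-]_{A\oplus V}$ which is packaged into the \mysymbol axioms via the Lie algebra condition on the first slot); alternatively, noting that the associated horizontal post-Lie algebra of $A \oplus V$ has $(A,[-,-])$ as a Lie subalgebra and $\rho$ giving the $V$-action, this follows via Corollary~\ref{co:aa}.

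The main obstacle is purely bookkeeping: the five \mysymbol axioms expand into many multilinear pieces, and one must carefully match each $V$-linear component with the correct representation axiom among the ten listed in Definition~\ref{ldlrepres}. The presence of the composite brackets $\{-,-\}$, $\circ$, $\bullet$ and the subtractions $l_\rhd - r_\lhd$, $r_\rhd - l_\lhd$ etc.~appearing in \eqref{ldlrep4}--\eqref{ldlrep10} is not a surprise: these are exactly the combinations that appear when one separates, e.g., the $v_1$-piece from the $v_2$-piece in the expansion of $X_1 \rhd (X_2 \lhd X_3)$. Once the bookkeeping is set up (for instance by decorating each axiom with the slot into which each $v_i$ is fed), the verification is mechanical and no further ideas beyond linearity are required.
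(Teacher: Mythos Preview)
Your proposal is correct and takes essentially the same approach as the paper, which simply states that the result ``can be checked directly by Definitions~\mref{ldldef} and~\mref{ldlrepres}.'' Your outline of how the $V$-linear components of each axiom \eqref{DPSPL1}--\eqref{DPSPL5} match up with the representation identities \eqref{ldlrep1}--\eqref{ldlrep10} is in fact considerably more detailed than what the paper provides.
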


\begin{proof}
 It can be checked directly by Definitions \mref{ldldef} and \mref{ldlrepres}.
\vsb
\end{proof}

 \begin{defi}
 Let $(V;l_{\rhd}, r_{\rhd},l_{\lhd},  r_{\lhd}, \rho)$ be a representation of  a   \mysymbol algebra   $(A,\rhd$, $\lhd$, $[-,-])$. A linear map $T: V \to A$ is called an {\bf $\mathcal{O}$-operator
  on $(A,\rhd,\lhd,[-,-])$ associated to $(V;l_{\rhd}, r_{\rhd},l_{\lhd}, r_{\lhd}, \rho)$} if $T$
  satisfies the following conditions.
\vsb
\begin{align}
T(u) \rhd  T(v) &= T \big(l_{\rhd}(T(u)) v + r_{\rhd}(T(v)) u \big), \mlabel{LDLOP1}\\
T(u) \lhd  T(v) &= T \big(l_{\lhd}(T(u)) v + r_{\lhd}(T(v)) u \big),\mlabel{LDLOP2}\\
[T(u), T(v)] &= T \big(\rho(T(u))v-\rho(T(v)) u \big),\quad \forall u, v \in V. \mlabel{LDLOP3}
\end{align}
In particular, if $P$ is an   $\mathcal{O}$-operator on  a
\mysymbol algebra  $(A,\rhd,\lhd,[-,-])$ associated to the adjoint
representation
 $(A;L_{\rhd}, R_{\rhd},L_{\lhd}, R_{\lhd}, \ad)$, then $P$ is called a {\bf Rota-Baxter operator  of weight  zero} on  $(A,\rhd,\lhd,[-,-])$.
\vsc
\end{defi}

Then Proposition~\mref{qLDCYBEO2} can be restated as follows.
\vsb
\begin{cor}\mlabel{LDCYBEO}
Let $(A, \rhd, \lhd,[-,-])$  be a   \mysymbol algebra  and $r \in
A \otimes A$ be  antisymmetric.  Then $r$ is a solution of the
\PCYBE in $(A, \rhd, \lhd,[-,-])$ if and only if $\tilde{r}$ is an
$\mathcal{O}$-operator on $(A, \rhd, \lhd,[-,-])$ associated to
the representation $(A^*;L_{\diamond}^*,R_{\rhd}^*
,R_{\bullet}^*,-R_{\circ}^*, \mathrm{ad}^*)$.
\end{cor}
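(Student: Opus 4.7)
The plan is to deduce this corollary directly from Proposition \mref{qLDCYBEO2} by matching definitions. The proposition characterizes $r$ being an antisymmetric solution of the \PCYBE via three identities satisfied by $\tilde{r}$, while the corollary rephrases the same content in the language of $\mathcal{O}$-operators on the \mysymbol algebra $(A, \rhd, \lhd, [-,-])$ associated to the coadjoint representation. So the only thing to verify is that the three conditions
\begin{align*}
\tilde{r}(a^{*}) \rhd \tilde{r}(b^{*}) &= \tilde{r}\bigl(L_{\diamond}^{*}(\tilde{r}(a^{*})) b^{*}+R_{\rhd}^{*}(\tilde{r}(b^{*})) a^{*}\bigr), \\
\tilde{r}(a^{*}) \lhd \tilde{r}(b^{*}) &= \tilde{r}\bigl(R_{\bullet}^{*}(\tilde{r}(a^{*})) b^{*} - R_{\circ}^{*}(\tilde{r}(b^{*})) a^{*}\bigr), \\
[\tilde{r}(a^{*}), \tilde{r}(b^{*})] &= \tilde{r}\bigl(\mathrm{ad}^{*}(\tilde{r}(a^{*})) b^{*}-\mathrm{ad}^{*}(\tilde{r}(b^{*})) a^{*}\bigr)
\end{align*}
appearing in Proposition \mref{qLDCYBEO2} are exactly Eqs.~\eqref{LDLOP1}--\eqref{LDLOP3} specialized to the sextuple $(A^*;L_{\diamond}^*,R_{\rhd}^*,R_{\bullet}^*,-R_{\circ}^*,\mathrm{ad}^*)$.

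First, I would read off the assignment $l_{\rhd}=L_{\diamond}^*$, $r_{\rhd}=R_{\rhd}^*$, $l_{\lhd}=R_{\bullet}^*$, $r_{\lhd}=-R_{\circ}^*$, $\rho=\mathrm{ad}^*$ from the data of the coadjoint representation, and substitute into the defining relations \eqref{LDLOP1}--\eqref{LDLOP3} of an $\mathcal{O}$-operator. A direct symbol-by-symbol comparison shows these are precisely the three equations above. Thus the equivalence in the corollary is immediate from the equivalence already established in Proposition \mref{qLDCYBEO2}.

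The only point that deserves a short explicit remark, rather than being an obstacle, is that the coadjoint representation is indeed a representation of the \mysymbol algebra; this is not something to re-prove inside the corollary, since it is recorded just before the corollary (as an instance of Lemma \mref{dualrepdpspl} applied to the adjoint representation). Consequently, the proof is one sentence: $\tilde{r}$ satisfies Eqs.~\eqref{LDLOP1}--\eqref{LDLOP3} with respect to $(A^*;L_{\diamond}^*,R_{\rhd}^*,R_{\bullet}^*,-R_{\circ}^*,\mathrm{ad}^*)$ if and only if it satisfies Eqs.~\eqref{arod1}--\eqref{arod3}, which by Proposition \mref{qLDCYBEO2} is equivalent to $r$ being an antisymmetric solution of the \PCYBE.
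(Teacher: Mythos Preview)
Your proposal is correct and matches the paper's approach exactly: the paper presents this corollary as a direct restatement of Proposition~\mref{qLDCYBEO2}, and your symbol-by-symbol identification of Eqs.~\eqref{arod1}--\eqref{arod3} with Eqs.~\eqref{LDLOP1}--\eqref{LDLOP3} under the coadjoint representation $(A^*;L_{\diamond}^*,R_{\rhd}^*,R_{\bullet}^*,-R_{\circ}^*,\mathrm{ad}^*)$ is precisely what is intended.
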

\vsc

 \begin{thm}\mlabel{oggs}
  Let $(V;l_{\rhd}, r_{\rhd},l_{\lhd},  r_{\lhd}, \rho)$ be a representation of  a   \mysymbol algebra   $(A,\rhd$, $\lhd$, $[-,-])$.
  Set $\hat{A} =  A \ltimes_{l_{\rhd}^{*}-r_{\rhd}^{*}+l_{\lhd}^{*}-r_{\lhd}^{*}, r_{\rhd}^{*}, r_{\rhd}^{*}-l_{\lhd}^{*},-(r_{\rhd}^{*}+r_{\lhd}^{*}),\rho^*} V^{*}$. Let  $T: V \rightarrow A$  be a linear map which is identified as an element in the vector space  $\hat{A}  \otimes \hat{A}$  $($through $\operatorname{Hom}_{\mathbb{F}}(V, A) \cong V^{*} \otimes A \subseteq \hat{A} \otimes \hat{A} $$)$. Then  $r=T-\tau(T)$  is an antisymmetric solution of the \PCYBE in the \mysymbol algebra  $\hat{A}$  if and only if  $T$  is an  $\mathcal{O}$-operator on  $(A, \rhd, \lhd,[-,-])$  associated to  $\left(V;l_{\rhd}, r_{\rhd}, l_{\lhd}, r_{\lhd},\rho\right)$.
  \end{thm}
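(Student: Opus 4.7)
\textbf{Proof plan for Theorem~\mref{oggs}.}
The plan is to carry out a direct expansion of $\mathbf{C}(r)$ and $\mathbf{D}(r)$ inside $\hat A^{\otimes 3}$, using the semidirect-product structure of $\hat A = A\oplus V^*$ to cut down the number of surviving monomials, and then read off the three $\mathcal{O}$-operator identities \meqref{LDLOP1}--\meqref{LDLOP3} as the only remaining constraints. Fix dual bases $\{e_i\}$ of $V$ and $\{e^i\}$ of $V^*$. Under the identification $\operatorname{Hom}_{\mathbb F}(V,A)\cong V^*\otimes A$ the map $T$ becomes $\sum_i e^i\otimes T(e_i)\in V^*\otimes A\subseteq \hat A\otimes \hat A$, so
\vsb
\begin{equation*}
r = T - \tau(T) = \sum_i\bigl(e^i\otimes T(e_i)-T(e_i)\otimes e^i\bigr)\in \hat A\otimes\hat A
\end{equation*}
is antisymmetric. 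The first step is to expand each of $r_{12},r_{13},r_{23}$ in $\hat A^{\otimes 3}$ and substitute.

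\textbf{Step 1 (sector reduction).} Decompose $\hat A^{\otimes 3}=\bigoplus_{S\subseteq\{1,2,3\}}W_S$ where $W_S$ has $V^*$ in positions $S$ and $A$ elsewhere. Because $V^*$ is an abelian ideal in $\hat A$ under all three operations (its $\rhd_{\hat A}, \lhd_{\hat A},$ and $[-,-]_{\hat A}$ restrict to zero on $V^*\otimes V^*$), every monomial in $\mathbf{C}(r)$ or $\mathbf{D}(r)$ that multiplies two $V^*$-factors together is automatically zero. The antisymmetry of $r$ plus this vanishing concentrates the outputs into the sectors $W_{\{1\}}, W_{\{2\}}, W_{\{3\}}$ (components with one $V^*$-entry) together with their $|S|=2$ counterparts where the $A$-factor appears through the $A$-action on $V^*$.

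\textbf{Step 2 (matching to the $\mathcal{O}$-operator equations).} In the surviving sectors, the operations on $\hat A$ simplify to the dual actions of Lemma~\mref{dualrepdpspl}: $L_{\rhd_{\hat A}}(x)=l_\rhd^*-r_\rhd^*+l_\lhd^*-r_\lhd^*=l_\diamond^*$, $R_{\rhd_{\hat A}}(x)=r_\rhd^*$, and so on, together with the dual Lie action $\mathrm{ad}_{\hat A}^{*}=\rho^{*}$ on $V^*$. Pair the output of $\mathbf{C}(r)=0$ (resp.\ $\mathbf{D}(r)=0$) against arbitrary $u\otimes v\in V\otimes V$ inserted into any two of the three tensor slots (the remaining slot being an element of $A$); using the defining identity $\sum_i\langle e^i,\cdot\rangle e_i=\mathrm{id}_V$, the three cyclic $W_{\{k\}}$-components of $\mathbf{C}(r)$ collapse to the single identity
\vsb
\begin{equation*}
[T(u),T(v)]=T\bigl(\rho(T(u))v-\rho(T(v))u\bigr),
\end{equation*}
i.e.\ \meqref{LDLOP3}. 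Analogously, the three summands of $\mathbf{D}(r)$ (namely $r_{13}\lhd r_{12}$, $r_{12}\bullet r_{23}$, $r_{13}\circ r_{23}$) land in complementary pieces: the $\bullet$-term contributes through the middle slot precisely because $\bullet_{\hat A}=\rhd_{\hat A}-(\tau\lhd_{\hat A})$ produces the combination needed so that the $A\otimes V^*\otimes A$ component expresses $T(u)\rhd T(v)-T(l_\rhd(T(u))v+r_\rhd(T(v))u)$, yielding \meqref{LDLOP1}. The orthogonal $V^*\otimes A\otimes A$ and $A\otimes A\otimes V^*$ components yield \meqref{LDLOP2}. Each implication is reversible since the test pairings can be chosen arbitrarily.

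\textbf{Main obstacle.} The hardest part is bookkeeping: there are twelve monomials (six in $\mathbf{C}$ and six in $\mathbf{D}$), each splitting into up to four sub-terms after substituting the four summands of each $r_{ij}$, and one must verify that after the $V^*\otimes V^*=0$ cancellations the surviving pieces group cleanly, with the correct signs, into exactly the three operator equations. Keeping the combinations $\diamond, \circ, \bullet$ on the $\hat A$-side aligned with $l_\rhd,l_\lhd,\rho$ on the $\mathcal O$-operator side requires careful tracking of which dual action ($l_\rhd^*-r_\rhd^*+l_\lhd^*-r_\lhd^*$ versus $r_\rhd^*-l_\lhd^*$ etc.) enters each slot. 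To manage this cleanly, I would tabulate the contribution of each monomial to each sector $W_S$ before assembling the final identities, which reduces the verification to a linear-algebra check that a specific $12\times 3$ matrix of dual-action terms produces the three $\mathcal O$-operator identities.
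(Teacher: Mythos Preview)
Your plan is correct and is precisely the direct computation that the paper defers to by citing \cite[Theorem 4.14]{BLN}; the paper gives no argument beyond that reference, so what you have sketched is effectively the intended proof.

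One small correction to your sector analysis: after using $V^*\cdot V^*=0$, every surviving monomial of $\mathbf{C}(r)$ and $\mathbf{D}(r)$ lands in an $|S|=2$ sector (two $V^*$ slots and one $A$ slot), not in the $|S|=1$ sectors $W_{\{k\}}$ you name first. For instance, in $[r_{12},r_{13}]$ the term $[T(e_i),T(e_j)]\otimes e^i\otimes e^j$ lies in $A\otimes V^*\otimes V^*$, and the mixed brackets $[T(e_i),e^j]_{\hat A}\in V^*$ push the other terms into $V^*\otimes V^*\otimes A$ and $V^*\otimes A\otimes V^*$. Your pairing against $u\otimes v\in V\otimes V$ in two slots is exactly the right move, and it is consistent with this: it collapses two $V^*$ factors and leaves one $A$ slot, so you are in fact reading off the equations from the three $|S|=2$ sectors rather than from $A\otimes V^*\otimes A$. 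Adjust your tabulation accordingly and the bookkeeping goes through.
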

\vsc
  \begin{proof}
 It is similar to the proof of \cite[Theorem 4.14]{BLN}.
\vsb
  \end{proof}

 \begin{defi}
A  \textbf{pre-\mysymbol algebra} is a sextuple-tuple  $(A ,\searrow , \nearrow,\swarrow ,\nwarrow,\cdot)$  such that  $(A, \cdot)$   is a pre-Lie algebra and $ \searrow, \nearrow ,\swarrow,\nwarrow: A \otimes A \rightarrow A$  are binary operations satisfying
\vsb
{\small 
\begin{align}
x \nwarrow  [y,z]  &=   (z \cdot x) \nwarrow  y  -  (y \cdot x) \nwarrow  z,\mlabel{pldl1}\\
x \swarrow  (y \cdot z)  &=   [x,y] \swarrow z  -  (x \cdot z) \nwarrow  y,\mlabel{pldl2}\\
x \cdot (y \swarrow z + z \nwarrow y) &=   x \swarrow (y \cdot z) + (y \cdot z) \nwarrow y    = 0,\mlabel{pldl3}\\
[x,y]\swarrow z + z \nwarrow[x,y] &= (x \lhd y + y \lhd x) \cdot z = 0,\mlabel{pldl4}\\
x \vee (y \cdot z)   &= (x \circ y) \cdot z + y \cdot (x \vee z  ),\mlabel{pldl5}\\
  x \wedge [y,z]    &=   y \cdot (x \wedge z ) - z \cdot (x \wedge  y),\mlabel{pldl6}\\
x \searrow(y \swarrow z)+ x \cdot (y \swarrow z)  &=y \swarrow(x \vee z)+(x \searrow y+x \nearrow y-y \swarrow x-y \nwarrow x) \swarrow z,\mlabel{pldl7}\\
x \searrow(y \nwarrow z)+ x \cdot (y \nwarrow z)&=y \nwarrow(x \circ z) + (x \searrow y - y \nwarrow x) \nwarrow z , \mlabel{pldl8}\\
x \nearrow(y \lhd z) - (y \lhd z) \cdot x& =y \swarrow(x \wedge z) +(x \nearrow y -y \swarrow x) \nwarrow z,\mlabel{pldl9}\\
x \searrow(y \nearrow z)-y \nearrow(x \rhd z) &= (x \vee y - y \wedge x) \nearrow z + x \cdot (y \nwarrow z)+\! (x \cdot y)\nwarrow z + \!(x \lhd z) \cdot y,\mlabel{pldl10}\\
 \{x, y\}   \searrow z+  [x,y] \swarrow z &=x \searrow(y \searrow z)-y \searrow(x \searrow z)+ y \cdot ( x\swarrow z) -x \cdot ( y\swarrow z), \mlabel{pldl11}
\end{align}
}
for all $x,y,z \in A$, where
{\small
\begin{align*}
[x,y] &:=  x \cdot y - y \cdot x, \hspace{4cm}  x \rhd y  :=x \searrow y+x \nearrow y, \quad x \lhd y:= x \nwarrow y+x \swarrow y, \\
x \circ y &:=x \searrow y+x \nearrow y+x \swarrow y+x \nwarrow y,\hspace{0.32cm}x \vee y : =  x \searrow y+x \swarrow y, \hspace{0.42cm}  x \wedge y := x \nearrow y+x \nwarrow y,\\
\{x, y\}&:=  x \searrow y+x \nearrow y+x \swarrow y +x \nwarrow y- y \searrow x-y \nearrow x-y \swarrow x-y \nwarrow x + x \cdot y - y \cdot x.
\end{align*}
}
\end{defi}
\vsd
\begin{rmk}
When $\cdot = 0$,  a pre-\mysymbol algebra reduces to an L-quadri
algebra. The latter is regarded as the Lie algebraic analog of
a quadri-algebra~\mcite{LNB}. Moreover the operad of L-quadri
algebras is the successor of the operad of L-dendriform algebras
\mcite{BBGN}.
\vsb
\end{rmk}

\begin{pro}  \mlabel{preldltoldl}
Let  $(A ,\searrow , \nearrow ,\swarrow,\nwarrow,\cdot)$  be a pre-\mysymbol algebra. Define
\vsc
\begin{align}
x \rhd y : = x \searrow  y + x  \nearrow y,\hspace{0.2cm}x \lhd y  := x \swarrow y +  x \nwarrow y , \hspace{0.2cm} [x,y]  := x \cdot y - y \cdot x, \hspace{0.2cm}  \forall x,y\in A. \mlabel{pLDL}
\vsb
\end{align}
Then $(A,\rhd,\lhd,[-,-])$ is  a  \mysymbol algebra,  called the
{\bf sub-adjacent \mysymbol algebra} of   $(A ,\searrow$,
$\nearrow,\swarrow,\nwarrow,\cdot)$, and $(A ,\searrow , \nearrow
,\swarrow,\nwarrow,\cdot)$  is   called a {\bf compatible
pre-\mysymbol algebra} structure on the \mysymbol algebra
$(A,\rhd,\lhd,[-,-])$. Moreover, $(A; L_{\searrow },
R_{\nearrow},L_{\swarrow}, R_{\nwarrow},L_{\cdot})$ is a
representation of the \mysymbol algebra $(A,\rhd,\lhd,[-,-])$.
\vsb
\end{pro}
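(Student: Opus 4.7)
The plan is a direct verification, following the operadic philosophy that pre-\mysymbol algebras are the successor of \mysymbol algebras: summing the appropriate pairs of the four split operations $\searrow,\nearrow,\swarrow,\nwarrow$ should reproduce the \mysymbol identities, and these same operations acting on the left/right provide the representation maps.

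\textbf{Step 1 (Lie bracket).} Antisymmetry of $[x,y]=x\cdot y-y\cdot x$ is immediate, and the Jacobi identity is the classical fact that the commutator of any left-symmetric product is a Lie bracket, which uses only that $(A,\cdot)$ is pre-Lie.

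\textbf{Step 2 (\mysymbol axioms).} I would verify \eqref{DPSPL1}--\eqref{DPSPL5} one at a time. For \eqref{DPSPL1}, write $x\lhd[y,z]=x\swarrow(y\cdot z)-x\swarrow(z\cdot y)+x\nwarrow[y,z]$, apply \eqref{pldl2} twice and \eqref{pldl1} once, and collect; the surviving terms regroup into $[x,y]\lhd z+[z,x]\lhd y$ using $[z,x]=-[x,z]$. Equation \eqref{DPSPL2} follows by combining \eqref{pldl3} with \eqref{pldl4}. For \eqref{DPSPL3} expand both sides in the five building blocks $\searrow,\nearrow,\swarrow,\nwarrow,\cdot$ and invoke \eqref{pldl5}--\eqref{pldl6}; for \eqref{DPSPL4} use \eqref{pldl7}--\eqref{pldl10}; for \eqref{DPSPL5} use \eqref{pldl11}, with the auxiliary identities already established. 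Throughout, the identifications
\[
x\circ y=x\searrow y+x\nearrow y+x\swarrow y+x\nwarrow y,\qquad x\bullet y=x\rhd y-y\lhd x
\]
are used to convert between the \mysymbol operations and the pre-\mysymbol operations.

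\textbf{Step 3 (Representation).} Set $l_\rhd:=L_\searrow$, $r_\rhd:=R_\nearrow$, $l_\lhd:=L_\swarrow$, $r_\lhd:=R_\nwarrow$, $\rho:=L_\cdot$. That $(A;L_\cdot)$ is a representation of $(A,[-,-])$ is the standard pre-Lie identity $[L_\cdot(x),L_\cdot(y)]=L_\cdot([x,y])$. Each of \eqref{ldlrep1}--\eqref{ldlrep10}, evaluated on an arbitrary element $z\in A$, reduces term-for-term to an instance (or a short combination) of \eqref{pldl1}--\eqref{pldl11}; for example, \eqref{ldlrep1} applied to $z$ is precisely \eqref{pldl4} (in its nontrivial half together with \eqref{pldl1}), and so on.

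\textbf{Main obstacle.} No single step is conceptually difficult; the hard part is the bookkeeping. The pre-\mysymbol algebra has eleven defining identities, whereas we must verify five \mysymbol identities plus ten representation identities, and each target identity is matched by a specific linear combination of the eleven. The existence of this matching is essentially guaranteed by the successor construction of \mcite{BBGN}, but carrying it out requires careful tracking of which of $\searrow,\nearrow,\swarrow,\nwarrow$ contributes to which side of each equation, together with consistent use of the relations among $\rhd,\lhd,\circ,\bullet,\{-,-\}$ and $[-,-]$ collected in \eqref{pLDL} and Proposition~\mref{pspleuqvi}.
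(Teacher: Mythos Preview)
Your proposal is correct and takes essentially the same approach as the paper: a direct expansion of each \mysymbol axiom in terms of the five pre-\mysymbol operations, followed by invocation of the appropriate subset of \eqref{pldl1}--\eqref{pldl11}. The paper carries out the verification of \eqref{DPSPL1} explicitly (using \eqref{pldl1} once and \eqref{pldl2} twice, exactly as you outline) and declares the remaining axioms and the representation claim to be similar/straightforward.
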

\begin{proof}
Let $x,y,z \in A$. Then we have
\vsb
\begin{eqnarray*}
&&x \lhd [y,z] -   [x,y] \lhd z -   [z,x] \lhd y\\
&&\overset{\eqref{pLDL}}{=} x  \swarrow (y \cdot z - z \cdot y) + x  \nwarrow [y ,z] -  [x,y] \swarrow  z - (x \cdot y - y \cdot x) \nwarrow  z \\
&& \hspace{0.7cm}-  [z,x]  \swarrow  y - (z \cdot x - x \cdot z) \nwarrow  y\\
&&\overset{\eqref{pldl1}}{=} x  \swarrow (y \cdot z - z \cdot y)  - [x,y] \swarrow  z  - (x \cdot y ) \nwarrow  z  +[x,z] \swarrow  y  + (  x \cdot z) \nwarrow  y\\
&&\overset{\eqref{pldl2}}{=}   [x,z] \swarrow  y -x  \swarrow (  z
\cdot y)  - (x \cdot y ) \nwarrow
z\overset{\hphantom{\eqref{ldlrep3}}}{=}  0.
\end{eqnarray*}
Hence Eq.~\eqref{DPSPL1} holds. Similarly,
Eqs.~\eqref{DPSPL2}--\eqref{DPSPL5} hold.  Thus
$(A,\rhd,\lhd,[-,-])$ is a  \mysymbol algebra. Moreover, it is
straightforward to show that $(A; L_{\searrow },
R_{\nearrow},L_{\swarrow}, R_{\nwarrow},L_{\cdot})$ is a
representation of $(A,\rhd,\lhd,[-,-])$.
\vsc
\end{proof}

\begin{thm}\mlabel{ofnagz}
Let $T: V \rightarrow A $ be an  $\mathcal{O}$-operator on a
\mysymbol algebra $(A, \rhd,\lhd,[-,-])$  associated to a
representation  $(V;l_{\rhd}, r_{\rhd},l_{\lhd}, r_{\lhd}, \rho)$.
Define
\vsb
\begin{align*}
u \searrow  v &:= l_{\rhd}(T(u)) v, \quad u \nearrow v:= r_{\rhd}(T(v)) u, \quad u \cdot v := \rho(T(u)) v,\\
 u \swarrow v &:= l_{\lhd}(T(u)) v,\quad u  \nwarrow v := r_{\lhd}(T(v)) u,\quad \forall u,v \in V.
\end{align*}
Then   $(V,\searrow , \nearrow ,\swarrow,\nwarrow,\cdot)$  is a pre-\mysymbol algebra.
\vsb
\end{thm}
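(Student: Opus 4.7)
The plan is to verify each of the defining identities \meqref{pldl1}--\meqref{pldl11} for the sextuple $(V,\searrow,\nearrow,\swarrow,\nwarrow,\cdot)$ by pulling back the corresponding \mysymbol algebra identity on $A$ along $T$, using Eqs.~\meqref{LDLOP1}--\meqref{LDLOP3} to translate compositions $T(u)\rhd T(v)$, $T(u)\lhd T(v)$, $[T(u),T(v)]$ back inside $T$, and using the representation axioms \meqref{ldlrep1}--\meqref{ldlrep10} to reorganize the resulting $\mathrm{End}_{\mathbb F}(V)$-valued expressions. This follows the same template as the proof that an $\mathcal O$-operator on a Lie algebra produces a pre-Lie algebra, and that an $\mathcal O$-operator on a pre-Lie algebra produces an L-dendriform algebra (compare Proposition~\mref{preldltoldl} and Theorem~\mref{oggs} in the present paper, and \cite[Theorem 4.14]{BLN}).

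First I would establish that the auxiliary operation $u\cdot v:=\rho(T(u))v$ makes $(V,\cdot)$ into a pre-Lie algebra. Indeed, \meqref{LDLOP3} gives $T\bigl(\rho(T(u))v-\rho(T(v))u\bigr)=[T(u),T(v)]$, so if we set $[u,v]_V:=u\cdot v-v\cdot u$ then $T([u,v]_V)=[T(u),T(v)]$. Combined with the fact that $(V;\rho)$ is a representation of the Lie algebra $(A,[-,-])$, a short computation shows $(u\cdot v)\cdot w-u\cdot(v\cdot w)=(v\cdot u)\cdot w-v\cdot(u\cdot w)$. In parallel, by \meqref{LDLOP1}--\meqref{LDLOP2} the maps $T$ intertwines the induced operations $u\rhd_V v:=u\searrow v+u\nearrow v$ and $u\lhd_V v:=u\swarrow v+u\nwarrow v$ with $\rhd,\lhd$ on $A$, i.e.\ $T(u\rhd_V v)=T(u)\rhd T(v)$, $T(u\lhd_V v)=T(u)\lhd T(v)$, and this is the main lever for everything that follows.

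Next I would verify the ``type-$(\lhd,[-,-])$'' identities \meqref{pldl1}--\meqref{pldl4}. For each of these, the LHS and RHS, after expanding the definitions of $\swarrow,\nwarrow,\cdot$ and using \meqref{ldlrep1}--\meqref{ldlrep3}, become expressions of the form $r_{\lhd}(\cdot)$ or $l_{\lhd}(\cdot)$ applied to $u$ or $v$ with argument an element of $A$; their difference reduces, via the $T$-intertwining property and \meqref{DPSPL1}--\meqref{DPSPL2} on $A$, to zero. The identities \meqref{pldl5}--\meqref{pldl6} involving $\vee,\wedge$ come similarly from \meqref{DPSPL3} paired with \meqref{ldlrep4}--\meqref{ldlrep5}. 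For \meqref{pldl7}--\meqref{pldl10}, each side, after expansion, is a sum of terms of the shape $l_*(T(x))r_*(T(y))u$ or $\rho(T(x))l_*(T(y))u$; applying the appropriate representation equation \meqref{ldlrep6}--\meqref{ldlrep9} converts such products of two structure operators into single structure operators whose argument involves a product in $A$, and this product can be rewritten using \meqref{LDLOP1}--\meqref{LDLOP3} in terms of an image of $T$, at which point the target axiom \meqref{DPSPL4} (or its equivalent) in $A$ applies termwise. Finally \meqref{pldl11} is treated the same way, invoking \meqref{ldlrep10} and \meqref{DPSPL5}.

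The main obstacle is not conceptual but combinatorial: each of \meqref{pldl7}--\meqref{pldl11} unfolds into a comparatively long sum of operator compositions, and I must keep careful track of which occurrences of $u,v,w$ lie in which slot so as to apply the correct representation axiom. Once the dictionary
\vsb
\begin{align*}
T(u\searrow v)+T(u\nearrow v)&=T(u)\rhd T(v),\\
T(u\swarrow v)+T(u\nwarrow v)&=T(u)\lhd T(v),\\
T(u\cdot v-v\cdot u)&=[T(u),T(v)]
\end{align*}
is in hand together with Lemma~\mref{dualrepdpspl}-style dualities, each of \meqref{pldl1}--\meqref{pldl11} is seen to be the image under a suitable evaluation of exactly one of the \mysymbol algebra axioms on $A$ modulo the representation axioms on $V$, which completes the verification.
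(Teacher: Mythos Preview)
Your proposal is correct and follows the same approach as the paper, whose proof is the single line ``It is straightforward to verify the axioms of a pre-\mysymbol algebra.'' One minor simplification: you do not actually need to invoke the \mysymbol axioms \meqref{DPSPL1}--\meqref{DPSPL5} on $A$ separately, since the representation identities \meqref{ldlrep1}--\meqref{ldlrep10} evaluated at $T(u),T(v)$ together with the $\mathcal O$-operator relations \meqref{LDLOP1}--\meqref{LDLOP3} already give each of \meqref{pldl1}--\meqref{pldl11} directly (and the reference to Lemma~\mref{dualrepdpspl} is not needed).
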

\begin{proof}
It is straightforward to verify the axioms of a pre-\mysymbol algebra.
\vsb
\end{proof}

\begin{pro}\mlabel{iogpfna}
 There is a compatible pre-\mysymbol algebra structure on a    \mysymbol algebra $(A, \rhd,\lhd,[-,-])$
if and only if there exists an invertible $\mathcal{O}$-operator
on $(A, \rhd,\lhd,[-,-])$.
\end{pro}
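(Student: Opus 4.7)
The plan is to follow the standard template that connects invertible $\mathcal{O}$-operators with compatible pre-structures, using Theorem~\mref{ofnagz} in one direction and Proposition~\mref{preldltoldl} in the other. Both directions should be essentially formal, with the main computational content deferred to those earlier results.

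For the ``only if'' direction, suppose $(A,\searrow,\nearrow,\swarrow,\nwarrow,\cdot)$ is a compatible pre-\mysymbol algebra structure on $(A,\rhd,\lhd,[-,-])$. By Proposition~\mref{preldltoldl}, the quintuple $(A; L_{\searrow}, R_{\nearrow}, L_{\swarrow}, R_{\nwarrow}, L_{\cdot})$ is a representation of the \mysymbol algebra $(A,\rhd,\lhd,[-,-])$. I then claim that $\mathrm{id}_A : A \to A$ is an invertible $\mathcal{O}$-operator on $(A,\rhd,\lhd,[-,-])$ associated to this representation. Indeed, by Eq.~\eqref{pLDL},
\begin{align*}
\mathrm{id}(x) \rhd \mathrm{id}(y) &= x\rhd y = x\searrow y + x\nearrow y = L_{\searrow}(\mathrm{id}(x))y + R_{\nearrow}(\mathrm{id}(y))x,
\end{align*}
and analogously for $\lhd$ (using $\swarrow,\nwarrow$) and for $[-,-]$ (using $\cdot$), so Eqs.~\eqref{LDLOP1}--\eqref{LDLOP3} hold.

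For the ``if'' direction, suppose $T: V \to A$ is an invertible $\mathcal{O}$-operator on $(A,\rhd,\lhd,[-,-])$ associated to a representation $(V;l_{\rhd},r_{\rhd},l_{\lhd},r_{\lhd},\rho)$. By Theorem~\mref{ofnagz}, the pullback operations on $V$ defined by $u\searrow v := l_{\rhd}(T(u))v$, $u\nearrow v := r_{\rhd}(T(v))u$, $u\swarrow v := l_{\lhd}(T(u))v$, $u\nwarrow v := r_{\lhd}(T(v))u$, and $u\cdot v := \rho(T(u))v$ give a pre-\mysymbol algebra on $V$. Since $T$ is a linear isomorphism, I transport this structure to $A$ by setting
\begin{align*}
x \searrow y &:= T\bigl(l_{\rhd}(x)T^{-1}(y)\bigr), \quad x \nearrow y := T\bigl(r_{\rhd}(y)T^{-1}(x)\bigr), \\
x \swarrow y &:= T\bigl(l_{\lhd}(x)T^{-1}(y)\bigr), \quad x \nwarrow y := T\bigl(r_{\lhd}(y)T^{-1}(x)\bigr), \\
x \cdot y &:= T\bigl(\rho(x)T^{-1}(y)\bigr),
\end{align*}
for all $x,y\in A$. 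Transport of structure along an isomorphism preserves the pre-\mysymbol axioms, so $(A,\searrow,\nearrow,\swarrow,\nwarrow,\cdot)$ is a pre-\mysymbol algebra.

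It remains to verify that this pre-\mysymbol algebra structure is \emph{compatible} with the given $(A,\rhd,\lhd,[-,-])$, that is, that the sub-adjacent \mysymbol algebra recovered via Eq.~\eqref{pLDL} coincides with $(A,\rhd,\lhd,[-,-])$. This is immediate from the $\mathcal{O}$-operator identities: for all $x,y\in A$,
\begin{align*}
x \searrow y + x \nearrow y &= T\bigl(l_{\rhd}(x)T^{-1}(y) + r_{\rhd}(y)T^{-1}(x)\bigr) \overset{\eqref{LDLOP1}}{=} x \rhd y,
\end{align*}
and analogously $x\swarrow y + x\nwarrow y = x\lhd y$ by Eq.~\eqref{LDLOP2} and $x\cdot y - y\cdot x = [x,y]$ by Eq.~\eqref{LDLOP3}. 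The only mildly subtle point, and the one I would check carefully, is the transport of structure step: verifying that the long list of pre-\mysymbol identities \eqref{pldl1}--\eqref{pldl11} survives conjugation by $T$. This is routine because each identity is multilinear in the operations and $T$ is a bijective intertwiner between the two realizations; no new computation beyond what is hidden in Theorem~\mref{ofnagz} is needed.
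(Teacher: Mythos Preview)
Your proof is correct and follows essentially the same approach as the paper: in one direction you use Proposition~\mref{preldltoldl} to exhibit $\mathrm{id}_A$ as an invertible $\mathcal{O}$-operator for the representation $(A; L_{\searrow}, R_{\nearrow}, L_{\swarrow}, R_{\nwarrow}, L_{\cdot})$, and in the other you transport the pre-\mysymbol structure from Theorem~\mref{ofnagz} along the isomorphism $T$. Your write-up is in fact slightly more explicit than the paper's, since you spell out the compatibility check via Eqs.~\eqref{LDLOP1}--\eqref{LDLOP3}, which the paper leaves as ``a direct verification.''
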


\begin{proof} Suppose that  $T: V \rightarrow A$  is an invertible  $\mathcal{O}$-operator on  $(A, \rhd,\lhd,[-,-])$ associated to a representation  $(V;l_{\rhd}, r_{\rhd},l_{\lhd},  r_{\lhd},
\rho)$. Define the binary operations $\searrow , \nearrow
,\swarrow,\nwarrow,\cdot$ respectively by
{\small
\begin{align*}
x \searrow  y &=T\left(l_{\rhd}(x)\left(T^{-1}(y)\right)\right), \quad x \nearrow  y =T\left(r_{\rhd}(y)\left(T^{-1}(x)\right)\right), \quad x \cdot y := T\left(\rho(x) T^{-1}(y)\right),\\
x \swarrow y &=T\left(l_{\lhd}(x)\left(T^{-1}(y)\right)\right),
\quad x \nwarrow y
=T\left(r_{\lhd}(y)\left(T^{-1}(x)\right)\right), \quad \forall x,
y \in A.
\end{align*}
}
Then applying Theorem \mref{ofnagz}, a direct verification shows that
$(A,\searrow , \nearrow ,\swarrow,\nwarrow,\cdot)$ is a compatible
pre-\mysymbol algebra structure on  $(A, \rhd,\lhd,[-,-])$.

 Conversely, let $(A,\searrow , \nearrow ,\swarrow,\nwarrow,\cdot)$ be a pre-\mysymbol algebra and  $(A, \rhd,\lhd,[-,-])$ be the sub-adjacent \mysymbol algebra.
 By Proposition \mref{preldltoldl}, the identity map $\mathrm{id}:  A \rightarrow A$ is an invertible $\mathcal{O}$-operator on  $(A, \rhd,\lhd,[-,-])$   associated to the representation
 $\left(A; L_{\searrow}, R_{\nearrow},L_{\swarrow}, R_{\nwarrow},L_{\cdot} \right)$.
\end{proof}

\begin{cor} \mlabel{pretoaldlcYBE}
Let   $(A ,\searrow , \nearrow ,\swarrow,\nwarrow,\cdot)$ be a
pre-\mysymbol algebra and $(A, \rhd,\lhd,[-,-])$ be the
sub-adjacent \mysymbol algebra. Let   $\left\{e_{1}, \ldots,
e_{n}\right\}$ be a basis of  $A$  and $\left\{e_{1}^{*}, \ldots,
e_{n}^{*}\right\}$ be the dual basis. Then \vsc
\begin{align}
r:=\sum_{i=1}^{n}\left(e_{i}^{*}  \otimes e_{i}- e_{i}\otimes e_{i}^{*} \right)
\end{align}
is an antisymmetric solution of the \PCYBE in the \mysymbol
algebra
\vsb
\begin{align}\label{bigldl}
\widehat{A}: = A \ltimes_{L_{\searrow}^*-R_{\nearrow}^*+L_{\swarrow}^*-R_{\nwarrow}^*,R_{\nearrow}^* ,R_{\nearrow}^*-L_{\swarrow}^*,-(R_{\nearrow}^*+R_{\nwarrow}^*), L_{\cdot}^*} A^{*}.
\end{align}
\vse \vsc
\end{cor}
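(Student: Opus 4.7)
The plan is to recognize this as an immediate application of Theorem \mref{oggs} to the identity map of $A$, viewed as an invertible $\mathcal{O}$-operator, exactly as was done in the converse direction of Proposition \mref{iogpfna}. I would carry this out in three short steps.

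First, by Proposition \mref{preldltoldl}, the quintuple $(A; L_{\searrow}, R_{\nearrow}, L_{\swarrow}, R_{\nwarrow}, L_{\cdot})$ is a representation of the sub-adjacent \mysymbol algebra $(A, \rhd, \lhd, [-,-])$. Moreover, I would observe that under $T = \mathrm{id}_A$ the defining identities \eqref{LDLOP1}--\eqref{LDLOP3} of an $\mathcal{O}$-operator collapse directly to Eq.~\eqref{pLDL}. Hence $\mathrm{id}_A$ is an invertible $\mathcal{O}$-operator on $(A, \rhd, \lhd, [-,-])$ associated to this representation; this is the content of Proposition~\mref{iogpfna}.

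Second, I would apply Theorem \mref{oggs} with $V = A$ and this representation. The ambient \mysymbol algebra produced by the theorem is the semidirect product of $A$ with $A^*$ built from the dual maps
\begin{align*}
l_{\rhd}^{*} - r_{\rhd}^{*} + l_{\lhd}^{*} - r_{\lhd}^{*},\quad r_{\rhd}^{*},\quad r_{\rhd}^{*} - l_{\lhd}^{*},\quad -(r_{\rhd}^{*} + r_{\lhd}^{*}),\quad \rho^{*},
\end{align*}
which, under the substitution $(l_{\rhd}, r_{\rhd}, l_{\lhd}, r_{\lhd}, \rho) = (L_{\searrow}, R_{\nearrow}, L_{\swarrow}, R_{\nwarrow}, L_{\cdot})$, becomes precisely the \mysymbol algebra $\widehat{A}$ defined in \eqref{bigldl}.

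Third, I would make explicit the tensor identification. Under $\mathrm{Hom}_{\mathbb{F}}(A, A) \cong A^{*} \otimes A \subseteq \widehat{A} \otimes \widehat{A}$, the identity map $\mathrm{id}_A$ corresponds to $T := \sum_{i=1}^{n} e_i^{*} \otimes e_i$. Therefore
\begin{align*}
T - \tau(T) \;=\; \sum_{i=1}^{n}\bigl(e_i^{*} \otimes e_i - e_i \otimes e_i^{*}\bigr) \;=\; r,
\end{align*}
and Theorem \mref{oggs} then yields that $r$ is an antisymmetric solution of the \PCYBE in $\widehat{A}$. I do not anticipate a substantive obstacle: the whole argument is a concatenation of Proposition \mref{iogpfna} and Theorem \mref{oggs}, with the only care being the bookkeeping check that the dual representation appearing in Theorem \mref{oggs} matches the structure of $\widehat{A}$ in \eqref{bigldl} and that $\mathrm{id}_A$ is identified with $\sum_i e_i^{*} \otimes e_i$.
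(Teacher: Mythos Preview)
Your proposal is correct and follows essentially the same route as the paper's own proof: invoke Proposition~\mref{iogpfna} to see that $\mathrm{id}_A$ is an invertible $\mathcal{O}$-operator associated to $(A; L_{\searrow}, R_{\nearrow}, L_{\swarrow}, R_{\nwarrow}, L_{\cdot})$, then apply Theorem~\mref{oggs}. Your version simply spells out the bookkeeping (the match between the dual representation and \eqref{bigldl}, and the tensor identification $\mathrm{id}_A \leftrightarrow \sum_i e_i^*\otimes e_i$) that the paper leaves implicit.
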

\begin{proof} By Proposition \mref{iogpfna}, the identity map $\mathrm{id} :  A \rightarrow A$  is an invertible $\mathcal{O}$-operator on
  $(A, \rhd,\lhd,$ $[-,-])$ associated to the representation   $\left(A; L_{\searrow}, R_{\nearrow},L_{\swarrow}, R_{\nwarrow},L_{\cdot} \right)$. Hence the conclusion follows from Theorem \mref{oggs}.
\end{proof}
\vsc
We end the paper by presenting an example to illustrate the above construction.
\vsb
\begin{ex} In Example \mref{ex:ldl}, we define a  Rota-Baxter operator $P$ of weight  zero on the \mysymbol
algebra $(\mathfrak{s l}(2, \mathbb{C}),\rhd,\lhd,[-,-])$ by
\vsb
\begin{align*}
P(e_1) = e_1, \quad P(e_2) = P(e_3) = 0.
\vsb
\end{align*}
Thus by Theorem \mref{ofnagz}, there is a pre-\mysymbol algebra
structure $(A,\!\searrow , \nearrow , \swarrow ,\nwarrow,\cdot)$
on the vector space $ A:= \mathfrak{s l}(2, \mathbb{C}) $  whose
nonzero products are given by
\vsb
{\small
\begin{align*}
e_1 \searrow e_2 &=   \frac{\mathrm{i} }{2}e_2 +   \frac{1}{2}e_3 , && e_1 \searrow e_3 =   -\frac{1 }{2}e_2 +   \frac{\mathrm{i}}{2}e_3,   && e_2 \nearrow e_1 =   e_3 ,& e_3 \nearrow e_1 =   - e_2 , \\
e_1\swarrow e_2 &=  - \frac{\mathrm{i} }{2}e_2 +   \frac{1}{2}e_3 , &&  e_1 \swarrow e_3 =   -\frac{1 }{2}e_2 -  \frac{\mathrm{i}}{2}e_3,   && \hspace{0.36cm} e_1  \cdot e_2   =   e_3 , & e_1  \cdot e_3 =   - e_2 , \\
 e_2 \nwarrow e_1 &=    \frac{\mathrm{i} }{2}e_2 -   \frac{1}{2}e_3 , &&  e_3 \nwarrow e_1 =     \frac{1 }{2}e_2 +  \frac{\mathrm{i}}{2}e_3.
\end{align*}
}
Then the nonzero products of the  sub-adjacent \mysymbol algebra
$(A,\rhd'\!,\lhd'\!,[-,-]')$ of $(A,\!\searrow$, $\nearrow$,
$\swarrow ,\nwarrow,\cdot)$ are given by
\vsb
{\small
\begin{align}
e_{1} \rhd'  e_{2}  &=  \frac{\mathrm{i}}{2} e_2 + \frac{1}{2}
e_3, &&
e_{1} \rhd'  e_{3}  = -\frac{1}{2} e_2 + \frac{\mathrm{i}}{2}  e_3, &&   e_{2} \rhd'  e_{1}  = e_3,  \quad  e_{3} \rhd'  e_{1}=-e_2, \label{exi}\\
e_{1} \lhd'  e_{2}  &= -\frac{\mathrm{i}}{2} e_2 + \frac{1}{2}  e_3, &&  e_{1} \lhd'  e_{3}  = -\frac{1}{2} e_2 - \frac{\mathrm{i}}{2}  e_3,  && \hspace{0.1cm}[e_{1} , e_{2} ]'   = e_3,  \hspace{0.52cm} [e_{1} , e_{3} ]' =
-e_2, \label{exii}\\
e_{2} \lhd'  e_{1} &=\frac{\mathrm{i}}{2} e_2 - \frac{1}{2}  e_3, &&
  e_{3} \lhd'  e_{1}=\frac{1}{2} e_2 + \frac{\mathrm{i}}{2}  e_3.  \label{exiii}
\end{align}
}
Let $\{e_1^*,e_2^*,e_3^*\}$ be the dual basis.
Then there is a larger \mysymbol algebra $\widehat{A}$ defined by Eq.~\eqref{bigldl}. Explicitly, the nonzero products are given by  Eqs.~\eqref{exi}--\eqref{exiii} and the following equations.\
\vsb
 \begin{align*}
e_{1} \rhd'  e_{2}^*  &=  \frac{\mathrm{i}}{2} e_2^*  + \frac{1}{2}
e_3^* , &&
e_{1} \rhd'  e_{3}^*   = -\frac{1}{2} e_2^*  + \frac{\mathrm{i}}{2}  e_3^* , &&   e_{2}^*  \rhd'  e_{1}  = e_3^* ,  \quad  e_{3}^*  \rhd'  e_{1}=-e_2^* , \\
e_{1} \lhd'  e_{2}^*  &= -\frac{\mathrm{i}}{2} e_2^* + \frac{1}{2}  e_3^*, &&  e_{1} \lhd'  e_{3}^*  = -\frac{1}{2} e_2^* - \frac{\mathrm{i}}{2}  e_3^*,  && [e_{1} , e_{2}^*]'   = e_3^*,  \hspace{0.45cm} [e_{1},e_{3}^*]' =-
e_2^*, \\
e_{2}^* \lhd'  e_{1} &=\frac{\mathrm{i}}{2} e_2^* - \frac{1}{2}  e_3^*, &&
  e_{3}^* \lhd'  e_{1}=\frac{1}{2} e_2^* + \frac{\mathrm{i}}{2}  e_3^*.
\end{align*}
Thus by Corollary \mref{pretoaldlcYBE}, $r=\sum\limits_{i=1}^{3}\left(e_{i}^{*} \otimes e_{i}  - e_{i} \otimes e_{i}^{*}\right)$ is an
 antisymmetric  solution of the \PCYBE in the \mysymbol algebra  $\widehat{A}$. By Corollary \mref{iartoldlbia}, there is a  \mysymbol bialgebra $ (\widehat{A}, \delta_{\rhd'},\delta_{\lhd'},\Delta')$ where $\delta_{\rhd'},\delta_{\lhd'},\Delta'$ are given by
\vsb
{\small \begin{align*}
\delta_{\rhd'}(e_1) &= 0, &&\delta_{\rhd'}(e_1^*)  = 0, \\
\delta_{\rhd'}(e_2) &= e_1^* \otimes \Big(\frac{\mathrm{i}}{2} e_2 - \frac{1}{2} e_3 \Big)- e_3 \otimes e_1^*  , &&\delta_{\rhd'}(e_2^*)  =  e_1^* \otimes \Big(\frac{\mathrm{i}}{2} e_2^* - \frac{1}{2} e_3^* \Big) - e_3^* \otimes e_1^* ,\\
\delta_{\rhd'}(e_3) &=  e_2 \otimes e_1^* + e_1^* \otimes \Big(\frac{1}{2} e_2 + \frac{\mathrm{i}}{2} e_3 \Big), &&\delta_{\rhd'}(e_3^*)  =   e_2^* \otimes e_1^* + e_1^* \otimes \Big(\frac{1}{2} e_2^* + \frac{\mathrm{i}}{2} e_3^* \Big), \\
\delta_{\lhd'}(e_1) &= 0, &&\delta_{\lhd'}(e_1^*)  = 0,
\end{align*}
\vse
\begin{align*}
\delta_{\lhd'}(e_2) &= e_1^* \otimes \Big(\frac{\mathrm{i}}{2} e_2  + \frac{1}{2} e_3  \Big)-\Big(\frac{\mathrm{i}}{2} e_2  + \frac{1}{2} e_3  \Big) \otimes e_1^*  , \\
\delta_{\lhd'}(e_2^*)  &=e_1^* \otimes \Big(\frac{\mathrm{i}}{2} e_2^*  + \frac{1}{2} e_3^*  \Big)- \Big(\frac{\mathrm{i}}{2} e_2^*  + \frac{1}{2} e_3^*  \Big) \otimes e_1^* ,  \\
\delta_{\lhd'}(e_3) &= \Big(  \frac{1}{2} e_2  - \frac{\mathrm{i}}{2} e_3   \Big) \otimes e_1^* + e_1^* \otimes \Big(    \frac{\mathrm{i}}{2} e_3  -\frac{1}{2} e_2  \Big),\\
\delta_{\lhd'}(e_3^*)  &=\Big(  \frac{1}{2} e_2^* - \frac{\mathrm{i}}{2} e_3^*     \Big) \otimes e_1^* + e_1^* \otimes \Big(     \frac{\mathrm{i}}{2} e_3^* -\frac{1}{2} e_2^*  \Big),
\end{align*}
\vse
\begin{align*}
\Delta'(e_1) &= 0, && \Delta'(e_1^*) =0,\\
\Delta'(e_2) &=  e_3 \otimes e_1^* -  e_1^* \otimes e_3, && \Delta'(e_2^*) =  e_3^* \otimes e_1^* -  e_1^* \otimes e_3^*,\\
\Delta'(e_3) &=   e_1^* \otimes e_2-e_2 \otimes e_1^*, && \Delta'(e_3^*) =   e_1^* \otimes e_2^*-e_2^* \otimes e_1^*.
 \end{align*}
}
\end{ex}
\vsb

 \noindent
 {\bf Acknowledgements.} This work is supported by NSFC  (11931009, 12271265, 12261131498, 12326319), Fundamental Research Funds for the Central Universities and Nankai Zhide Foundation.

\smallskip

\noindent
{\bf Declaration of interests. } The authors have no conflicts of interest to disclose.

\smallskip

\noindent
{\bf Data availability. } No new data were created or analyzed in this study.

\vspace{-.2cm}

\end{document}